\def\nc{\newcommand}
\def\om{\omega}
 \def\Om{\Omega}
\nc\pa{\partial}
\nc\CC{\mathbb{C}}
\nc\RR{\mathbb{R}}
\nc\QQ{\mathbb{Q}}
\nc\ZZ{\mathbb{Z}}
\nc\NN{\mathbb{N}}
\nc\m[1]{\left| #1\right|}
\nc\norm[1]{\left\|#1\right\|}
\newtheorem{theorem}{Theorem}[section]
\newtheorem{lemma}[theorem]{Lemma}
\newtheorem{corollary}[theorem]{Corollary}
\newtheorem{proposition}[theorem]{Proposition}
\newtheorem{definition}[theorem]{Definition}% Use {\rm ...}
\newtheorem{remark}[theorem]{Remark}        % Use {\rm ...}
\numberwithin{equation}{section}
\begin{document}

\title[Muckenhoupt-Wheeden type  bounds and applications]
{Nonlinear Muckenhoupt-Wheeden type  bounds on Reifenberg flat domains, with applications to quasilinear Riccati type equations}

\author[Nguyen Cong Phuc]
{Nguyen Cong Phuc$^{*}$}
\address{Department of Mathematics,
Louisiana State University,
303 Lockett Hall, Baton Rouge, LA 70803, USA.}
\email{pcnguyen@math.lsu.edu}

\thanks{2010 Mathematics Subject Classification: primary 35J60, 35J61, 35J62; secondary  35J75, 42B37.}

\thanks{$^{*}$Supported in part by NSF grant DMS-0901083.}

\begin{abstract}
A weighted norm inequality of Muckenhoupt-Wheeden type is obtained for gradients of solutions to a class of quasilinear equations with measure data on Reifenberg flat domains.
This essentially leads to a resolution of an existence  problem for quasilinear 
Riccati type equations with a  gradient source term of arbitrary power law growth.
\end{abstract}

\maketitle

\section{Introduction}\label{Introduction}

In this paper we first obtain a weighted norm inequality of Muckenhoupt-Wheeden type  for gradients of solution to the Dirichlet problem with measure data
\begin{eqnarray}\label{basicpde}
\left\{ \begin{array}{rcl}
 -\text{div}\,\mathcal{A}(x, \nabla u)&=& \mu \quad \text{in} ~\Omega, \\
u&=&0  \quad \text{on}~ \partial \Omega.
\end{array}\right.
\end{eqnarray} 

Given a finite measure $\mu$ on a bounded domain $\Om\subset\RR^n$, $n\geq 2$, under certain natural conditions on $\mathcal{A}$ and $\Om$, we  show in particular  that solutions $u$ of 
\eqref{basicpde} satisfy the bound
\begin{equation}\label{MWbound}
\int_{\Om} |\nabla u|^q w dx \leq C \int_{\Om} {\rm\bf M}_1(\mu)^{\frac{q}{p-1}} w dx
\end{equation}
for any $q\in (0, \infty)$ and any weight $w$ in the $A_\infty$ class. Here $p\in (2-1/n, n]$ is  the natural growth of $\mathcal{A}$ (modeled after the $p$-Laplacian), and ${\rm\bf M}_1$ is the first order fractional maximal function defined for each locally finite  measure $\nu$ by
$${\rm\bf M}_1(\nu)(x)= \sup_{r>0} \frac{r\, |\nu|(B_r(x))}{|B_r(x)|},\qquad x\in \RR^n.$$

Inequality \eqref{MWbound} is then applied to obtain a sharp existence result for the Riccati type equation
\begin{eqnarray*}
\left\{ \begin{array}{rcl}
 -{\rm div}\,\mathcal{A}(x, \nabla u)&=& |\nabla u|^q + \mu \quad \text{in} ~\Om, \\
u&=&0  \quad \text{on}~ \partial \Omega,
\end{array}\right.
\end{eqnarray*} 
with a source term growth $q>p-1$. This in particular confirms a conjecture of Igor E. Verbitsky (personal communication), which has also been recently stated as an open problem  in \cite{VHV},
pages 13--14.

Specifically, the nonlinearity   $\mathcal{A}: \RR^n\times\RR^n \rightarrow \RR^n$ in \eqref{basicpde} is a Carath\'edory  function, i.e., $\mathcal{A}(x,\xi)$ is measurable in $x$ for every $\xi$ and continuous in 
$\xi$ for a.e. $x$. Moreover,  for a.e. $x$, $\mathcal{A}(x,\xi)$ is differentiable in $\xi$ away from the origin. Let  $\mathcal{A}_{\xi}(x,\xi)$ denote 
its  Jacobian matrix for $\xi\in\RR^n\setminus\{0\}$.
For our purpose, we also assume that $\mathcal{A}$ satisfies the following growth and ellipticity conditions: for some $2-1/n<p\leq n$ there holds

\begin{equation}\label{sublinear}
|\mathcal{A}(x,\xi)| \leq\beta\m{\xi}^{p-1},
\end{equation}
and 
\begin{equation}\label{ellipticity}
\langle \mathcal{A}_{\xi}(x,\xi)\lambda, \lambda\rangle \geq \alpha  |\xi|^{p-2} |\lambda|^{2}, \quad |\mathcal{A}_{\xi}(x,\xi)| \leq\beta\m{\xi}^{p-2}
\end{equation}
for every $(\lambda,\xi)\in\RR^n \times\RR^n\setminus\{(0,0)\}$ and a.e. $x \in \RR^n$. Here $\alpha$ and $\beta$ are positive structural constants.

Note that \eqref{sublinear} and the Carath\'edory property imply that $\mathcal{A}(x,0)=0$ for a.e. $x\in \RR^n$, whereas
the first condition in \eqref{ellipticity} implies the following strict monotonicity condition:
\begin{equation}\label{monotone}
  \langle\mathcal{A}(x,\xi)-\mathcal{A}(x,\eta),\xi-\eta \rangle\geq
 c(p,\alpha)(|\xi|^2+|\eta|^2)^{\frac{p-2}{2}}|\xi-\eta|^2 
\end{equation}
for any  $(\xi, \eta)\in \RR^n \times \RR^n\setminus (0,0)$ and for almost every $x \in\RR^n$ (see, e.g., \cite{Tol}). 

For the purpose of this paper we also require that the nonlinearity $\mathcal{A}$ satisfy a smallness condition of BMO type in the $x$-variable.
We call such a condition the $(\delta, R_0)$-BMO condition.

\begin{definition} \label{bmodefinition} %\label{BMOdef}
 We say that $\mathcal{A}({x, \xi})$ satisfies a $(\delta, R_0)$-BMO condition for some $\delta, R_0>0$ with exponent $s>0$, if
\begin{equation*}
[\mathcal{A}]^{R_0}_{ s}:=\sup_{y\in\RR^n, \, 0<r\leq R_0 }  \left(\fint_{B_{r}(y)}\Upsilon(\mathcal{A},
B_{r}(y))(x)^{s}dx\right)^{\frac{1}{s}} \leq \delta,
\end{equation*}
where 
\[
\Upsilon(\mathcal{A}, B_r(y)) (x) := \sup_{\xi \in \mathbb{R}^{n}\setminus \{0\}} \frac{|\mathcal{A}({x, \xi}) - 
\overline{\mathcal{A}}_{B_r(y)}({\xi})|}{|\xi|^{p-1}}
\]
with $\overline{\mathcal{A}}_{B_r(y)}(\xi)$ denoting the average of $\mathcal{A}(\cdot, \xi)$ over the ball $B_r(y)$, i.e.,
\begin{equation*}
\overline{\mathcal{A}}_{B_r(y)}(\xi) := \fint_{B_r(y)}\mathcal{A}(x, \xi)dx=\frac{1}{|B_r(y)|}\int_{B_r(y)}\mathcal{A}(x, \xi)dx.
\end{equation*}
\end{definition}

A typical example of such a nonlinearity $\mathcal{A}$ is given by $\mathcal{A}(x, \xi)=|\xi|^{p-2}\xi$ which gives rise to the standard $p$-Laplacian $\Delta_p u={\rm div}(|\nabla u|^{p-2}\nabla u)$. 

In the case  $p=2$, the above $(\delta, R_0)$-BMO condition 
was introduced in \cite{BW2}, whereas  such a condition for  general $p\in(1,\infty)$  appears  in the recent paper \cite{Ph3}.
We remark that the  $(\delta, R_0)$-BMO condition allows $\mathcal{A}(x, \xi)$ has discontinuity in $x$ and it can be used as an 
appropriate substitute for the Sarason \cite{Sa} VMO  condition   (see, e.g., \cite{BW1, BW2,  IKM, Mil, Se, VMR}).

The domains considered in this paper may be nonsmooth but should satisfy some flatness condition. Essentially, at each
boundary point and every scale, we require the boundary of the domain  be between two hyperplanes separated by a distance that
depends on the scale.    The following  defines the relevant geometry  precisely.
\begin{definition}
 Given $\delta\in (0, 1)$ and $R_0>0$, we say that $\Omega$ is a $(\delta, R_0)$-Reifenberg flat domain if for every $x\in \partial \Omega$
 and every $r\in (0, R_0]$, there exists a
 system of coordinates $\{ y_{1}, y_{2}, \dots,y_{n}\}$,
 which may depend on $r$ and $x$, so that  in this coordinate system $x=0$ and that
\[
B_{r}(0)\cap \{y_{n}> \delta r \} \subset B_{r}(0)\cap \Omega \subset B_{r}(0)\cap \{y_{n} > -\delta r\}.
\]
\end{definition}
For more on Reifenberg flat domains and their applications, we refer to the papers \cite{HM, Jon, KT1, KT2, Rei, Tor}.
We remark that Reifenberg flat domains  include $\mathcal{C}^1$ domains and  Lipschitz domains with sufficiently small Lipschitz constants (see \cite{Tor}).  Moreover, they also include certain domains with fractal boundaries and thus provide a wide range of applications.

We now recall the definition of  weighted Lorentz spaces. 
For a nonnegative locally integrable function $w$, called {\em a weight function},  the weighted Lorentz  space $L^{s,\, t}_{w}(\Om)$ with $0< s<\infty$, $0<t\leq\infty$, is the set of
measurable functions $g$ on $\Omega$ such that
\[
\|g\|_{L^{s,\, t}_{w}(\Omega)} := \left[s \int_{0}^{\infty}(\alpha^s w(\{x\in\Om: |g(x)|>\alpha\}))^{\frac{t}{s}} \frac{d\alpha}
{\alpha}\right]^{\frac{1}{t}} <+ \infty
\]
when $t\not=\infty$; for $t=\infty$ the space $L^{s,\, \infty}_{w}(\Om)$ is set to be the  Marcinkiewicz space with quasinorm
$$\|g\|_{L^{s,\, \infty}_{w}(\Omega)}:=\sup_{\alpha >0} \alpha w(\{x\in \Om: |g(x)|>\alpha\})^{\frac{1}{s}}.$$
Here for a measurable set $E\subset \RR^n$ we write  $w(E)=\int_{E} w(x) dx$.

It is easy to see that when $t=s$ the weighted Lorentz space
$L^{s,\, s}_{w}(\Om)$ is nothing but the weighted Lebesgue space $L^{s}_{w}(\Om)$, which is equivalently defined as 
\[
g\in L^{s}_{w}(\Omega) \Longleftrightarrow \norm{g}_{L^{s}_{w}(\Omega)}:= \left(\int_{\Omega}|g(x)|^s w(x)dx\right)^{\frac{1}{s}} < +\infty.
\] As usual, when $w\equiv 1$ we  simply write
$L^{s,\, t}(\Om)$ instead of $L^{s,\, t}_{w}(\Om)$.

The class of weights considered in this paper is the class of $A_\infty$ weights. Several equivalent  definitions of this class of weights can be given. For our purpose we choose the following one.

\begin{definition}\label{inversedoubling}
 We say that a weight  $w$  is an $A_{\infty}$ weight if there are two positive constants $A$ and   $\nu$ such that 
\begin{equation*}
w(E)\leq A\left ( \frac{|E|}{|B|}\right)^{\nu}w(B).
\end{equation*}
for every ball $B \subset \mathbb{R}^{n}$ and every measurable subsets $E$ of $B$. The pair $(A, \nu)$ is called the $A_\infty$ constants of $w$  
and is denoted by $[w]_{A_\infty}$. 
\end{definition}

We are now ready to state the first main result of the paper.

\begin{theorem}\label{main}
Let $2-1/n<p\leq n$, $0<q<\infty$, $0<t\leq \infty$, and let $w$ be an $ A_{\infty}$ weight.  Suppose that $\mathcal{A}$ satisfies \eqref{sublinear}-\eqref{ellipticity}. 
Then there exist constants $s=s(n, p, \alpha, \beta)>1$ and $\delta=\delta(n, p, \alpha, \beta, q, t, [w]_{A_\infty})\in (0, 1)$ such that the following holds. 
If  $[\mathcal{A}]_{s}^{R_0}\leq \delta$ and $\Om$ is $(\delta, R_0)$-Reifenberg flat for some $R_0>0$, then for any renormalized solution $u$ to 
the boundary value problem  \eqref{basicpde} we have 

\begin{equation}\label{B0bound}
\norm{\nabla u}_{L^{q, \, t}_{w}(\Omega)}\leq C \norm{{\rm\bf M}_1(\mu)^{\frac{1}{p-1}}}_{L^{q, \, t}_{w}(\Omega)}.
\end{equation}
Here  $C$ depends only on $n, p, \alpha, \beta, q, t, [w]_{A_\infty}$, and ${\rm diam}(\Om)/R_0$.
\end{theorem}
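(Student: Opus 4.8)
\emph{Strategy and reductions.} The plan is to prove \eqref{B0bound} by a good-$\lambda$ (level-set) argument, combining interior and boundary comparison estimates for \eqref{basicpde} with the self-improving properties of $A_\infty$ weights. First I would reduce, by a stability argument for renormalized solutions, to the case in which $\mu$ is a smooth bounded measure and $u$ is the (unique) energy solution; then $\nabla u$ lies a priori in $L^{q,t}_w(\Omega)$, which is what makes the forthcoming absorption legitimate, and the general estimate is recovered by letting the approximating data converge. I would also replace $\Omega$ by a ball $B_R$ with $R\approx{\rm diam}(\Omega)$, extend $\nabla u$ by zero outside $\Omega$, and work throughout with a truncated Hardy--Littlewood maximal function $\mathcal{M}^{R}_\Omega$ of $|\nabla u|^p$. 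The key is that every constant must be independent of the smoothing parameters.

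\emph{Comparison estimates and the reference problem.} Fix a ball $B_{2r}(x_0)$ with $2r\le R_0$. In the interior case $B_{2r}(x_0)\subset\Omega$, compare $u$ with the solution $v$ of $-\mathrm{div}\,\overline{\mathcal{A}}_{B_{2r}(x_0)}(\nabla v)=0$ in $B_{2r}(x_0)$, $v-u\in W^{1,p}_0(B_{2r}(x_0))$; in the boundary case, pass to the Reifenberg coordinates at the nearest boundary point, flatten, and compare on $B_{2r}(x_0)\cap\Omega$. Using \eqref{sublinear}--\eqref{monotone}, the $(\delta,R_0)$-BMO hypothesis of Definition~\ref{bmodefinition}, and $(\delta,R_0)$-Reifenberg flatness, the standard comparison lemmas --- with the usual degenerate/singular dichotomy to handle $2-1/n<p<2$, and with the exponent $s>1$ in $[\mathcal{A}]^{R_0}_s$ chosen just above the self-improving higher-integrability exponent of $\nabla u$ and $\nabla v$ --- would give an estimate of the form
\[
\fint_{B_r(x_0)}|\nabla u-\nabla v|\,dx\ \le\ C\,\omega(\delta)\Big(\fint_{B_{2r}(x_0)}|\nabla u|^p\,dx\Big)^{1/p}\ +\ C\,\Big[\tfrac{|\mu|(B_{2r}(x_0))}{r^{\,n-1}}\Big]^{\frac{1}{p-1}}(1+\cdots),
\]
with $\omega(\delta)\to0$ as $\delta\to0$. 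The crucial point is that the measure enters only through $r^{1-n}|\mu|(B_{2r}(x_0))\le{\rm\bf M}_1(\mu)(x_0)$ --- this is precisely what yields the fractional maximal function, rather than a Wolff or Riesz potential, on the right of \eqref{B0bound}. On the reference side I would invoke the known interior and boundary $C^{1,\beta_0}$ estimates for the homogeneous constant-coefficient operator on Reifenberg flat domains to get $\|\nabla v\|_{L^\infty(B_{r}(x_0))}\le C\big(\fint_{B_{2r}(x_0)}|\nabla u|^p\,dx\big)^{1/p}$.

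\emph{Good-$\lambda$ inequality, $A_\infty$, and summation.} Feeding the previous step into a Vitali/Calder\'on--Zygmund covering argument of Caffarelli--Peral type, I would obtain: for every $\varepsilon>0$ there exist $\delta=\delta(\varepsilon,n,p,\alpha,\beta)\in(0,1)$ and a structural constant $\Lambda>1$ such that, whenever $[\mathcal{A}]^{R_0}_s\le\delta$ and $\Omega$ is $(\delta,R_0)$-Reifenberg flat,
\[
\big|\{\mathcal{M}^{R}_\Omega(|\nabla u|^p)>\Lambda^p\lambda\}\cap\{[{\rm\bf M}_1(\mu)]^{\frac{p}{p-1}}\le\varepsilon^p\lambda\}\big|\ \le\ C\,\varepsilon^{\,n}\,\big|\{\mathcal{M}^{R}_\Omega(|\nabla u|^p)>\lambda\}\big|
\]
for all $\lambda>0$. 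By Definition~\ref{inversedoubling} this passes to $w$ with $C\varepsilon^{n}$ replaced by $C([w]_{A_\infty})\,\varepsilon^{\,n\nu}$. I would then multiply by the appropriate power of $\lambda$ and integrate in $\lambda$ (take the supremum when $t=\infty$) to form the weighted Lorentz quasinorms, and choose $\varepsilon$ small --- depending on $q$, $t$, $[w]_{A_\infty}$ --- so that $C([w]_{A_\infty})\,\varepsilon^{n\nu}\Lambda^{q}<\tfrac12$; this absorbs the left-hand term and leaves $\|\mathcal{M}^{R}_\Omega(|\nabla u|^p)^{1/p}\|_{L^{q,t}_w(\Omega)}\le C\,\|{\rm\bf M}_1(\mu)^{1/(p-1)}\|_{L^{q,t}_w(\Omega)}$, up to a contribution from the top scale $r\approx R_0$ that is estimated directly (this is where the dependence on ${\rm diam}(\Omega)/R_0$ enters). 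Since $|\nabla u|\le\mathcal{M}^{R}_\Omega(|\nabla u|^p)^{1/p}$ a.e. by Lebesgue differentiation, \eqref{B0bound} follows, and removing the approximation of the first step finishes the proof.

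\emph{Main obstacle.} I expect the real work to lie in the comparison step: the boundary comparison estimate on $(\delta,R_0)$-Reifenberg flat domains, together with the global boundary gradient bound for the constant-coefficient reference equation, needs the delicate geometry of the Reifenberg coordinates and the iteration that improves flatness at finer scales; one must also keep the measure term controlled purely by $r^{1-n}|\mu|(B_{2r})$ \emph{uniformly} over the whole singular range $2-1/n<p<2$ (this uniformity is the heart of the Muckenhoupt--Wheeden improvement over potential bounds); and one must set up the a priori regularity bootstrap carefully enough that the absorption argument is justified for renormalized solutions.
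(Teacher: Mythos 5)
Your overall architecture (interior/boundary comparison estimates, a covering argument with $A_\infty$, summation over levels) is the right family of ideas, but two steps as you describe them would fail. First, the boundary reference problem: there is no boundary $C^{1,\beta_0}$ or Lipschitz-up-to-the-boundary estimate for the homogeneous constant-coefficient operator on a $(\delta,R_0)$-Reifenberg flat domain, and such a domain cannot be ``flattened'' by a bi-Lipschitz change of variables (its boundary may be fractal); indeed the paper explicitly notes that $\norm{\nabla v}_{L^\infty}$ up to $\partial\Om$ may be unbounded for the solution $v$ of the averaged equation on $\Om_\rho(0)$. The paper's substitute is a compactness/perturbation argument (Lemma \ref{comparisonboundary0}, Theorem \ref{compareVLinfty}): one approximates $v$ in $L^p$, and then in the gradient on a smaller ball, by a solution $V$ of the half-ball problem \eqref{perturbedpdeboundary}, for which Lieberman's estimate (Lemma \ref{w1infty}) gives the Lipschitz bound up to the flat portion $T_\rho$; the $L^\infty$ bound is then inherited by $V$, not claimed for $v$. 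This idea is absent from your sketch and is the heart of the boundary case. Second, an exponent mismatch: the comparison with measure data is available only below the natural exponent (Lemma \ref{DM} controls $\fint|\nabla u-\nabla w|\,dx$, not $\fint|\nabla u-\nabla w|^p dx$, by $\bigl[|\mu|(B)/r^{n-1}\bigr]^{1/(p-1)}$ plus the singular-range correction), and for a renormalized solution $\nabla u\notin L^p_{\rm loc}$ in general, so $\mathcal{M}(|\nabla u|^p)$ is not a usable object. Your good-$\lambda$ inequality must therefore be run on ${\rm\bf M}(|\nabla u|)$, with the $L^\infty$ bound for the reference gradient reduced to first-power averages of $\nabla u$ via Gehring/reverse H\"older for the homogeneous comparison function (Lemmas \ref{RVH}, \ref{RVHbdry}); as written, your chain (an $L^1$ comparison on the left, $p$-averages of $\nabla u$ on the right, and a level-set estimate for $\mathcal{M}(|\nabla u|^p)$) does not close.

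There is also a gap in the endgame. Since \eqref{B0bound} carries no additive constant on the right, the leftover term your scheme produces (the ``top scale'' contribution, and in the paper the term $N\,w(\Om)^{1/q}$ coming from the normalization $u_N=u/N$ needed to start the level-set iteration) must be dominated by $\norm{{\rm\bf M}_1(\mu)^{1/(p-1)}}_{L^{q,t}_w(\Om)}$ itself. The paper does this by the global $L^1$ gradient estimate for measure-data problems, $\norm{\nabla u}_{L^1(\Om)}\leq C|\Om|^{1-\frac{n-1}{n(p-1)}}|\mu|(\Om)^{1/(p-1)}$ (this is where $p>2-1/n$ enters), which yields the pointwise bound $N\leq C\,{\rm\bf M}_1(\mu)(x)^{1/(p-1)}$ for every $x\in\Om$ and hence absorbs the leftover into the right-hand side; your sketch only says this term is ``estimated directly.'' Relatedly, your absorption step requires a priori finiteness of the weighted Lorentz quasinorm of the maximal function, which is not automatic even for smooth data and would need a separate global integrability theory; the paper's route (fixed-level density estimate, Proposition \ref{Byun-Wang-int-bdry}, iterated through the covering lemma into Theorem \ref{technicallemma}, then summed) avoids any absorption and works directly with truncations of renormalized solutions, so no smoothing/limiting argument for $u$ is needed.
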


Inequality \eqref{B0bound} can be viewed as a nonlinear version of a classical result due  to Muckenhoupt and Wheeden \cite{MW} 
regarding the weighted norm equivalence of Riesz potentials and the corresponding fractional maximal functions.

We remark that from the proof of this theorem one can in fact  replace the norm  $\norm{\nabla u}_{L^{q, \, t}_{w}(\Omega)}$ 
in \eqref{B0bound} with   $\norm{{\rm\bf M}(|\nabla u|)}_{L^{q, \, t}_{w}(\Omega)}$ even when $q\leq 1$.
Hereafter, ${\rm\bf M}$ denotes the Hardy-Littlewood maximal function defined for each $f\in L^{1}_{\rm loc}(\mathbb{R}^{n})$ by
\[
{\rm\bf M}f(x) = \sup_{r> 0} \fint_{B_{r}(x)} |f(y)|dy, \qquad x\in \RR^n.
\]

\begin{remark}\label{notionofsol}
{\rm We use the notion of renormalized solutions to address equation \eqref{basicpde} in Theorem \ref{main}. Several equivalent definitions of  
renormalized solutions were given in \cite{DMOP}, two of which will be recalled later in Section \ref{app}. In fact, to obtain \eqref{B0bound}, it is enough to use 
the following  milder notion of solution. For each integer $k>0$ the truncation 
$$T_{k}(u):=\max\{-k, \min\{k,u\}\}$$ belongs to $W_0^{1,\, p}(\Om)$ and satisfies
$$-{\rm div}\,\mathcal{A}(x, \nabla T_k(u))= \mu_k$$
in the sense of distributions in $\Om$ for a finite measure $\mu_k$ in $\Om$. Moreover, if we extend both $\mu$ and $\mu_k$ by zero to $\RR^n\setminus\Om$ then
$|\mu_k|$  converges  to $|\mu|$ weakly as measures in $\RR^n$.  It is known that renormalized solutions satisfy 
these properties (see Remark \ref{quasi}).
}\end{remark}

We next state the second main result of the paper. Here we need the notion of capacity
associated to the Sobolev space $W^{1,\, s}(\RR^n)$,  $1<s<+\infty$.
For a compact set $K\subset\RR^n$,  we  define
\begin{equation*}
{\rm Cap}_{1, \, s}(K)=\inf\Big\{\int_{\RR^n}(|\nabla \varphi|^s +\varphi^s) dx: \varphi\in C^\infty_0(\RR^n),
\varphi\geq \chi_K \Big\},
\end{equation*}
where $\chi_{K}$ is the characteristic function of the set $K$.

\begin{theorem}\label{main-Ric}
Let $2-1/n<p\leq n$ and  $q>p-1$.  Assume  that $\mathcal{A}$ satisfies \eqref{sublinear}-\eqref{ellipticity}. 
Then there exist constants $s=s(n, p, \alpha, \beta)>1$ and $\delta=\delta(n, p, \alpha, \beta, q)\in (0, 1)$ such that the following holds. 
Suppose that  $[\mathcal{A}]_{s}^{R_0}\leq \delta$ and $\Om$ is $(\delta, R_0)$-Reifenberg flat for some $R_0>0$. Then there exists a constant 
$c_0=c_0(n, p, \alpha, \beta, q, {\rm diam}(\Om), {\rm diam}(\Om)/R_0)$ such that if $\om$ is a finite signed measure in $\Om$ with 
\begin{equation}\label{capcondi} 
|\om|(K\cap\Om) \leq c_0\, {\rm Cap}_{1,\, \frac{q}{q-p+1}}(K)
\end{equation}
for all compact sets $K\subset\RR^n$, then there exists a renormalized solution $u\in W_0^{1, \, q}(\Om)$ to the Riccati type equation
\begin{eqnarray}\label{Riccati}
\left\{ \begin{array}{rcl}
 -{\rm div}\,\mathcal{A}(x, \nabla u)&=& |\nabla u|^q + \om \quad \text{in} ~\Omega, \\
u&=&0  \quad \text{on}~ \partial \Omega,
\end{array}\right.
\end{eqnarray} 
such that 
\begin{equation*} 
\int_{K\cap\Om} |\nabla u|^q dx \leq C\, {\rm Cap}_{1,\, \frac{q}{q-p+1}}(K)
\end{equation*}
for all compact sets $K\subset\RR^n$, and for a constant $C>0$ depending only on $n, p, \alpha, \beta, q, {\rm diam}(\Om)$ and ${\rm diam}(\Om)/R_0$.
\end{theorem}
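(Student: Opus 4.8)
The plan is to combine the a priori gradient bound of Theorem~\ref{main} with the capacitary strong‑type inequalities for the fractional maximal function $\mathbf{M}_1$, inside an approximation scheme whose convergence is forced by smallness of $c_0$. The algebraic key is the exponent identity
\[
s':=\Big(\tfrac{q}{q-p+1}\Big)'=\frac{q}{p-1},
\]
so that $\mathbf{M}_1(\nu)^{q/(p-1)}=\mathbf{M}_1(\nu)^{s'}$ with $s=\tfrac{q}{q-p+1}$; since $q>p-1$ we have $s'>1$, which is exactly what makes the nonlinear scheme subcritical.

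First I would set up the iteration. Let $u_0$ solve $-{\rm div}\,\mathcal{A}(x,\nabla u_0)=\om$ in $\Om$ with $u_0=0$ on $\partial\Om$ in the renormalized sense (which exists by \cite{DMOP}), and, having constructed $u_j\in W_0^{1,q}(\Om)$, let $u_{j+1}$ be a renormalized solution of
\[
-{\rm div}\,\mathcal{A}(x,\nabla u_{j+1})=|\nabla u_j|^q+\om\ \ \text{in }\Om,\qquad u_{j+1}=0\ \ \text{on }\partial\Om,
\]
which exists since $|\nabla u_j|^q\,dx+\om$ is a finite measure. The core is a $j$‑uniform estimate: for $c_0$ small there is $C_0$ so that $|\om|(K\cap\Om)\le c_0\,{\rm Cap}_{1,s}(K)$ for all compact $K$ implies
\[
\int_{K\cap\Om}|\nabla u_j|^q\,dx\le C_0\,{\rm Cap}_{1,s}(K)\qquad\text{for all compact }K,\ \text{all }j.
\]
I would prove this by induction on $j$. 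If it holds for $u_j$ with constant $a_j$, then $\nu_j:=|\nabla u_j|^q\,dx+|\om|$ satisfies $\nu_j(K\cap\Om)\le(a_j+c_0)\,{\rm Cap}_{1,s}(K)$, and by the Maz'ya--Verbitsky capacitary inequalities (applied to the Riesz potential $\mathbf{I}_1\ge c\,\mathbf{M}_1$, using that $\nu_j$ has compact support in a fixed neighbourhood of $\overline\Om$) the measure $\mathbf{M}_1(\nu_j)^{s'}\,dx$ obeys the same condition with constant $\le C(a_j+c_0)^{s'}$. Feeding this into a localized version of Theorem~\ref{main} for $u_{j+1}$ — which its proof delivers at the scale of balls $B\cap\Om$, with $w\equiv1$, $t=q$ and Lorentz exponent $q$ — together with a covering argument over $K$, yields $a_{j+1}\le C(a_j+c_0)^{s'}$. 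Because $s'>1$, for $c_0$ small the map $a\mapsto C(a+c_0)^{s'}$ leaves a small interval $[0,M]$ invariant, so the induction closes with $C_0=M$; taking $K\supset\overline\Om$ and using ${\rm Cap}_{1,s}(\overline\Om)<\infty$ also gives a uniform bound for $\|\nabla u_j\|_{L^q(\Om)}$, hence $u_j\in W_0^{1,q}(\Om)$.

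With the uniform estimate established, I would pass to the limit through the stability theory for renormalized solutions. The compactness of renormalized solutions with equibounded measure data gives, on a subsequence, $u_j\to u$ and $\nabla u_j\to\nabla u$ a.e.\ in $\Om$, so $|\nabla u_j|^q\to|\nabla u|^q$ a.e.; the uniform capacitary bound makes the measures $|\nabla u_j|^q\,dx$ uniformly absolutely continuous with respect to ${\rm Cap}_{1,s}$, so no mass escapes in the limit, and this is precisely what is needed to invoke the stability theorem of \cite{DMOP} and conclude that $u$ is a renormalized solution of \eqref{Riccati}. Finally the bound $\int_{K\cap\Om}|\nabla u|^q\,dx\le C\,{\rm Cap}_{1,s}(K)$ and the membership $u\in W_0^{1,q}(\Om)$ survive the limit by Fatou's lemma.

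The main obstacle I anticipate is the a priori estimate: producing the \emph{localized} form of Theorem~\ref{main} with its lower‑order (tail) contribution absorbed, and matching it with the capacitary transfer $\nu\mapsto\mathbf{M}_1(\nu)^{s'}\,dx$ so that on the right‑hand side the capacity is still evaluated on (a controlled dilate of) $K$ rather than on a much larger ball. This requires the precise interplay between the geometry of ${\rm Cap}_{1,s}$, the decay of $\mathbf{M}_1$ of a compactly supported measure, and a Calder\'on--Zygmund/covering scheme, in the spirit of \cite{DMOP} and of earlier work on quasilinear equations of Lane--Emden and Riccati type; a secondary nuisance is handling uniformly the whole range $2-1/n<p\le n$, $q>p-1$, in particular the borderline regimes $s=\tfrac{q}{q-p+1}\ge n$.
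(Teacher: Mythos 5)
Your overall skeleton does track the paper's: the a priori bound of Theorem \ref{main} is converted into a capacitary estimate for $|\nabla u|^q$, the Maz'ya--Verbitsky inequalities handle the transfer $\nu\mapsto \mathbf{M}_1(\nu)^{q/(p-1)}$, the smallness of $c_0$ closes the scheme precisely because $q/(p-1)>1$ (your invariant-interval argument for $a\mapsto C(a+c_0)^{q/(p-1)}$ is the same algebra as the paper's analysis of the root $T$ of $g(t)=C_0(t+[\om])^{q/(p-1)}-t$), and the final passage to the limit uses the stability theorem of \cite{DMOP}. However, there is a genuine gap in how you obtain the key inductive estimate. You propose to deduce $\int_{K\cap\Om}|\nabla u_{j+1}|^q\,dx\le a_{j+1}\,\mathrm{Cap}_{1,m}(K)$, $m=\frac{q}{q-p+1}$, from a \emph{localized, unweighted} ($w\equiv1$) version of Theorem \ref{main} on balls together with a covering of $K$. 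Covering by balls can only produce Morrey-type information, $\int_{B_r}|\nabla u_{j+1}|^q\,dx\lesssim r^{\,n-m}$, and for $m>1$ this is strictly weaker than the capacitary condition over arbitrary compacta: $\mathrm{Cap}_{1,m}(K)$ can be much smaller than any sum $\sum_i r_i^{\,n-m}$ over coverings of $K$, so no covering argument recovers the bound you need, and the induction does not close. The paper's mechanism is different and is exactly the reason Theorem \ref{main} is proved for \emph{all} $A_\infty$ weights: the weighted inequality is fed into Lemma 3.1 of \cite{MV} to pass from the norm inequality to the $M^{1,m}$ (capacitary) inequality $[\,|\nabla u|^q\,]_{M^{1,m}(\Om)}\le C[\mathbf{M}_1(\mu)^{\frac{m}{m-1}}]_{M^{1,m}(\Om)}$ (Theorem \ref{mainconsequence}), after which Theorems 1.1--1.2 of \cite{MV} and $\mathbf{M}_1(\mu)\le C\,\mathbf{G}_1*|\mu|$ on $\Om$ give the data transfer that you do correctly anticipate.

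A second gap is the solution scheme itself: a Picard iteration with subsequential compactness does not yield a fixed point. If $u_{j_k}\to u$ with $\nabla u_{j_k}\to\nabla u$ a.e., the stability theorem applied to the equations satisfied by $u_{j_k}$ (whose data are $|\nabla u_{j_k-1}|^q+\om$) identifies $u$ as a renormalized solution of $-\mathrm{div}\,\mathcal{A}(x,\nabla u)=|\nabla \bar u|^q+\om$, where $\bar u$ is a limit of the \emph{shifted} iterates $u_{j_k-1}$; there is no contraction, monotonicity or uniqueness (renormalized solutions are not known to be unique for general measure data) forcing $\bar u=u$, so the scheme as described does not produce a solution of \eqref{Riccati}. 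The paper avoids this by applying Schauder's fixed point theorem to the map $S(v)=u$ on the closed convex set $E=\{v\in W_0^{1,q}(\Om):[\,|\nabla v|^q\,]_{M^{1,m}(\Om)}\le T\}$ in the $W_0^{1,1}$ topology, first for $\om\in\mathcal{M}_0(\Om)$ (where $S$ is well defined by uniqueness), and then treats general $\om$ by mollifying the singular part (Lemma \ref{measureapprox}) and invoking \cite[Theorem 3.4]{DMOP}. Relatedly, your claim that the uniform capacitary bound plus a.e. convergence prevents loss of mass (i.e.\ gives $|\nabla u_j|^q\to|\nabla u|^q$ in $L^1$) is not a formal consequence: uniform absolute continuity with respect to $\mathrm{Cap}_{1,m}$ does not rule out concentration on sets of small Lebesgue measure but non-small capacity; this step in the paper rests on the specific argument of Lemma \ref{cont-and-comp} (see \cite{Ph1}, where $q>1$ is used in an essential way), and it would need to be supplied rather than asserted.
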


It is now well-known that the capacitary condition \eqref{capcondi} is sharp in the sense that if \eqref{Riccati} has a solution with $\om$ being 
 nonnegative and compactly supported in $\Om$ then  \eqref{capcondi} holds with  a different constant $c_0$ (see \cite{Ph1}). Thus Theorem \ref{main-Ric} solves 
 an open problem in \cite[page 13]{VHV} at least for compactly supported measures and  $2-1/n<p\leq n$.

It is worth mentioning that for $p=2$ sharp criteria of existence  were already obtained in the pioneering work \cite{HMV}.  The case $p=2$ and $1+2/n\leq q<2$ was also studied in \cite{GMP} for datum $\om\in L^{n(q-1)/q}(\Om)$, in which the existence of $W^{1,\, 2}_0(\Om)$ solutions was obtained.
For general $p\in (1, n]$, we refer to the recent papers \cite{Ph5} and \cite{VHV} for a brief account on Riccati type equations, especially for the cases $0<q\leq p-1$, $p-1<q< \frac{n(p-1)}{n-1}$ and $q=p$. Here we mention that   an  existence result involving condition \eqref{capcondi} in the super-natural  case $q>p$ has already been obtained in \cite{Ph1} for $\mathcal{C}^1$ domains and for  less general nonlinear structures. Thus, in particular,  Theorem \ref{main-Ric} above fills the gap in the case 
$\frac{n(p-1)}{n-1}\leq q<p$.

Moreover,   Theorem \ref{main-Ric} implies  that the condition $\om\in L^{n(q-p+1)/q, \, \infty}(\Om)$ (with a small norm) is sufficient for the solvability of \eqref{Riccati}, provided that $n(q-p+1)/q>1$, i.e., $q>\frac{n(p-1)}{n-1}$. Other sufficient conditions of Fefferman-Phong type involving Morrey spaces can also be deduced from the capacitary condition  \eqref{capcondi} (see Corollaries 3.5 and 3.6 in \cite{Ph1}). Some of those  sufficient conditions have been obtained recently in \cite{Ph5} for \eqref{Riccati} with more general domains and coefficients. It is still unclear to us whether condition \eqref{capcondi} is also sufficient for the solvability of \eqref{Riccati} under such a general assumption on $\Om$ and $\mathcal{A}$ as in \cite{Ph5}.

To conclude this section we note that Theorem \ref{main-Ric} also implies the following result regarding the size of removable singular sets for homogeneous Riccati type equations. 
For a proof see that of \cite[Theorem 3.9]{Ph1}.
\begin{theorem}\label{remove-S-nec}
Let $2-1/n<p\leq n$ and  $q>p-1$.  Assume  that $\mathcal{A}$ satisfies \eqref{sublinear}-\eqref{ellipticity}. 
Then there exist constants $s=s(n, p, \alpha, \beta)>1$ and $\delta=\delta(n, p, \alpha, \beta, q)\in (0, 1)$ such that the following holds. 
Suppose that  $[\mathcal{A}]_{s}^{R_0}\leq \delta$ and $\Om$ is $(\delta, R_0)$-Reifenberg flat for some $R_0>0$. 
Let $K$  be a compact set in  $\Om$ such that  any solution $u$  to the problem
\begin{equation*}
\left\{\begin{array}{l}
 u\in W^{1,\, q}_{\rm loc}(\Om\setminus K), ~ and\\
-{\rm div} \mathcal{A}(x, \nabla u) =|\nabla u|^q ~ in~
 \mathcal{D}'(\Om\setminus K)
\end{array}
\right.
\end{equation*}
is also a solution to a similar problem with $\Om$ in place of $\Om\setminus K$. Then there holds  
$${\rm Cap}_{1,\,\frac{q}{q-p+1}}(K)=0.$$ 
\end{theorem}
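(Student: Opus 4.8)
The plan is to argue by contraposition. Suppose, contrary to the conclusion, that $\text{Cap}_{1,\, s}(K)>0$, where $s=\frac{q}{q-p+1}$; note that $s\in(1,\infty)$, since the inequality $s>1$ is equivalent to $p>1$, which holds here. The goal is to exhibit a function $u$ that solves the homogeneous Riccati equation $-{\rm div}\,\mathcal{A}(x,\nabla u)=|\nabla u|^q$ in $\mathcal{D}'(\Om\setminus K)$, belongs to $W^{1,\,q}_{\rm loc}(\Om\setminus K)$, yet does \emph{not} solve it in $\mathcal{D}'(\Om)$; this would contradict the assumed removability of $K$. To construct such a $u$, first recall a standard fact from nonlinear potential theory: since $K$ is compact and $\text{Cap}_{1,\, s}(K)>0$, there is a nonzero nonnegative measure $\sigma$ with $\operatorname{supp}\sigma\subset K$ and
\[
\sigma(E)\le C(n,s)\,\text{Cap}_{1,\, s}(E)\qquad\text{for every compact }E\subset\RR^n,
\]
e.g. a suitable multiple of the capacitary measure of $K$ (see \cite{Ph1} and the potential-theoretic references cited therein). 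Rescaling, put $\om:=\varepsilon\,\sigma$ with $\varepsilon>0$ small enough that $|\om|(E\cap\Om)\le\om(E)\le c_0\,\text{Cap}_{1,\, s}(E)$ for all compact $E$, where $c_0$ is the constant furnished by Theorem \ref{main-Ric}. Then $\om$ is a finite, nonzero signed measure in $\Om$ satisfying the capacitary condition \eqref{capcondi}.

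Next, apply Theorem \ref{main-Ric} to this $\om$: it produces a renormalized solution $u\in W^{1,\,q}_0(\Om)$ of $-{\rm div}\,\mathcal{A}(x,\nabla u)=|\nabla u|^q+\om$ in $\Om$ with $u=0$ on $\partial\Om$. Because $q>p-1$ and $\Om$ is bounded, \eqref{sublinear} gives $\mathcal{A}(x,\nabla u)\in L^{q/(p-1)}(\Om)\subset L^1(\Om)$, while $|\nabla u|^q\in L^1(\Om)$; hence, being a renormalized solution, $u$ is in particular a distributional solution,
\[
-{\rm div}\,\mathcal{A}(x,\nabla u)=|\nabla u|^q+\om\qquad\text{in }\mathcal{D}'(\Om).
\]
Since $\operatorname{supp}\om\subset K$, testing only against functions in $C_0^\infty(\Om\setminus K)$ yields $-{\rm div}\,\mathcal{A}(x,\nabla u)=|\nabla u|^q$ in $\mathcal{D}'(\Om\setminus K)$, and of course $u\in W^{1,\,q}_0(\Om)\subset W^{1,\,q}_{\rm loc}(\Om\setminus K)$. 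By the removability hypothesis on $K$, the same function $u$ must then satisfy $-{\rm div}\,\mathcal{A}(x,\nabla u)=|\nabla u|^q$ in $\mathcal{D}'(\Om)$. Subtracting this from the preceding display gives $\om=0$ in $\mathcal{D}'(\Om)$, i.e. $\om=0$ as a measure — contradicting $\om\neq0$. Therefore $\text{Cap}_{1,\, s}(K)=0$.

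The crux is really the potential-theoretic input in the first step: turning positivity of $\text{Cap}_{1,\, s}(K)$ into a nonzero measure supported in $K$ and dominated by the capacity. This is precisely where positivity of the capacity is used, and it rests on the dual description of $\text{Cap}_{1,\, s}$ (equivalently, on Maz'ya's capacitary strong-type inequality). The remaining delicate point is a matter of bookkeeping: one must check that the notion of \emph{solution} appearing in the removability hypothesis is consistent with the distributional formulation satisfied by the renormalized solution of Theorem \ref{main-Ric}, so that the two equations on $\Om$ may be subtracted legitimately; this requires only $\mathcal{A}(x,\nabla u)\in L^1_{\rm loc}(\Om)$ and $|\nabla u|^q\in L^1_{\rm loc}(\Om)$, both of which follow from $u\in W^{1,\,q}_0(\Om)$ together with $q>p-1$. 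All of the genuinely hard analysis has already been absorbed into the proof of Theorem \ref{main-Ric}.
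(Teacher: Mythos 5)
Your proposal is correct and follows essentially the same route as the paper: the paper's proof (via Theorem 3.9 of \cite{Ph1}) likewise takes the capacitary measure $\mu^K$ of $K$ — which is exactly your measure $\sigma$, supported in $K$ and dominated by ${\rm Cap}_{1,\,\frac{q}{q-p+1}}$ — scales it to meet the smallness condition \eqref{capcondi}, invokes Theorem \ref{main-Ric} to produce a solution of \eqref{Riccati} with this datum, and derives the contradiction $\om=0$ from the removability hypothesis. No substantive differences to report.
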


We would like to mention that the proof of Theorem \ref{remove-S-nec} relies on the existence of the so-called capacitary measure $\mu^K$ for a compact set 
$K$ with ${\rm Cap}_{1,\, \frac{q}{q-p+1}}(K)>0$. It is well-known that ${\rm supp} (\mu^K) \subset K$, $\mu^K(K)={\rm Cap}_{1,\, \frac{q}{q-p+1}}(K)$, and moreover $$\mu^K(E) \leq {\rm Cap}_{1,\, \frac{q}{q-p+1}}(E)$$
for all compact  (and Borel) sets $E$. Here the last property follows, e.g., from  \cite[Proposition 6.3.13]{AH} and the capacitability of Borel sets.

%%%%%%%%%%%%%%%%%%%%%%%%%%%%%%%%%%%%%%%%%%%%%%%%%%%%%%%%%%%%%%%%%%

\section{Unweighted local interior and boundary estimates}\label{sec4}

In this section,  we   obtain  certain  local interior and boundary comparison  estimates that are essential to our development
later. First let us consider the interior ones. With $u\in W^{1,\, p}_{\rm loc}(\Om)$,
for each ball $B_{2R}=B_{2R}(x_0)\Subset\Om$ we defined $w\in u+W_{0}^{1,\,p}(B_{2R})$ as the unique solution to the Dirichlet problem
\begin{equation}\label{firstapprox}
\left\{ \begin{array}{rcl}
 \text{div}~ \mathcal{A}(x, \nabla w)&=&0   \quad \text{in} ~B_{2R}, \\ 
w&=&u  \quad \text{on}~ \partial B_{2R}.
\end{array}\right.
\end{equation}

Then a well-known version of Gehring's lemma applied to the function $w$ defined above  yields the following result (see \cite[Theorem 6.7]{Giu} and
\cite[Remark 6.12]{Giu}).
\begin{lemma}\label{RVH} With $u\in W^{1,\, p}_{\rm loc}(\Om)$, let $w$ be as in \eqref{firstapprox}. 
Then there exists a constant $\theta_0=\theta_0(n,p,\alpha,\beta)>1$ such that for any $t\in (0,p]$ the reverse H\"older type inequality
\begin{equation*}
\left(\fint_{B_{\rho/2}(z)} |\nabla w|^{\theta_0 p} dx\right)^{\frac{1}{\theta_0 p}} \leq C \left(\fint_{B_{\rho}(z)} |\nabla w|^t dx\right)^{\frac{1}{t}}
\end{equation*} 
holds for all balls $B_{\rho}(z)\subset B_{2R}(x_0)$ for a constant $C$ depending only on $n,p,\alpha, \beta, t$. 
\end{lemma}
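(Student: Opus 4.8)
The plan is to reduce the claimed reverse Hölder inequality for $\nabla w$ to a standard Caccioppoli-type self-improving estimate and then invoke Gehring's lemma in its local form. First I would establish the Caccioppoli inequality for the $\mathcal{A}$-harmonic function $w$ solving \eqref{firstapprox}: for any ball $B_{2\rho}(z)\subset B_{2R}(x_0)$ and any constant vector $\lambda\in\RR^n$ (taking $\lambda=0$ suffices here since $w$ itself, not $w$ minus a linear function, is what we estimate, but one may also subtract the average $w_{B_{2\rho}}$ from $w$), one has
\begin{equation*}
\left(\fint_{B_{\rho}(z)} |\nabla w|^{p} dx\right)^{\frac{1}{p}} \leq C\left(\fint_{B_{2\rho}(z)} \left|\frac{w - w_{B_{2\rho}(z)}}{\rho}\right|^{p} dx\right)^{\frac{1}{p}},
\end{equation*}
which follows by testing the weak formulation $\int \langle \mathcal{A}(x,\nabla w),\nabla\varphi\rangle\,dx = 0$ against $\varphi = \eta^p\,(w - w_{B_{2\rho}})$ for a standard cutoff $\eta$, using the ellipticity lower bound in \eqref{ellipticity} (equivalently the monotonicity \eqref{monotone} against $\eta\equiv 0$) to bound $\int \eta^p|\nabla w|^p$ from below and the growth bound \eqref{sublinear} together with Young's inequality to absorb the cross term.

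Next I would apply the Sobolev–Poincaré inequality on the right-hand side: with $p_* = \max\{1, np/(n+p)\} < p$ (so $p_* \geq 1$ since $p > 2 - 1/n > 1$), one has
\begin{equation*}
\left(\fint_{B_{2\rho}(z)} \left|\frac{w - w_{B_{2\rho}(z)}}{\rho}\right|^{p} dx\right)^{\frac{1}{p}} \leq C\left(\fint_{B_{2\rho}(z)} |\nabla w|^{p_*} dx\right)^{\frac{1}{p_*}}.
\end{equation*}
Chaining these two estimates gives a reverse Hölder inequality of the form $\left(\fint_{B_{\rho}} |\nabla w|^{p}\right)^{1/p} \leq C\left(\fint_{B_{2\rho}} |\nabla w|^{p_*}\right)^{1/p_*}$ with $p_* < p$, valid on all balls $B_{2\rho}(z) \subset B_{2R}(x_0)$. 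This is precisely the hypothesis of Gehring's lemma (in the interior, local form; see \cite[Theorem 6.7]{Giu} and \cite[Remark 6.12]{Giu}), which upgrades the exponent on the left: there exists $\varepsilon_0 = \varepsilon_0(n,p,\alpha,\beta) > 0$ such that $|\nabla w|^p \in L^{1+\varepsilon_0}_{\rm loc}(B_{2R})$ with the quantitative bound $\left(\fint_{B_{\rho/2}(z)} |\nabla w|^{(1+\varepsilon_0)p}\right)^{1/((1+\varepsilon_0)p)} \leq C\left(\fint_{B_{\rho}(z)} |\nabla w|^{p}\right)^{1/p}$. Setting $\theta_0 = 1+\varepsilon_0$ yields the stated inequality for the exponent $t = p$ on the right. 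To pass to arbitrary $t \in (0,p]$, I would note that by Gehring's lemma the measure $|\nabla w|^p\,dx$ satisfies a reverse Hölder/$A_\infty$-type self-improving property, and a standard interpolation argument (writing $\fint |\nabla w|^p = \fint |\nabla w|^{t\vartheta}|\nabla w|^{p - t\vartheta}$ with Hölder, then absorbing, exactly as in \cite[Remark 6.12]{Giu}) allows replacing the $L^p$ average on the right by an $L^t$ average for any $0 < t \leq p$.

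The main obstacle is purely technical rather than conceptual: one must be careful that the Caccioppoli estimate is genuinely uniform in the ball $B_\rho(z)$ (the constant $C$ depending only on $n,p,\alpha,\beta$, not on $\rho$ or $z$ or $R$), which is automatic by the scaling-invariance of all the quantities involved, and that the Sobolev–Poincaré exponent $p_*$ stays $\geq 1$ — this is exactly why the restriction $p > 2 - 1/n$ (in particular $p > 1$) is harmless here. The other point requiring mild care is the final interpolation down to small $t$, where one uses that once the reverse Hölder inequality holds with a higher exponent on the left, the standard trick of Gehring (or Iwaniec) yields self-improvement on the right-hand exponent as well; this is a black box available from \cite[Remark 6.12]{Giu} and introduces only the dependence of $C$ on $t$.
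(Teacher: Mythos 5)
Your proposal is correct and follows essentially the same route as the paper, which proves this lemma simply by invoking Gehring's lemma in the local form of \cite[Theorem 6.7]{Giu} together with \cite[Remark 6.12]{Giu} to lower the right-hand exponent to arbitrary $t\in(0,p]$; your Caccioppoli plus Sobolev--Poincar\'e reduction is exactly the standard verification of the hypotheses behind that citation, and the coercivity you need indeed follows from \eqref{monotone} with $\eta=0$ and $\mathcal{A}(x,0)=0$.
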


The following important comparison lemma involving an estimate ``below the natural growth exponent" was   established in \cite{Mi2} in the case
$p\geq 2$ and in  \cite{DM2} in the case $2-1/n<p<2$.

Hereafter, for the sake of convenience, given a function $u$, a measure $\mu$, and a ball $B_r(x)$ we define  
\begin{equation*}
F(p, \nabla u,\mu, B_r(x))= \left[ \frac{|\mu|(B_{r}(x))}{r^{n-1}}\right]^{\frac{1}{p-1}} +  \left[ \frac{|\mu|(B_{r}(x))}{r^{n-1}}\right]\left(\fint_{B_{r}(x)}|\nabla u|dx\right)^{2-p}
\end{equation*} 
if $ 2- 1/n <p <2$, and
$$F(p, \nabla u,\mu, B_r(x))=  \left[ \frac{|\mu|(B_{r}(x))}{r^{n-1}}\right]^{\frac{1}{p-1}}$$
if $p\geq 2$.

\begin{lemma} \label{DM}
With $p>2-1/n$, let $u\in W^{1,\, p}_{\rm loc}(\Om)$ be a solution of \eqref{basicpde} and let $w$ be as in \eqref{firstapprox}. Then there is a constant $C=C(n,p,\alpha,\beta)$ such that 
\begin{eqnarray*}
\fint_{B_{2R}} |\nabla u-\nabla w|dx&\leq& C F(p, \nabla u,\mu, B_{2R}(x_0)),
\end{eqnarray*}
and thus
\begin{eqnarray*}
\fint_{B_{2R}} |\nabla w|dx&\leq&  C\fint_{B_{2R}} |\nabla u|dx + C F(p, \nabla u,\mu, B_{2R}(x_0)).
\end{eqnarray*}
\end{lemma}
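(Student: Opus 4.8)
The plan is to use the energy (Caccioppoli-type) estimate for the $\mathcal{A}$-harmonic replacement $w$ together with the strict monotonicity \eqref{monotone} of $\mathcal{A}$, treating the cases $p\ge 2$ and $2-1/n<p<2$ slightly differently in the final bookkeeping. First I would test the weak formulation of the difference equation with the truncation $\varphi_\lambda := T_\lambda(u-w)$, where $T_\lambda$ is the truncation at height $\lambda>0$ introduced in Remark \ref{notionofsol}; note $\varphi_\lambda \in W_0^{1,\,p}(B_{2R})$ is an admissible test function for both \eqref{basicpde} (restricted to $B_{2R}$) and \eqref{firstapprox}. Subtracting the two weak formulations gives
\begin{equation*}
\int_{\{|u-w|<\lambda\}} \langle \mathcal{A}(x,\nabla u)-\mathcal{A}(x,\nabla w), \nabla u-\nabla w\rangle\, dx = \int_{B_{2R}} \varphi_\lambda\, d\mu \leq \lambda\, |\mu|(B_{2R}).
\end{equation*}
By \eqref{monotone} the left-hand side dominates $c(p,\alpha)\int_{\{|u-w|<\lambda\}} (|\nabla u|^2+|\nabla w|^2)^{\frac{p-2}{2}}|\nabla u-\nabla w|^2\, dx$, which is the fundamental quantitative coercivity we exploit.

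Next, for $p\ge 2$ this already controls $\int_{\{|u-w|<\lambda\}}|\nabla u-\nabla w|^p\, dx \leq C\lambda\,|\mu|(B_{2R})$, and then I would pass from this $L^p$-on-a-truncation bound to the $L^1$ bound on the full ball via the standard Boccardo--Gallou\"et/Maz'ya truncation-and-interpolation argument: estimate the distribution function of $|\nabla(u-w)|$ using Chebyshev on the set $\{|u-w|\geq\lambda\}$ (whose measure is controlled through a Sobolev/Poincar\'e inequality applied to $(u-w)$), optimize over $\lambda$, and arrive at $\fint_{B_{2R}}|\nabla u-\nabla w|\,dx \leq C\,[|\mu|(B_{2R})/R^{n-1}]^{1/(p-1)}$. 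For $2-1/n<p<2$ the monotonicity gives only the weighted quantity above, so I would insert the elementary inequality $|\nabla u-\nabla w| \leq C\,(|\nabla u|^2+|\nabla w|^2)^{\frac{2-p}{4}}\big[(|\nabla u|^2+|\nabla w|^2)^{\frac{p-2}{4}}|\nabla u-\nabla w|\big]$ and apply H\"older with exponents $\frac{2}{2-p}$ and $\frac{2}{p}$; the first factor produces a term of the form $\big(\fint |\nabla u|+|\nabla w|\big)^{2-p}$ and the second factor is controlled by the coercivity bound — this is precisely where the second summand in $F(p,\nabla u,\mu,B_r(x))$, with the $(\fint_{B_r}|\nabla u|\,dx)^{2-p}$ weight, comes from. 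Absorbing $\fint|\nabla w|$ into $\fint|\nabla u|$ when needed (or keeping it, since the final statement only claims a bound by $\fint|\nabla u|+F$) closes the first inequality, and the second inequality is then just the triangle inequality $\fint|\nabla w| \leq \fint|\nabla u| + \fint|\nabla u-\nabla w|$.

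The main obstacle I anticipate is the subquadratic case $2-1/n<p<2$: the truncation method there is genuinely more delicate because the coercivity is degenerate and one must keep careful track of the nonlinear weight $(|\nabla u|^2+|\nabla w|^2)^{(p-2)/2}$ through the H\"older splitting and the optimization in $\lambda$, and one needs $p>2-1/n$ precisely so that the resulting exponents in the Sobolev embedding step are admissible (this is the classical threshold ensuring $u\in W^{1,1}_{\rm loc}$ for measure data). Rather than reproduce this, I would simply invoke \cite{DM2} for $2-1/n<p<2$ and \cite{Mi2} for $p\ge2$, where these comparison estimates ``below the natural growth exponent'' were established in exactly this form; the contribution here is only to record them in the normalized shape involving $F(p,\nabla u,\mu,B_r(x))$ that is convenient for the iteration in the later sections. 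A minor technical point to handle is that $u$ is a priori only a limit of truncations (Remark \ref{notionofsol}), so the test-function computation above should first be carried out for $T_k(u)$ with its associated measure $\mu_k$ and then passed to the limit using the weak convergence $|\mu_k|\to|\mu|$; since all constants are uniform in $k$, this causes no difficulty.
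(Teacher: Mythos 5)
Your proposal is correct and matches the paper's treatment: the paper offers no proof of this lemma either, simply quoting the comparison estimates of \cite{Mi2} (for $p\geq 2$) and \cite{DM2} (for $2-1/n<p<2$), which is exactly what you do after sketching the underlying truncation argument. Your heuristic sketch is sound in outline (the subquadratic splitting is really a Cauchy--Schwarz pairing of $(|\nabla u|^2+|\nabla w|^2)^{(2-p)/4}$ against the coercivity density, rather than H\"older with exponents $\tfrac{2}{2-p}$ and $\tfrac{2}{p}$), but since the estimate is invoked from the cited references in the exact normalized form involving $F$, this bookkeeping detail is immaterial.
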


Next with $u$ and $w$ being as in \eqref{firstapprox} where $B_{2R}=B_{2R}(x_0)$ we further define another function $v\in w+ W_{0}^{1,\,p}(B_{R})$
as the unique solution to the Dirichlet problem
\begin{equation}\label{secondapprox}
\left\{ \begin{array}{rcl}
 \text{div}~ \overline{\mathcal{A}}_{B_{R}}(\nabla v)&=&0   \quad \text{in} ~B_{R}, \\ 
v&=&w  \quad \text{on}~ \partial B_{R},
\end{array}\right.
\end{equation}
where $B_R=B_R(x_0)$.

We shall omit the proof of the next lemma as it is completely similar to the proof of Lemma 3.9 (for  $p\geq 2$) and Lemma 3.10 (for $1<p<2$) in \cite{Ph3}.  
\begin{lemma}\label{BMOapprox1}
Let $s=\frac{\theta p}{(\theta-1)(p-1)}$ for some $\theta\in(1,\theta_0]$, where $\theta_0>1$ is as in Lemma \ref{RVH}. Let also 
$w$ and $v$ be as in \eqref{firstapprox} and \eqref{secondapprox}, respectively. Then there is a constant $C=C(n,p,\alpha,\beta)$ such that 
\begin{equation*}
\fint_{B_{R}} |\nabla v-\nabla w|^pdx \leq C \left(\fint_{B_{R}}\Upsilon(\mathcal{A},
B_{R})^{s}dx\right)^{\min\{p, \frac{p}{p-1}\}/s} \left(\fint_{B_{2R}}|\nabla w| dx\right)^{p}. 
\end{equation*} 
\end{lemma}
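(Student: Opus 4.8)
The plan is to compare $\nabla v$ and $\nabla w$ by testing the weak formulations of \eqref{firstapprox} and \eqref{secondapprox} against $\varphi = v - w \in W_0^{1,\,p}(B_R)$, and then to exploit the monotonicity \eqref{monotone} of the averaged operator $\overline{\mathcal{A}}_{B_R}$ together with the $(\delta,R_0)$-BMO control of $\mathcal{A} - \overline{\mathcal{A}}_{B_R}$. Subtracting the two equations gives
\[
\int_{B_R}\langle \overline{\mathcal{A}}_{B_R}(\nabla v) - \overline{\mathcal{A}}_{B_R}(\nabla w),\, \nabla v - \nabla w\rangle\, dx = \int_{B_R}\langle \mathcal{A}(x,\nabla w) - \overline{\mathcal{A}}_{B_R}(\nabla w),\, \nabla v - \nabla w\rangle\, dx,
\]
since $\mathrm{div}\,\mathcal{A}(x,\nabla w)=0$ in $B_{2R}\supset B_R$ and $\mathrm{div}\,\overline{\mathcal{A}}_{B_R}(\nabla v)=0$ in $B_R$. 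The left-hand side is bounded below, via \eqref{monotone}, by $c\int_{B_R}(|\nabla v|^2+|\nabla w|^2)^{\frac{p-2}{2}}|\nabla v-\nabla w|^2\,dx$, and by the definition of $\Upsilon$ the right-hand side is at most $\int_{B_R}\Upsilon(\mathcal{A},B_R)(x)\,|\nabla w|^{p-1}\,|\nabla v-\nabla w|\,dx$.

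The next step is to convert the monotonicity lower bound into a genuine bound on $\int_{B_R}|\nabla v-\nabla w|^p\,dx$. This is the standard two-case split: when $p\ge 2$ one uses $(|\nabla v|^2+|\nabla w|^2)^{\frac{p-2}{2}}|\nabla v-\nabla w|^2 \ge c|\nabla v-\nabla w|^p$ directly; when $2-1/n<p<2$ one writes $|\nabla v-\nabla w|^p = \big[(|\nabla v|^2+|\nabla w|^2)^{\frac{p-2}{2}}|\nabla v-\nabla w|^2\big]^{p/2}\big[(|\nabla v|^2+|\nabla w|^2)^{\frac{2-p}{2}}\big]^{p/2}$, applies Hölder with exponents $2/p$ and $2/(2-p)$, and absorbs the factor $\big(\int_{B_R}(|\nabla v|^2+|\nabla w|^2)^{p/2}\big)^{(2-p)/2}$, controlled by $\big(\int_{B_R}|\nabla v|^p + |\nabla w|^p\big)^{(2-p)/2}$, at the end. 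In either regime one also needs $\int_{B_R}|\nabla v|^p\,dx \le C\int_{B_R}|\nabla w|^p\,dx$, which follows from the energy estimate for \eqref{secondapprox} (test with $v-w$ and use \eqref{sublinear}, \eqref{ellipticity}).

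Finally I would estimate the right-hand side $\int_{B_R}\Upsilon(\mathcal{A},B_R)\,|\nabla w|^{p-1}\,|\nabla v - \nabla w|\,dx$ by Hölder's inequality, distributing exponents so that $\Upsilon(\mathcal{A},B_R)$ lands in $L^s$. Writing $\tfrac1s + \tfrac1{p'} \cdot \tfrac{1}{?} + \cdots$ — concretely, the choice $s = \frac{\theta p}{(\theta-1)(p-1)}$ is dictated by requiring $|\nabla w|^{p-1}$ to sit in $L^{\theta p/(p-1)}$, i.e. $|\nabla w|\in L^{\theta p}$, which is exactly what Lemma \ref{RVH}'s reverse Hölder inequality (with exponent $\theta_0 p$, $\theta\le\theta_0$) upgrades from $L^1$; this is where passing from the $\fint_{B_R}$ average to $\fint_{B_{2R}}|\nabla w|\,dx$ on the right-hand side comes in. After applying Hölder, one uses Young's inequality to absorb the $|\nabla v-\nabla w|^p$ factor into the left-hand side, leaving exactly $C\big(\fint_{B_R}\Upsilon(\mathcal{A},B_R)^s\big)^{\min\{p,p/(p-1)\}/s}\big(\fint_{B_{2R}}|\nabla w|\big)^p$ after the bookkeeping; the exponent $\min\{p,p/(p-1)\}$ on the BMO factor records whether the Young split is done in the $p\ge2$ form (giving power $p/(p-1)$ on $\Upsilon$ after raising the linear-in-$\Upsilon$ term to the appropriate power) or the $p<2$ form (giving power $p$). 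The main obstacle is the subquadratic case $2-1/n<p<2$: keeping track of the Hölder exponents through the degenerate-ellipticity trick and verifying that the absorbed $(|\nabla v|^2+|\nabla w|^2)^{p/2}$ term really is controlled by $\fint_{B_{2R}}|\nabla w|$ (again via Lemma \ref{RVH} and the energy bound for $v$) is where the care is needed; but since this is carried out verbatim in \cite[Lemmas 3.9--3.10]{Ph3}, I would simply invoke that argument.
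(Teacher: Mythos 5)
Your proposal is correct and follows essentially the same route as the paper: the paper omits the proof precisely because it is the comparison argument of \cite[Lemmas 3.9--3.10]{Ph3}, i.e.\ testing both equations with $v-w$, using the monotonicity of $\overline{\mathcal{A}}_{B_R}$ and the bound $|\mathcal{A}(x,\nabla w)-\overline{\mathcal{A}}_{B_R}(\nabla w)|\le \Upsilon(\mathcal{A},B_R)|\nabla w|^{p-1}$, splitting the cases $p\ge 2$ and $2-1/n<p<2$, and closing with H\"older against $L^s$ and the reverse H\"older inequality of Lemma \ref{RVH} to pass to $\fint_{B_{2R}}|\nabla w|\,dx$. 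Your exponent bookkeeping (the conjugate pair $\theta/(\theta-1)$, $\theta$ and the resulting power $\min\{p,\tfrac{p}{p-1}\}$) checks out.
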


\begin{corollary} \label{mainlocalestimate-interior} Let $s=\frac{\theta p}{(\theta-1)(p-1)}$ for some $\theta\in(1,\theta_0]$, where $\theta_0>1$ is as found in Lemma \ref{RVH}.  If $u\in W_{\rm loc}^{1,\, p}(\Om)$ is a weak solution of 
\eqref{basicpde} in $B_{2R}=B_{2R}(x_0)\Subset\Om$ with $2R\leq R_0$, 
then there is a function $v\in W^{1,\, p}(B_R)\cap W^{1,\, \infty}(B_{R/2})$ such that 
\begin{equation}\label{Linftyboundint}
\norm{\nabla v}_{L^{\infty}(B_{R/2})}\leq C \fint_{B_{2R}}|\nabla u| dx + C F(p, \nabla u,\mu, B_{2R}(x_0)),
\end{equation}
and 
\begin{eqnarray}\label{u-vv}
&&\fint_{B_{R}} |\nabla u-\nabla v|dx\leq C F(p, \nabla u,\mu, B_{2R}(x_0)) +\\
 && \quad +\, C ([\mathcal{A}]^{R_0}_{s})^{\min\{1, \frac{1}{p-1}\}}\left( \fint_{B_{2R}}|\nabla u|dx + 
 F(p, \nabla u,\mu, B_{2R}(x_0)) \right).   \nonumber
\end{eqnarray}
Here $C=C(n,p,\alpha, \beta)$.
\end{corollary}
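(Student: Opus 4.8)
The plan is to combine the three comparison lemmas already established (Lemma \ref{DM}, Lemma \ref{BMOapprox1}, and the reverse H\"older inequality of Lemma \ref{RVH}) by chaining the auxiliary functions $u \rightsquigarrow w \rightsquigarrow v$, where $w$ solves \eqref{firstapprox} on $B_{2R}$ and $v$ solves \eqref{secondapprox} on $B_R$. First I would record the regularity of $v$: since $\overline{\mathcal{A}}_{B_R}$ is an $x$-independent nonlinearity satisfying \eqref{sublinear}--\eqref{ellipticity} with the same structural constants, the standard interior Lipschitz estimate for such frozen-coefficient $p$-Laplacian-type equations (see, e.g., \cite{DiB, Tol}, or the references in \cite{Ph3}) gives $v\in W^{1,\infty}(B_{R/2})$ together with
\begin{equation*}
\norm{\nabla v}_{L^\infty(B_{R/2})} \leq C \fint_{B_R} |\nabla v|\, dx.
\end{equation*}
This yields the qualitative claim $v\in W^{1,p}(B_R)\cap W^{1,\infty}(B_{R/2})$ of the corollary.

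Next I would propagate the $L^1$-averages along the chain. From Lemma \ref{DM}, $\fint_{B_{2R}}|\nabla w|\,dx \leq C\fint_{B_{2R}}|\nabla u|\,dx + C F(p,\nabla u,\mu,B_{2R}(x_0))$. For the step from $w$ to $v$, I would use Lemma \ref{BMOapprox1}: by H\"older's inequality,
\begin{equation*}
\fint_{B_R} |\nabla v - \nabla w|\, dx \leq \left(\fint_{B_R}|\nabla v - \nabla w|^p\, dx\right)^{1/p} \leq C ([\mathcal{A}]^{R_0}_s)^{\min\{1,\frac{1}{p-1}\}} \fint_{B_{2R}}|\nabla w|\, dx,
\end{equation*}
where I have absorbed the factor $\big(\fint_{B_R}\Upsilon(\mathcal{A},B_R)^s\big)^{\min\{1,1/(p-1)\}/s}$ into $([\mathcal{A}]^{R_0}_s)^{\min\{1,1/(p-1)\}}$ using $R\leq R_0$ and Definition \ref{bmodefinition}. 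Then $\fint_{B_R}|\nabla v|\,dx \leq \fint_{B_R}|\nabla w|\,dx + \fint_{B_R}|\nabla v - \nabla w|\,dx \leq C\fint_{B_{2R}}|\nabla w|\,dx$ (using $[\mathcal{A}]^{R_0}_s\leq\delta$ is bounded), and combining with the $w$-estimate and feeding into the Lipschitz bound gives \eqref{Linftyboundint}.

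Finally, for \eqref{u-vv} I would write $\fint_{B_R}|\nabla u - \nabla v|\,dx \leq \fint_{B_R}|\nabla u - \nabla w|\,dx + \fint_{B_R}|\nabla v - \nabla w|\,dx$. The first term is controlled by $\fint_{B_{2R}}|\nabla u - \nabla w|\,dx$ (enlarging the domain of integration costs only a dimensional constant since $B_R\subset B_{2R}$), which by Lemma \ref{DM} is $\leq C F(p,\nabla u,\mu,B_{2R}(x_0))$; the second term is bounded as above by $C([\mathcal{A}]^{R_0}_s)^{\min\{1,1/(p-1)\}}\fint_{B_{2R}}|\nabla w|\,dx$, and then one substitutes the bound for $\fint_{B_{2R}}|\nabla w|\,dx$ from Lemma \ref{DM}. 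Collecting the two pieces produces exactly the right-hand side of \eqref{u-vv}. The main point requiring care is the bookkeeping of the exponents $\min\{1, \frac{1}{p-1}\}$ versus $\min\{p, \frac{p}{p-1}\}/s$ in passing from Lemma \ref{BMOapprox1} to an $L^1$ statement, and making sure the dependence of $s$ on $\theta$ is consistent between Lemma \ref{RVH}, Lemma \ref{BMOapprox1}, and the corollary; none of this is deep, but it is where a sign or exponent slip would most easily occur. The citation of the interior Lipschitz regularity for the frozen equation \eqref{secondapprox} is the only ingredient not proved in the excerpt, and it is classical.
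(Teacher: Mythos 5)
Your argument follows essentially the same route as the paper's proof: chain $u\to w\to v$ via \eqref{firstapprox} and \eqref{secondapprox}, control $\fint_{B_{2R}}|\nabla w|\,dx$ by Lemma \ref{DM}, convert Lemma \ref{BMOapprox1} into an $L^1$ comparison by H\"older's inequality (your exponent bookkeeping $\min\{p,\tfrac{p}{p-1}\}/(sp)=\min\{1,\tfrac{1}{p-1}\}/s$ and the use of $R\le R_0$ to pass to $[\mathcal{A}]^{R_0}_{s}$ are exactly right), and finish \eqref{u-vv} by the triangle inequality and Lemma \ref{DM} again. The only real divergence is how you close the sup-bound \eqref{Linftyboundint}: the paper estimates $\norm{\nabla v}_{L^{\infty}(B_{R/2})}\le C\left(\fint_{B_R}|\nabla v|^p dx\right)^{1/p}\le C\left(\fint_{B_R}|\nabla w|^p dx\right)^{1/p}$ by the standard energy comparison for \eqref{secondapprox}, and then drops to $C\fint_{B_{2R}}|\nabla w|\,dx$ via the reverse H\"older inequality of Lemma \ref{RVH} applied to $w$ with $t=1$; you instead invoke an $L^1$-mean interior Lipschitz estimate for the frozen equation and bound $\fint_{B_R}|\nabla v|\,dx$ through $\fint_{B_R}|\nabla v-\nabla w|\,dx$. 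That route works, but two points need tightening. First, your parenthetical ``using $[\mathcal{A}]^{R_0}_{s}\le\delta$'' appeals to a smallness hypothesis that is not part of the corollary (and would make the constant depend on it); what you actually need is only that $[\mathcal{A}]^{R_0}_{s}$ is bounded in terms of structural data, which is automatic: by \eqref{sublinear} one has $\Upsilon(\mathcal{A},B_r(y))(x)\le 2\beta$ pointwise, hence $[\mathcal{A}]^{R_0}_{s}\le 2\beta$, and with that substitution your constant is again $C(n,p,\alpha,\beta)$. Second, the $L^1$-mean form of the interior Lipschitz estimate is true but is a (standard) self-improvement of the $L^p$-mean statement cited in \cite{DiB}; quoting the $L^p$ form and proceeding as the paper does (energy comparison plus Lemma \ref{RVH}) avoids having to justify that extra step.
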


\begin{proof} Let $w$ and $v$ be as in \eqref{firstapprox} and \eqref{secondapprox}. 
By standard interior regularity (see, e.g., Proposition 3.3 in \cite{DiB}) and Lemma \ref{RVH} we have 
\begin{eqnarray*}
\norm{\nabla v}_{L^{\infty}(B_{R/2})}&\leq& C_1 \left(\fint_{B_{R}}|\nabla v|^p dx\right)^{1/p}\\
&\leq& C_2\left(\fint_{B_{R}}|\nabla w|^p dx\right)^{1/p} \\
&\leq& C_3\fint_{B_{2R}}|\nabla w| dx.
\end{eqnarray*}
 
 Thus it follows from Lemma \ref{DM}  that the  bound \eqref{Linftyboundint} holds.

On the other hand, by Lemma \ref{BMOapprox1} and H\"older's inequality
we find  
\begin{equation*}
\fint_{B_{R}} |\nabla v-\nabla w| dx \leq C ([\mathcal{A}]^{R_0}_{s})^{\min\{1, \frac{1}{p-1}\}} \fint_{B_{2R}}|\nabla w| dx. 
\end{equation*} 

This and triangle inequality yield
\begin{equation*}
\fint_{B_{R}} |\nabla u-\nabla v| dx \leq  \fint_{B_{R}} |\nabla u-\nabla w| dx +
  C ([\mathcal{A}]^{R_0}_{s})^{\min\{1, \frac{1}{p-1}\}} \fint_{B_{2R}}|\nabla w| dx.
\end{equation*} 

Thus inequality  \eqref{u-vv}  then  follows from Lemma \ref{DM}.
\end{proof}

Next, we consider  the corresponding estimates near the boundary.  Recall that $\Om$ is a $(\delta, R_0)$-Reifenberg flat domain.
Fix $x_0\in\partial\Om$ and $0<R<R_0/10$. With $u\in W^{1,\, p}_0(\Om)$  being a solution to \eqref{basicpde} we define $w\in u+W_{0}^{1,\,p}(\Om_{10R}(x_0))$ as the unique solution to the Dirichlet problem
\begin{equation}\label{wapprox}
\left\{ \begin{array}{rcl}
 \text{div}~ \mathcal{A}(x, \nabla w)&=&0   \quad \text{in} ~\Om_{10R}(x_0), \\ 
w&=&u  \quad \text{on}~ \partial \Om_{10R}(x_0).
\end{array}\right.
\end{equation}
Hereafter, the notation $\Om_r(x)$ indicates the set $\Om\cap B_r(x)$.

We next extend $\mu$ and $u$ by zero to $\RR^n\setminus\Om$ and then extend $w$ by $u$ to $\RR^n\setminus\Om_{10R}(x_0)$.
Note  that for a $(\delta, R_0)$-Reifenberg flat domain we have a density estimate
$$|B_{t}(x)\cap (\RR^n\setminus\Om)|\geq c |B_{t}(x)|, \quad \forall x\in\partial\Om,~0<t<R_0,$$ 
which holds with  $c=[(1-\delta)/2]^n\geq 4^{-n}$ provided $\delta<1/2$. Thus by \cite[Lemma 2.5]{Ph4} we have the following boundary counterpart of Lemma \ref{RVH}.

\begin{lemma}\label{RVHbdry} With $u\in W^{1,\, p}_{0}(\Om)$,  let $w$ be as in  \eqref{wapprox} and suppose that $\delta<1/2$.
Then there exists a constant $\theta_0=\theta_0(n,p,\alpha,\beta)>1$ such that for every $t\in (0,p]$ the reverse H\"older type inequality
\begin{equation*}
\left(\fint_{B_{\rho/2}(z)} |\nabla w|^{\theta_0 p} dx\right)^{\frac{1}{\theta_0 p}} \leq C \left(\fint_{B_{3\rho}(z)} |\nabla w|^t dx\right)^{\frac{1}{t}}
\end{equation*} 
holds for all balls $B_{3\rho}(z)\subset B_{10R}(x_0)$ with a constant $C=C(n,p, t, \alpha, \beta)$. 
\end{lemma}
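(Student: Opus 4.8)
The plan is to reduce the asserted estimate to a \emph{weak reverse H\"older inequality} for $|\nabla w|$ — one carrying a power strictly below $p$ on its right-hand side over a slightly enlarged ball — and then to invoke the self-improving property of such inequalities (Gehring's lemma), just as in the interior Lemma~\ref{RVH}. Fix a ball $B_{3\rho}(z)\subset B_{10R}(x_0)$ and split into two cases. If $\mathrm{dist}(z,\partial\Om)\geq\rho$, then $B_{\rho}(z)\subset\Om_{10R}(x_0)$ and $w$ is $\mathcal{A}$-harmonic there; the interior Caccioppoli inequality together with the Sobolev--Poincar\'e inequality (subtracting the mean of $w$) gives, with $p_*:=np/(n+p)<p$, the bound $\big(\fint_{B_{\rho/2}(z)}|\nabla w|^{p}\,dx\big)^{1/p}\leq C\big(\fint_{B_{\rho}(z)}|\nabla w|^{p_*}dx\big)^{1/p_*}$. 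If instead $\mathrm{dist}(z,\partial\Om)<\rho$, choose $x_1\in\partial\Om$ with $|x_1-z|<\rho$, so that $B_{\rho/2}(z)\subset B_{3\rho/2}(x_1)$ and $B_{2\rho}(x_1)\subset B_{3\rho}(z)\subset B_{10R}(x_0)$, and work on balls centered at $x_1$, where $w$ is $\mathcal{A}$-harmonic in $\Om_{2\rho}(x_1)$ and $w=0$ on $\partial\Om\cap B_{2\rho}(x_1)$.

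For this boundary case I would exploit that $w$ has been extended by the zero-extension of $u$, so $w\equiv0$ on $B_{2\rho}(x_1)\setminus\Om$, and hence $\nabla w=0$ a.e.\ there. By the exterior density estimate recorded just before the statement, $|B_{2\rho}(x_1)\setminus\Om|\geq 4^{-n}|B_{2\rho}(x_1)|$ — this is precisely where the hypothesis $\delta<1/2$ is used — so $w$ restricted to $B_{2\rho}(x_1)$ vanishes on a subset of definite relative measure. Consequently the Sobolev--Poincar\'e inequality for functions vanishing on a portion of a ball applies and yields $\big(\fint_{B_{2\rho}(x_1)}|w|^{p}dx\big)^{1/p}\leq C\,\rho\,\big(\fint_{B_{2\rho}(x_1)}|\nabla w|^{p_*}dx\big)^{1/p_*}$ with $C$ depending only on $n$, $p$ and the density constant. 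Coupling this with the boundary Caccioppoli inequality for $w$ on $B_{3\rho/2}(x_1)$ — obtained by testing the equation with $\eta^{p}w$ for a cutoff $\eta$ supported in $B_{2\rho}(x_1)$, which is admissible since $w$ vanishes on $\partial\Om\cap B_{2\rho}(x_1)$ — one arrives at $\big(\fint_{B_{\rho/2}(z)}|\nabla w|^{p}dx\big)^{1/p}\leq C\big(\fint_{B_{3\rho}(z)}|\nabla w|^{p_*}dx\big)^{1/p_*}$ in this case as well.

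Having this weak reverse H\"older inequality over every ball $B_{3\rho}(z)\subset B_{10R}(x_0)$, Gehring's lemma in the nested-ball form (cf.\ \cite[Theorem~6.7 and Remark~6.12]{Giu}, which also underlies \cite[Lemma~2.5]{Ph4}) produces a constant $\theta_0=\theta_0(n,p,\alpha,\beta)>1$ and the higher-integrability bound with $|\nabla w|^{\theta_0 p}$ on the left and $|\nabla w|^{p_*}$ on the right. Passing to an arbitrary exponent $t\in(0,p]$ on the right is then routine: for $t\in[p_*,p]$ H\"older's inequality suffices, while for $t\in(0,p_*)$ one uses the standard interpolation-and-covering argument, bounding $\fint_B|\nabla w|^{p_*}$ by $\big(\sup_B|\nabla w|\big)^{p_*-t}\fint_B|\nabla w|^{t}$ on a chain of shrinking balls and absorbing via Young's inequality, exactly as in the interior case. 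I expect the main obstacle to be the boundary Sobolev--Poincar\'e step: it requires the complement of $\Om$ to occupy a definite fraction of \emph{every} small ball centered on $\partial\Om$, uniformly down to scale zero, so that the Poincar\'e constant does not degenerate — and it is the smallness hypothesis $\delta<1/2$, through the density estimate, that guarantees this uniformity and thereby the universality of the constant $C$.
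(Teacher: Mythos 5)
Your argument is correct in substance, but it takes a different (more self-contained) route than the paper: in the text this lemma is not proved directly at all — it is obtained by quoting \cite[Lemma 2.5]{Ph4}, the only ingredient verified on the spot being the complement density estimate $|B_t(x)\cap(\RR^n\setminus\Om)|\geq 4^{-n}|B_t(x)|$ for $x\in\partial\Om$, $0<t<R_0$, which is precisely where $\delta<1/2$ enters and which you correctly identify as the crucial point. What you have written is essentially a reconstruction of the proof hidden behind that citation: zero extension of $w$ across $\partial\Om$ (legitimate since $w-u\in W^{1,p}_0(\Om_{10R}(x_0))$ and $u\in W^{1,p}_0(\Om)$), boundary Caccioppoli with test function $\eta^p w$, the Sobolev--Poincar\'e inequality for functions vanishing on a subset of definite relative measure, Gehring's lemma, and then the lowering of the right-hand exponent; this buys a proof independent of \cite{Ph4}, at the cost of redoing standard work. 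Two small repairs are needed. First, in the case ${\rm dist}(z,\partial\Om)\geq\rho$ you implicitly assume $z\in\Om$; if $z\notin\Om$ then $B_\rho(z)\cap\Om=\emptyset$, $\nabla w\equiv 0$ there, and the estimate is trivial, so this should simply be noted. Second, the interpolation step you propose for $t\in(0,p_*)$, namely $\fint_B|\nabla w|^{p_*}\leq\bigl(\sup_B|\nabla w|\bigr)^{p_*-t}\fint_B|\nabla w|^{t}$, is not usable as written because $\sup_B|\nabla w|$ need not be finite; the standard argument (this is exactly \cite[Remark 6.12]{Giu}, already invoked for the interior Lemma \ref{RVH}) instead interpolates the $L^{p_*}$ average between the $L^{t}$ and $L^{p}$ (or $L^{\theta_0 p}$) averages on intermediate balls and absorbs the large-exponent term by the usual iteration lemma on nested radii. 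With these adjustments the proof is complete and the constants depend only on $n,p,t,\alpha,\beta$, as claimed.
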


The following lemma was also  obtained in \cite{Ph4}.
\begin{lemma} \label{DMbdry}
With $p>2-1/n$, let $u\in W^{1,\, p}_{0}(\Om)$ be a solution of \eqref{basicpde} and let $w$ be as in \eqref{wapprox}. Then there is a constant $C=C(n,p,\alpha,\beta)$ such that 
\begin{equation*}
\fint_{B_{10R}(x_0)} |\nabla u-\nabla w|dx \leq C F(p, \nabla u,\mu, B_{10R}(x_0)),
\end{equation*}
and thus 
\begin{equation*}
\fint_{B_{10R}(x_0)} |\nabla w|dx \leq C \fint_{B_{10R}(x_0)} |\nabla u|dx + C F(p, \nabla u,\mu, B_{10R}(x_0)).
\end{equation*}
\end{lemma}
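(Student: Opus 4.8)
The plan is to adapt to the Reifenberg-flat boundary the comparison scheme of Boccardo--Gallou\"et and Duzaar--Mingione that underlies Lemma \ref{DM} in the interior (cf. \cite{DM2}); the only genuinely new ingredient is that the interior Poincar\'e inequality gets replaced by a Sobolev--Poincar\'e inequality on $\Om_{10R}(x_0)$ that is available because $\RR^n\setminus\Om$ has positive density at $x_0$. Put $\phi:=u-w\in W^{1,\,p}_0(\Om_{10R}(x_0))$, extended by zero to $\RR^n\setminus\Om_{10R}(x_0)$. Subtracting the equations \eqref{basicpde} and \eqref{wapprox},
\[
-\,{\rm div}\big(\mathcal{A}(x,\nabla u)-\mathcal{A}(x,\nabla w)\big)=\mu \quad\text{in }\Om_{10R}(x_0)
\]
in the distributional sense. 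Since $\nabla u$ is a priori only controlled in the Marcinkiewicz class supplied by the renormalized-solution framework, I would first carry out all the estimates below with $T_k(u)$ and its associated finite measure $\mu_k$ (Remark \ref{notionofsol}) in place of $u$ and $\mu$, using the truncations $T_j(\phi)$ as admissible test functions, and then let $k\to\infty$ using that $\nabla T_k(u)\to\nabla u$ in $L^1_{\rm loc}$ and $|\mu_k|\rightharpoonup|\mu|$.

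Step 1: testing the equation for $\phi$ with $T_j(\phi)$ and using the strict monotonicity \eqref{monotone} gives, for every $j>0$,
\[
\int_{\{|\phi|<j\}\cap B_{10R}(x_0)}\big(|\nabla u|^2+|\nabla w|^2\big)^{\frac{p-2}{2}}|\nabla\phi|^2\,dx\ \le\ C\,j\,|\mu|\big(B_{10R}(x_0)\big).
\]
Step 2 turns this into a sub-natural $L^1$ gradient bound, and it is here that $p>2-1/n$ is used: it is equivalent to $\frac{n(p-1)}{n-1}>1$, so $\nabla\phi$ lies below the critical Boccardo--Gallou\"et integrability and an $L^1$ bound is within reach. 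For $p\ge2$ one simply drops the weight (it dominates $|\nabla\phi|^{p-2}$), estimates $|\{|\nabla\phi|>\lambda\}\cap B_{10R}(x_0)|$ by splitting into $\{|\phi|<j\}$ (Chebyshev and Step 1) and $\{|\phi|\ge j\}$ (Chebyshev and the Sobolev--Poincar\'e inequality stated below), optimizes in $j=j(\lambda)$, and integrates in $\lambda$ to arrive at
\[
\fint_{B_{10R}(x_0)}|\nabla u-\nabla w|\,dx\ \le\ C\Big[\frac{|\mu|(B_{10R}(x_0))}{R^{n-1}}\Big]^{\frac1{p-1}}=C\,F(p,\nabla u,\mu,B_{10R}(x_0)).
\]

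For $2-1/n<p<2$ the weight is singular and cannot be discarded; instead I would factor
\[
|\nabla\phi|=\Big[(|\nabla u|^2+|\nabla w|^2)^{\frac{p-2}{2}}|\nabla\phi|^2\Big]^{\frac12}\big(|\nabla u|^2+|\nabla w|^2\big)^{\frac{2-p}{4}},
\]
apply Cauchy--Schwarz over $\{|\phi|<j\}$, bound the first factor by $(Cj\,|\mu|(B_{10R}(x_0)))^{1/2}$ through Step 1, and bound the second factor, by Jensen's inequality since $2-p<1$, by $C|B_{10R}(x_0)|^{\frac{p-1}{2}}\big(\fint_{B_{10R}(x_0)}(|\nabla u|+|\nabla w|)\big)^{\frac{2-p}{2}}$. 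The same level-set optimization, after replacing $\fint|\nabla w|$ by $\fint|\nabla u|+\fint|\nabla\phi|$ and absorbing the resulting term $\big[\tfrac{|\mu|(B_{10R}(x_0))}{R^{n-1}}\big]\big(\fint|\nabla\phi|\big)^{2-p}$ via Young's inequality, then yields
\[
\fint_{B_{10R}(x_0)}|\nabla u-\nabla w|\,dx\ \le\ C\Big[\frac{|\mu|(B_{10R}(x_0))}{R^{n-1}}\Big]^{\frac1{p-1}}+C\Big[\frac{|\mu|(B_{10R}(x_0))}{R^{n-1}}\Big]\Big(\fint_{B_{10R}(x_0)}|\nabla u|\Big)^{2-p},
\]
which is exactly $C\,F(p,\nabla u,\mu,B_{10R}(x_0))$. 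The ``and thus'' inequality is then immediate from the triangle inequality.

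The Sobolev--Poincar\'e inequality used above,
\[
\Big(\fint_{B_{10R}(x_0)}|\phi|^{p^\ast}dx\Big)^{1/p^\ast}\le C R\Big(\fint_{B_{10R}(x_0)}|\nabla\phi|^{p}dx\Big)^{1/p}
\]
($p^\ast=np/(n-p)$ when $p<n$, and any fixed large exponent when $p=n$), holds \emph{without} an additive lower-order term precisely because $\phi$ vanishes on $B_{10R}(x_0)\setminus\Om$, a set of measure at least $4^{-n}|B_{10R}(x_0)|$ by the density estimate recorded just before Lemma \ref{RVHbdry}; this is the single point where Reifenberg flatness with $\delta<1/2$ is invoked, replacing the interior Poincar\'e inequality used for Lemma \ref{DM}. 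The main obstacle I anticipate is making Step 2 fully rigorous in the measure-data setting: one must legitimize $T_j(\phi)$ as a test function through the renormalized/truncation machinery (the measures $\mu_k$, their weak convergence, the passage $k\to\infty$), and, for $p<2$, carry the level-set iteration through the degenerate weight $(|\nabla u|^2+|\nabla w|^2)^{(p-2)/2}$ while preserving the exact form of $F$ — precisely the technical content of \cite{DM2} in the interior and of \cite{Ph4} at the boundary.
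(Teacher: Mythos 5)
The paper does not prove this lemma itself but cites \cite{Ph4}, whose argument is precisely the comparison scheme you outline (testing the difference of the two equations with truncations $T_j(u-w)$ and using \eqref{monotone}, then the Boccardo--Gallou\"et level-set iteration with Sobolev's inequality for $p\ge 2$, and the weighted Cauchy--Schwarz factorization with a Young-type absorption for $2-1/n<p<2$, which is exactly how the extra term in $F$ arises), so your proposal is correct and follows essentially the same route. One small simplification: since $u-w\in W^{1,\,p}_0(\Om_{10R}(x_0))$, its zero extension is a Sobolev function supported in $B_{10R}(x_0)$, so the Sobolev--Poincar\'e inequality you invoke needs no density estimate and no Reifenberg flatness at all --- the flatness is only needed for results like Lemma \ref{RVHbdry}, not for this comparison estimate.
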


With  $x_0\in\partial\Om$ and $0<R<R_0/10$ as above, we now set
$\rho= R(1-\delta)$ so that  $0<\rho/(1-\delta)<R_0/10$. 
From the definition of Reifenberg flat domains we deduce  that 
there exists a coordinate system  $\{z_{1}, z_{2}, \cdots, z_{n}\}$
with the origin $0 \in \Omega$ such that in this coordinate system $x_0=(0, \dots, 0, -\rho  \delta/(1-\delta))$ and
\[
B_{\rho}^{+}(0) \subset \Omega\cap B_{\rho}(0) \subset B_{\rho}(0)\cap \{z=(z_1, z_2, \dots, z_n):z_{n} > -2\rho \delta/(1-\delta)\}.
\]
Thus if $\delta <1/2$ we have 
\[
B_{\rho}^{+}(0) \subset \Omega\cap B_{\rho}(0) \subset B_{\rho}(0)\cap \{z=(z_1, z_2, \dots, z_n):z_{n} > -4\delta\rho\}.
\]
Here $B^+_{\rho}(0):= B_{\rho}(0) \cap \{z=(z_1, z_2, \dots, z_n):z_{n} >0\}$ denotes an upper half ball in the corresponding coordinate system.

We next define another function $v\in w+ W_{0}^{1,\,p}(\Om_{\rho}(0))$
as the unique solution to the Dirichlet problem
\begin{equation}\label{vapprox}
\left\{ \begin{array}{rcl}
 \text{div}~ \overline{\mathcal{A}}_{B_{\rho}(0)}(\nabla v)&=&0   \quad \text{in} ~\Om_{\rho}(0), \\ 
v&=&w  \quad \text{on}~ \partial \Om_{\rho}(0).
\end{array}\right.
\end{equation}

Then we set $v$ to be equal to $w$ outside  $\Om_{\rho}(0)$.
Similar to Lemma \ref{BMOapprox1} we have the following one.

\begin{lemma}\label{BMOapprox2}
Let $s=\frac{\theta p}{(\theta-1)(p-1)}$ for some $\theta\in(1,\theta_0]$, where $\theta_0>1$ is as in Lemma \ref{RVHbdry}. Let also 
$w$ and $v$ be as in \eqref{wapprox} and \eqref{vapprox}, respectively. Then there is a constant $C=C(n,p,\alpha,\beta)$ such that 
\begin{eqnarray*}
\lefteqn{\fint_{B_{\rho}(0)} |\nabla v-\nabla w|^pdx}\\
&& \leq C \left(\fint_{B_{\rho}(0)}\Upsilon(\mathcal{A}, B_{\rho}(0))(x)^{s}dx\right)^{\min\{p, \frac{p}{p-1}\}/s} \left(\fint_{B_{6\rho}(0)}|\nabla w| dx\right)^p. 
\end{eqnarray*}
\end{lemma}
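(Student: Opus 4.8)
The plan is to mirror the interior argument used for Lemma \ref{BMOapprox1}, now working on the half-ball geometry supplied by Reifenberg flatness. First I would note that both $w$ and $v$ vanish (in the $W^{1,p}_0$ sense) outside $\Om_\rho(0)$, so $v-w\in W^{1,p}_0(\Om_\rho(0))$ and we may test the two equations against $v-w$. Testing $\mathrm{div}\,\overline{\mathcal{A}}_{B_\rho(0)}(\nabla v)=0$ and $\mathrm{div}\,\mathcal{A}(x,\nabla w)=0$ with $v-w$ and subtracting gives
\[
\int_{\Om_\rho(0)}\langle \overline{\mathcal{A}}_{B_\rho(0)}(\nabla v)-\overline{\mathcal{A}}_{B_\rho(0)}(\nabla w),\nabla v-\nabla w\rangle\,dx=\int_{\Om_\rho(0)}\langle \mathcal{A}(x,\nabla w)-\overline{\mathcal{A}}_{B_\rho(0)}(\nabla w),\nabla v-\nabla w\rangle\,dx.
\]
Using the monotonicity \eqref{monotone} for $\overline{\mathcal{A}}_{B_\rho(0)}$ on the left (which inherits the structure constants) and the definition of $\Upsilon(\mathcal{A},B_\rho(0))$ to bound the integrand on the right by $\Upsilon(\mathcal{A},B_\rho(0))(x)\,|\nabla w|^{p-1}\,|\nabla v-\nabla w|$, one gets, after the usual splitting into the regions $p\geq 2$ and $2-1/n<p<2$ and an application of H\"older and Young inequalities, the estimate
\[
\fint_{B_\rho(0)}|\nabla v-\nabla w|^p\,dx\leq C\Big(\fint_{B_\rho(0)}\Upsilon(\mathcal{A},B_\rho(0))(x)^{s}\,dx\Big)^{\min\{p,\,p/(p-1)\}/s}\Big(\fint_{B_\rho(0)}|\nabla w|^{\theta p}\,dx\Big)^{1/\theta},
\]
where the exponent $s=\frac{\theta p}{(\theta-1)(p-1)}$ is exactly the H\"older conjugate arrangement that makes $\Upsilon\cdot|\nabla w|^{p-1}$ integrable to the correct power; this is where the specific value of $s$ enters.

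The second and final step is to replace the $L^{\theta p}$ average of $\nabla w$ on $B_\rho(0)$ by the $L^1$ average of $\nabla w$ on the larger ball $B_{6\rho}(0)$. Here I would invoke the boundary reverse H\"older inequality Lemma \ref{RVHbdry}: with $z=0$ and the radius chosen so that $B_{3\rho}(0)\subset B_{10R}(x_0)$ — which holds since $\rho=R(1-\delta)$ and hence $6\rho<10R$ — we obtain
\[
\Big(\fint_{B_\rho(0)}|\nabla w|^{\theta_0 p}\,dx\Big)^{1/(\theta_0 p)}\leq C\fint_{B_{3\rho}(0)}|\nabla w|\,dx\leq C\fint_{B_{6\rho}(0)}|\nabla w|\,dx,
\]
and for $\theta\in(1,\theta_0]$ the $L^{\theta p}$ average is controlled by the $L^{\theta_0 p}$ average via H\"older. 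Substituting this into the previous display yields exactly the claimed inequality. One has to be slightly careful that the reverse H\"older lemma is applied on concentric balls centered at $0$ (not at $x_0$), which is legitimate because the density estimate $|B_t(0)\cap(\RR^n\setminus\Om)|\geq c|B_t(0)|$ used to prove Lemma \ref{RVHbdry} is available at every boundary point and $0\in\partial\Om$ in the shifted picture up to a harmless $O(\delta\rho)$ translation; in fact since $0\in\Om$ one uses the nearby genuine boundary point, and this is precisely the setup already arranged in the paragraph preceding \eqref{vapprox}.

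The main obstacle, and the only genuinely delicate point, is the case $2-1/n<p<2$, where $\xi\mapsto|\xi|^{p-2}\xi$-type monotonicity degenerates: the quantity $(|\nabla v|^2+|\nabla w|^2)^{(p-2)/2}|\nabla v-\nabla w|^2$ coming from \eqref{monotone} must be converted into a bound on $\fint|\nabla v-\nabla w|^p$ by a further H\"older step that costs a factor of $\big(\fint(|\nabla v|+|\nabla w|)^p\big)^{(2-p)/2}$, and then $\nabla v$ must be reabsorbed using the energy comparison $\fint_{B_\rho(0)}|\nabla v|^p\le C\fint_{B_\rho(0)}|\nabla w|^p$ (itself a consequence of testing \eqref{vapprox} with $v-w$). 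This is exactly the bookkeeping carried out in the proofs of Lemma 3.9 and Lemma 3.10 of \cite{Ph3}, and since the Reifenberg-flat half-ball plays no role beyond providing Lemma \ref{RVHbdry} and the $W^{1,p}_0(\Om_\rho(0))$ test function, the argument there transfers essentially verbatim; for this reason I would, as the author does, state the lemma and refer to \cite{Ph3} for the routine details rather than reproduce them.
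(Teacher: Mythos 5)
Your argument is correct and is essentially the proof the paper has in mind (the paper itself simply defers to Lemmas 3.9--3.10 of \cite{Ph3}): test the difference of the two equations with $v-w\in W^{1,\,p}_0(\Om_\rho(0))$, use monotonicity and the definition of $\Upsilon$, apply H\"older/Young with the exponents dictated by $s=\frac{\theta p}{(\theta-1)(p-1)}$, and finish with the boundary reverse H\"older inequality of Lemma \ref{RVHbdry}. Two harmless slips: $w$ itself does not vanish outside $\Om_\rho(0)$ (only $v-w$ lies in $W^{1,\,p}_0(\Om_\rho(0))$, by the definition \eqref{vapprox}), and the reverse H\"older step should be applied with the lemma's radius equal to $2\rho$, which yields the $L^1$ average over $B_{6\rho}(0)$ directly rather than over $B_{3\rho}(0)$.
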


By standard interior regularity results  we see that the function $v$ in Lemma \ref{BMOapprox2} belongs to $W^{1, \, \infty}_{\rm loc}(\Om_{\rho}(0))$. However, as the boundary of $\Om$ can be very irregular  the $L^\infty$-norm of $\nabla v$ up to $\partial\Om\cap B_{\rho}(0)$ could be unbounded. For our purpose we need to consider another intermediate equation: 
\begin{eqnarray}\label{perturbedpdeboundary}
\left\{ \begin{array}{rcl}
 \text{div}~ \overline{\mathcal{A}}_{B_{\rho}(0)}(\nabla V)&=0 \quad \text{in} ~B^{+}_{\rho}(0), \\
V &= 0 \quad\text { on $T_{\rho}$},
\end{array}\right.
\end{eqnarray}
where $T_{\rho}$ is the flat portion of $\partial B^{+}_{\rho}(0)$.

By definition a function $V\in W^{1,\, p}(B^{+}_{\rho}(0))$ is a weak solution of \eqref{perturbedpdeboundary} if its zero extension to $B_{\rho}(0)$ belongs to 
 $W^{1,\, p}(B_{\rho}(0))$ and if $\text{div}~ \overline{\mathcal{A}}_{B_{\rho}(0)}(\nabla V)=0$ weakly in $B^{+}_{\rho}(0)$, i.e.,
\[
\int_{B^{+}_{\rho}(0)} \overline{\mathcal{A}}_{B_{\rho}(0)}(\nabla V)\cdot \nabla \varphi dx = 0
\]
for all $\varphi \in W_{0}^{1,\, p} (B^{+}_{\rho}(0))$.

We  have the following gradient $L^{\infty}$ estimate up to the boundary for $V$.
\begin{lemma}[\cite{L}] \label{w1infty}
For any weak solution $V \in W^{1,\, p}(B^{+}_{\rho}(0))$ of (\ref{perturbedpdeboundary}), 
we have
\[
\|\nabla V\|_{L^{\infty}(B^+_{\rho'/2}(0))}^{p} \leq C\fint_{B^+_{\rho'}(0)} |\nabla V|^{p}dx, \qquad 0<\rho'\leq \rho,
\]
for some universal constant $C >0$. Moreover, $\nabla V$ is  continuous up to $T_{\rho}$. 
\end{lemma}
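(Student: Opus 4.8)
This is essentially Lieberman's boundary gradient estimate for degenerate quasilinear equations \cite{L}; since here the nonlinearity $\overline{\mathcal A}:=\overline{\mathcal A}_{B_\rho(0)}$ does not depend on $x$ and the relevant part of $\partial B^+_\rho(0)$ is flat, the argument is somewhat simpler, and the plan is as follows. First I would reduce to the case $\rho'=1$ by the rescaling $x\mapsto V(\rho' x)/\rho'$, which preserves the equation $\mathrm{div}\,\overline{\mathcal A}(\nabla\cdot)=0$, the vanishing on $T_1$, and the structure constants in \eqref{sublinear}--\eqref{ellipticity}; it then suffices to prove $\|\nabla V\|_{L^\infty(B^+_{1/2}(0))}^p\le C\fint_{B^+_1(0)}|\nabla V|^p\,dx$ and that $\nabla V$ is continuous up to $T_1$. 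Two elementary facts will be used repeatedly: every linear function $x\mapsto c\,x_n$ is a weak solution of $\mathrm{div}\,\overline{\mathcal A}(\nabla\cdot)=0$ that vanishes on $T_1$ (this is where the $x$-independence of $\overline{\mathcal A}$ is used), and $\mathrm{div}\,\overline{\mathcal A}(\nabla\cdot)$ obeys the comparison principle owing to the strict monotonicity \eqref{monotone}.

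The next step is an $L^\infty$ bound for $V$. Using $(V-k)^+\eta^p$ and $(V+k)^-\eta^p$ with $k\ge 0$ as test functions — admissible since $V=0$ on $T_1$ forces them to vanish there — one gets the usual Caccioppoli inequalities on half-balls centered on $T_1$, and a De Giorgi iteration then yields $\|V\|_{L^\infty(B^+_{7/8})}\le C\big(\fint_{B^+_1}|V|^p\big)^{1/p}$; combined with the Poincar\'e inequality $\fint_{B^+_1}|V|^p\le C\fint_{B^+_1}|\nabla V|^p$ (integrate along the segments normal to $T_1$, on which $V$ vanishes), this gives $\|V\|_{L^\infty(B^+_{7/8})}\le C M_0$ with $M_0:=\big(\fint_{B^+_1}|\nabla V|^p\big)^{1/p}$. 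The heart of the matter is then a \emph{boundary Lipschitz estimate}: $|V(x)|\le CM_0\,x_n$ for $x\in B^+_{3/4}$. For this I would first observe that the same De Giorgi iteration gives $V\in C^{0,\alpha}$ up to $T_1$, with $V=0$ on $T_1$ and H\"older seminorm $\le CM_0$, so that $|V(x)|\le CM_0\,x_n^{\alpha}$ near $T_1$; and then, following \cite{L}, compare $V$ in $B^+_{7/8}$ against sub- and supersolutions of $\mathrm{div}\,\overline{\mathcal A}(\nabla\cdot)=0$ that vanish on $T_{7/8}$, are linear in $x_n$ in the bulk, and are corrected near the rim $\{x_n=0,\ |x|=\tfrac78\}$ of the half-ball by means of the H\"older bound; the comparison principle then upgrades the $x_n^{\alpha}$-decay to the linear decay $CM_0\,x_n$ on the interior sub-ball $B^+_{3/4}$.

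Granting the boundary Lipschitz estimate, the conclusion is routine: for $y\in B^+_{1/2}$ put $d=y_n$, so that $B_{d/2}(y)\subset B^+_{3/4}$; then the interior gradient estimate (Proposition 3.3 in \cite{DiB}), the Caccioppoli inequality on $B_{d/4}(y)$ with outer ball $B_{d/2}(y)$, and the bound $|V|\le CM_0\,d$ on $B_{d/2}(y)$ from the Lipschitz estimate give $|\nabla V(y)|^p\le C\fint_{B_{d/4}(y)}|\nabla V|^p\le C\,d^{-p}\fint_{B_{d/2}(y)}|V|^p\le CM_0^p$, and taking the supremum over $y$ proves the asserted bound. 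The continuity of $\nabla V$ up to $T_1$ follows from the interior $C^{1,\alpha}$ theory of \cite{DiB} away from $T_1$ and from Lieberman's boundary $C^{1,\alpha}$ estimate \cite{L}, whose proof refines the scheme above into an oscillation-decay iteration for $\nabla V$ that treats separately the degenerate regime, where $\mathrm{osc}_{B^+_r}\nabla V$ is automatically small, and the nondegenerate regime, where the equation is uniformly elliptic on the relevant scale, the tangential derivatives vanish on $T_1$ and can be reflected oddly across it, and the normal derivative is recovered from the equation. I expect this boundary Lipschitz estimate — equivalently, carrying the DiBenedetto--Tolksdorf--Uhlenbeck dichotomy uniformly up to the flat boundary — to be the main obstacle, since the equation degenerates on the set $\{\nabla V=0\}$ and hence the linearized equation for the derivatives is only degenerate elliptic; it is the zero Dirichlet datum on $T_1$ together with the availability of the linear barriers above that make the argument work.
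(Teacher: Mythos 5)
The paper does not prove Lemma \ref{w1infty}: it is quoted directly from Lieberman \cite{L}, so there is no internal argument to compare against. Your sketch (rescaling to $\rho'=1$, De Giorgi bounds up to the flat boundary, linear barriers plus the comparison principle for the boundary Lipschitz estimate, interior estimates of \cite{DiB} at scale comparable to the distance to $T_1$, and the oscillation-decay argument for continuity of $\nabla V$) is consistent with the cited reference and correct in outline, and it ultimately defers the genuinely hard step---the boundary Lipschitz/$C^{1,\alpha}$ estimate for the degenerate equation---to \cite{L}, which is exactly what the paper itself does.
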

We also need the following result concerning the zero extensions of solutions of  (\ref{perturbedpdeboundary}) to $B_\rho(0)$. 

\begin{lemma} \label{Vextension}
If $V \in W^{1,\, p}(B^{+}_{\rho}(0))$ is a weak solution of  (\ref{perturbedpdeboundary}), then its zero extension from $B^+_\rho(0)$ to 
$B_\rho(0)$ solves
\begin{equation*}
 {\rm div}~ \overline{\mathcal{A}}_{B_{\rho}(0)}(\nabla V)= \frac{\partial f}{\partial x_n}, 
\end{equation*}
weakly in $B_{\rho}(0)$, where for  $x=(x', x_n)\in B_\rho(0)$,
\begin{equation*}
\overline{\mathcal{A}}_{B_{\rho}}=\left(\overline{\mathcal{A}}^{1}_{B_{\rho}}, \dots, \overline{\mathcal{A}}^{n}_{B_{\rho}}\right), \quad  and~ 
f(x)=-\chi_{\{x_n<0\}} \overline{\mathcal{A}}^{n}_{B_{\rho}}\left(\nabla V(x', 0)\right).
\end{equation*} 
\end{lemma}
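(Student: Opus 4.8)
The plan is to verify the distributional equation directly against test functions and reduce everything to a flux identity on the flat part $T_\rho$. Fix $\varphi\in C_0^\infty(B_\rho(0))$; what has to be shown is
\begin{equation*}
\int_{B_\rho(0)}\overline{\mathcal{A}}_{B_\rho}(\nabla V)\cdot\nabla\varphi\,dx=\int_{B_\rho(0)}f\,\partial_{x_n}\varphi\,dx .
\end{equation*}
Since the zero extension of $V$ has $\nabla V=0$ a.e.\ on $B_\rho^-(0):=B_\rho(0)\cap\{x_n<0\}$ and $\overline{\mathcal{A}}_{B_\rho}(0)=0$ (recall $\mathcal{A}(x,0)=0$ a.e., hence so is its average), the left-hand side equals $\int_{B_\rho^+(0)}\overline{\mathcal{A}}_{B_\rho}(\nabla V)\cdot\nabla\varphi\,dx$. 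For the right-hand side, $f$ is supported in $\{x_n\le0\}$ and there coincides with $-\overline{\mathcal{A}}^{n}_{B_\rho}(\nabla V(x',0))$, which is independent of $x_n$; so Fubini's theorem, the fundamental theorem of calculus in the $x_n$-variable, and the vanishing of $\varphi$ on the lower spherical cap $\partial B_\rho(0)\cap\{x_n<0\}$ collapse the right-hand side to $-\int_{T_\rho}\overline{\mathcal{A}}^{n}_{B_\rho}(\nabla V(x',0))\,\varphi(x',0)\,dx'$. All these integrals are finite: $\overline{\mathcal{A}}_{B_\rho}(\nabla V)\in L^{p/(p-1)}$ by the growth bound $|\overline{\mathcal{A}}_{B_\rho}(\xi)|\le\beta|\xi|^{p-1}$ together with $\nabla V\in L^p$, and the boundary trace $\nabla V(x',0)$ is well defined and bounded on $\mathrm{supp}\,\varphi\cap T_\rho$ because $\mathrm{supp}\,\varphi\Subset B_\rho(0)$ and $\nabla V$ is continuous up to $T_\rho$ by Lemma \ref{w1infty}.

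It therefore remains to establish the Gauss--Green type identity
\begin{equation*}
\int_{B_\rho^+(0)}\overline{\mathcal{A}}_{B_\rho}(\nabla V)\cdot\nabla\varphi\,dx=-\int_{T_\rho}\overline{\mathcal{A}}^{n}_{B_\rho}(\nabla V(x',0))\,\varphi(x',0)\,dx' .
\end{equation*}
The function $\varphi|_{B_\rho^+(0)}$ vanishes on the spherical part of $\partial B_\rho^+(0)$ but not on $T_\rho$, so it is not directly admissible in the weak formulation of \eqref{perturbedpdeboundary}; the identity asserts that testing \eqref{perturbedpdeboundary} against $\varphi$ produces exactly the normal flux of $\overline{\mathcal{A}}_{B_\rho}(\nabla V)$ through $T_\rho$. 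To make this rigorous I would introduce a one-dimensional cutoff $\eta_\epsilon(x):=\eta(x_n/\epsilon)$, where $\eta\in C^\infty(\RR)$ with $\eta\equiv0$ on $(-\infty,1]$ and $\eta\equiv1$ on $[2,\infty)$, so that $\varphi\,\eta_\epsilon\in W_0^{1,p}(B_\rho^+(0))$; inserting it into \eqref{perturbedpdeboundary} and expanding $\nabla(\varphi\eta_\epsilon)=\eta_\epsilon\nabla\varphi+\tfrac{1}{\epsilon}\eta'(x_n/\epsilon)\,\varphi\,e_n$ gives
\begin{equation*}
\int_{B_\rho^+(0)}\eta_\epsilon\,\overline{\mathcal{A}}_{B_\rho}(\nabla V)\cdot\nabla\varphi\,dx=-\int_{B_\rho^+(0)}\tfrac{1}{\epsilon}\,\eta'(x_n/\epsilon)\,\varphi\,\overline{\mathcal{A}}^{n}_{B_\rho}(\nabla V)\,dx .
\end{equation*}
As $\epsilon\to0^+$ the left side tends to $\int_{B_\rho^+(0)}\overline{\mathcal{A}}_{B_\rho}(\nabla V)\cdot\nabla\varphi\,dx$ by dominated convergence, while on the right side the substitution $x_n=\epsilon t$ reduces the inner ($x_n$-)integral to $\int_1^2\varphi(x',\epsilon t)\,\overline{\mathcal{A}}^{n}_{B_\rho}(\nabla V(x',\epsilon t))\,\eta'(t)\,dt$, which converges (using that $\overline{\mathcal{A}}_{B_\rho}$ is continuous, that $\nabla V$ is continuous up to $T_\rho$, and that $\int_1^2\eta'(t)\,dt=1$) to $\varphi(x',0)\,\overline{\mathcal{A}}^{n}_{B_\rho}(\nabla V(x',0))$ uniformly in $x'$ on $\mathrm{supp}\,\varphi$; integrating in $x'$ produces the desired right-hand side $-\int_{T_\rho}\overline{\mathcal{A}}^{n}_{B_\rho}(\nabla V(x',0))\,\varphi(x',0)\,dx'$. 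Combining the two displays finishes the argument, and the extension from $C_0^\infty(B_\rho(0))$ to general $\varphi\in W_0^{1,p}(B_\rho(0))$ (compactly supported) is immediate from the integrability bounds above.

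The only genuinely delicate point is this last limit passage in the boundary term, and it is exactly here that the up-to-the-boundary gradient regularity of $V$ furnished by Lemma \ref{w1infty} is indispensable: without the continuity of $\nabla V$ on $B_\rho^+(0)\cup T_\rho$ the trace $\nabla V(x',0)$, hence $f$ itself, would not be meaningful, and the slices $\overline{\mathcal{A}}^{n}_{B_\rho}(\nabla V(\cdot,\epsilon t))$ need not converge. Everything else — the reduction of both sides to integrals over $T_\rho$, the integrability bookkeeping, and the cutoff manipulation — is routine.
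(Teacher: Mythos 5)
Your proof is correct and follows essentially the same route as the paper: testing the weak formulation on $B^+_\rho(0)$ against $\varphi$ multiplied by a one-dimensional cutoff $\eta(x_n/\epsilon)$, passing to the limit $\epsilon\to0^+$ (using the continuity of $\nabla V$ up to $T_\rho$ from Lemma \ref{w1infty}) to identify the flux term $-\int_{T_\rho}\overline{\mathcal{A}}^{n}_{B_\rho}(\nabla V(x',0))\,\varphi(x',0)\,dx'$, and converting between that surface integral and the volume integral $\int f\,\partial_{x_n}\varphi$ by the fundamental theorem of calculus in $x_n$. The only difference is organizational (you work backwards from the target identity while the paper integrates by parts at the end), which is immaterial.
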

\begin{proof}
Let $h$ be a $C^\infty$ function on the real line such that $h=0$ on $(-\infty, 1/2)$ and $h=1$ on $(1, \infty)$.  
Set $B_\rho=B_\rho(0)$. Then for any $\varphi\in C^\infty_0(B_\rho)$ and $\epsilon>0$ we see that the function $x=(x', x_n)\rightarrow h(x_n/\epsilon)\varphi(x)$ belongs to $C^\infty_0(B^{+}_\rho)$.
Thus we get
$$\int_{B^{+}_\rho}\overline{\mathcal{A}}_{B_{\rho}}(\nabla V)\cdot\nabla [h(x_n/\epsilon)\varphi(x)]dx=0,$$
which yields
\begin{eqnarray*}
\lefteqn{\int_{B^{+}_\rho}\overline{\mathcal{A}}_{B_{\rho}}(\nabla V)\cdot \nabla \varphi  h(x_n/\epsilon) dx}\\
&=& -\int_{B^{+}_\rho}\overline{\mathcal{A}}^{n}_{B_{\rho}}(\nabla V) h'(x_n/\epsilon)\epsilon^{-1} \varphi dx\\
&=& -\int_{B^+_\rho}\overline{\mathcal{A}}^{n}_{B_{\rho}}(\nabla V(x', x_n)) h'(x_n/\epsilon) \epsilon^{-1} \varphi(x',x_n) dx' dx_n \\
&=& - \int_{0}^{\rho} \int_{|x'|< \sqrt{\rho^2-x_n^2}}\overline{\mathcal{A}}^{n}_{B_{\rho}}(\nabla V(x', x_n))  \varphi(x',x_n) dx'\, h'(x_n/\epsilon) \epsilon^{-1} dx_n.
\end{eqnarray*}

Now letting $\epsilon\rightarrow 0^+$   we obtain
\begin{eqnarray*}
\int_{B^+_\rho}\overline{\mathcal{A}}_{B_{\rho}}(\nabla V)\cdot \nabla \varphi dx &=& -\int_{\{|x'|<\rho\}}
\overline{\mathcal{A}}^{n}_{B_{\rho}}(\nabla V(x', 0))  \varphi(x',0) dx'\\
&=&-\int_{B^-_\rho} \overline{\mathcal{A}}^{n}_{B_{\rho}}(\nabla V(x', 0))  \frac{\partial \varphi}{\partial x_n}(x) dx,
\end{eqnarray*}
where the last equality follows from integrating by parts.
This gives the desired result since by \eqref{sublinear} and the fact that $\nabla V=0$ outside $B^+_6$ it holds that 
$$\int_{B^+_\rho}\overline{\mathcal{A}}_{B_{\rho}}(\nabla V)\cdot \nabla \varphi dx=\int_{B_\rho}\overline{\mathcal{A}}_{B_{\rho}}(\nabla V)\cdot \nabla \varphi dx.$$
\end{proof}

We now consider a scaled version of equation \eqref{vapprox}:
\begin{equation}\label{scaledvto1}
\left\{ \begin{array}{rcl}
 {\rm div}~ \overline{\mathcal{A}}_{B_{1}(0)}(\nabla v)&=&0   \quad \text{in} ~\Om_{1}(0), \\ 
v&=&0  \quad \text{on}~ \partial\Om\cap B_{1}(0),
\end{array}\right.
\end{equation}
under the geometric setting
\begin{equation}\label{geometric1}
 B_{1}^{+}(0)\subset \Om_1(0) \subset B_{1}(0)\cap \{ x_{n} > -4 \delta \}.
\end{equation}

As above a function $v\in W^{1,\, p}(\Om_{1}(0))$ is a solution of \eqref{scaledvto1} if its zero extension to $B_1(0)$ belongs to $v\in W^{1,\, p}(B_{1}(0))$ and 
${\rm div}~ \overline{\mathcal{A}}_{B_{1}(0)}(\nabla v)=0$ weakly in $\Om_{1}(0)$.

We now continue with the following important perturbation result. Earlier results of this kind can be found in \cite{BW1, BW2}.

\begin{lemma}\label{comparisonboundary0} 
Suppose that $\mathcal{A}: \RR^n\times\RR^n\rightarrow \RR^n$ satisfies \eqref{sublinear}-\eqref{monotone}.
For any $\epsilon > 0$ there exists a small $\delta = \delta(n, p, \alpha, \beta, \epsilon) > 0$ such that  if $v \in W^{1,\, p}(\Om_{1}(0))$ is a  solution of \eqref{scaledvto1}  along with
the geometric setting  \eqref{geometric1} and the bound
$$ \fint_{B_{1}(0)} |\nabla v|^{p}dx \leq 1, $$
then there exists a weak solution $V\in W^{1,\, p}(B^{+}_{1}(0))$ of (\ref{perturbedpdeboundary}) with $\rho=1$, whose  zero extension to $B_{1}(0)$  satisfies
 \[\fint_{B_{1}(0)}| v -  V|^{p}dx \leq \epsilon ^{p}.
\]
\end{lemma}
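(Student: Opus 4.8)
The plan is to argue by contradiction via a compactness (normal families) argument, which is the standard way such perturbation lemmas are proved (cf. \cite{BW1, BW2}). Suppose the conclusion fails. Then there is an $\epsilon_0>0$, a sequence $\delta_k\to 0^+$, domains $\Om^{(k)}$ with $B_1^+(0)\subset\Om_1^{(k)}(0)\subset B_1(0)\cap\{x_n>-4\delta_k\}$, nonlinearities $\mathcal{A}_k$ satisfying \eqref{sublinear}--\eqref{monotone} with the same structural constants $\alpha,\beta$, and weak solutions $v_k\in W^{1,p}(\Om_1^{(k)}(0))$ of $\mathrm{div}\,\overline{\mathcal{A}_k}_{B_1(0)}(\nabla v_k)=0$ in $\Om_1^{(k)}(0)$, $v_k=0$ on $\partial\Om^{(k)}\cap B_1(0)$, with zero extensions satisfying $\fint_{B_1(0)}|\nabla v_k|^p\,dx\le1$, but such that for every weak solution $V\in W^{1,p}(B_1^+(0))$ of \eqref{perturbedpdeboundary} (with $\rho=1$ and the averaged operator $\overline{\mathcal{A}_k}_{B_1(0)}$) one has $\fint_{B_1(0)}|v_k-V|^p\,dx>\epsilon_0^p$.

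First I would extract convergent subsequences. Since $\|\nabla v_k\|_{L^p(B_1(0))}$ is bounded and $v_k$ vanishes on a set $\{x_n\le-4\delta_k\}\cap B_1(0)$ of measure bounded below, Poincar\'e's inequality gives a uniform bound on $\|v_k\|_{W^{1,p}(B_1(0))}$; hence, passing to a subsequence, $v_k\rightharpoonup v$ in $W^{1,p}(B_1(0))$ and $v_k\to v$ strongly in $L^p(B_1(0))$ and a.e. The averaged coefficient vector fields $\overline{\mathcal{A}_k}_{B_1(0)}(\xi)$ are, for each fixed $\xi$, uniformly bounded by $\beta|\xi|^{p-1}$ and, by \eqref{ellipticity}, equi-Lipschitz away from the origin on compact $\xi$-sets; by Arzel\`a--Ascoli (diagonalizing over a countable dense set of $\xi$) we may assume $\overline{\mathcal{A}_k}_{B_1(0)}\to\mathcal{A}_0$ locally uniformly on $\RR^n\setminus\{0\}$, where the limiting $\mathcal{A}_0$ again satisfies \eqref{sublinear}--\eqref{monotone} with the same $\alpha,\beta$ and is independent of $x$. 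Because $\{x_n>-4\delta_k\}$ shrinks to $\{x_n\ge0\}$ and $v_k=0$ on the complement, the weak limit $v$ satisfies $v=0$ a.e. on $\{x_n<0\}\cap B_1(0)$, so $v|_{B_1^+(0)}\in W^{1,p}(B_1^+(0))$ has zero extension in $W^{1,p}(B_1(0))$; thus $v$ is an admissible candidate for $V$ once we know it solves the right equation.

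The heart of the argument, and the step I expect to be the main obstacle, is passing to the limit in the weak formulation to show that $v$ is a weak solution of $\mathrm{div}\,\mathcal{A}_0(\nabla v)=0$ in $B_1^+(0)$, i.e. of \eqref{perturbedpdeboundary} with the limiting (still $x$-independent) operator. The nonlinearity of $\mathcal{A}_k$ in $\nabla v_k$ means weak convergence of gradients is not enough; I would upgrade to strong $L^p$ convergence of $\nabla v_k$ on compact subsets of $B_1^+(0)$ by the classical monotonicity/Minty device: test the equations for $v_k$ with $\varphi_k=\zeta^p(v_k-v)$ for a cutoff $\zeta\in C_0^\infty(B_1^+(0))$, use the strict monotonicity \eqref{monotone} together with $v_k\to v$ in $L^p$ and the uniform gradient bound to deduce $\int_{B_1^+(0)}\zeta^p(|\nabla v_k|^2+|\nabla v|^2)^{\frac{p-2}{2}}|\nabla v_k-\nabla v|^2\,dx\to0$, and hence $\nabla v_k\to\nabla v$ in $L^p_{\rm loc}(B_1^+(0))$ (splitting into the regions $p\ge2$ and $2-1/n<p<2$ as in \eqref{monotone}). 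A small subtlety is that the operators themselves vary with $k$: one handles this by writing $\overline{\mathcal{A}_k}_{B_1(0)}(\nabla v_k)-\mathcal{A}_0(\nabla v_k)\to0$ in $L^{p'}_{\rm loc}$ from the locally uniform convergence $\overline{\mathcal{A}_k}_{B_1(0)}\to\mathcal{A}_0$ and the gradient bound, and uses that the $v_k$-equation against $\varphi_k$ differs from the frozen-operator version by this negligible term. With strong convergence of $\nabla v_k$ in hand, $\overline{\mathcal{A}_k}_{B_1(0)}(\nabla v_k)\to\mathcal{A}_0(\nabla v)$ in $L^{p'}_{\rm loc}(B_1^+(0))$ (using continuity of $\mathcal{A}_0$ and a.e. convergence plus equi-integrability), so passing to the limit in $\int_{B_1^+(0)}\overline{\mathcal{A}_k}_{B_1(0)}(\nabla v_k)\cdot\nabla\varphi\,dx=0$ for all $\varphi\in C_0^\infty(B_1^+(0))$ gives that $v$ is the desired $V$.

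Finally I would derive the contradiction. Taking $V=v$ in the (negated) conclusion, strong $L^p(B_1(0))$ convergence $v_k\to v$ forces $\fint_{B_1(0)}|v_k-v|^p\,dx\to0$, contradicting $\fint_{B_1(0)}|v_k-V|^p\,dx>\epsilon_0^p$ for all $k$ once we check $v$ is a legitimate choice — which it is, being a weak solution of \eqref{perturbedpdeboundary} with the $k$-th averaged operator only in the sense of the limiting operator $\mathcal{A}_0$. To be fully rigorous about the latter point (the conclusion asks for $V$ solving \eqref{perturbedpdeboundary} with the actual operator $\overline{\mathcal{A}_k}_{B_1(0)}$, not the limit $\mathcal{A}_0$), I would instead run the contradiction with a fixed index: given the limit solution $v$, solve \eqref{perturbedpdeboundary} with operator $\overline{\mathcal{A}_k}_{B_1(0)}$ and boundary data $v|_{T_1}$ to get $V_k$, show by the same monotonicity estimate and the uniform $L^\infty$ bound of Lemma \ref{w1infty} that $V_k\to v$ in $L^p(B_1(0))$, and conclude $\fint_{B_1(0)}|v_k-V_k|^p\,dx\le 2^{p-1}(\fint|v_k-v|^p+\fint|v-V_k|^p)\to0$, the contradiction. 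The rescaling back from $B_1(0)$ to $B_\rho(0)$ at the end is a routine change of variables, since \eqref{vapprox} and \eqref{scaledvto1} are related by the dilation $x\mapsto \rho x$ under which the structural constants and the $L^p$-averaged quantities are invariant.
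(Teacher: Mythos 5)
Your proposal is correct and follows the same overall strategy as the paper: a contradiction/compactness argument with sequences $\mathcal{A}_k$, $\Om^k$, $v_k$, locally uniform convergence of the averaged nonlinearities to an $x$-independent limit $\mathcal{A}_0$ satisfying \eqref{sublinear} and \eqref{monotone}, identification of the limit $v$ as a solution of the flat problem, and then the choice of $V_k$ as the solution of \eqref{perturbedpdeboundary} for the operator $\overline{\mathcal{A}_k}_{B_1}$ with boundary data $v$ on $\partial B_1^+(0)$ (your phrase ``boundary data $v|_{T_1}$'' should read: $V_k\in v+W_0^{1,\,p}(B_1^+(0))$, so that $V_k=0$ on $T_1$ automatically since $v$ vanishes on $\{x_n<0\}$). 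The two places where your execution differs are worth noting. First, to identify the limit equation you upgrade to strong $L^p_{\rm loc}(B_1^+)$ convergence of $\nabla v_k$ by testing the $k$-th equation with $\zeta^p(v_k-v)$ and using \eqref{monotone} (with the H\"older splitting when $2-1/n<p<2$), whereas the paper never claims strong gradient convergence: it takes the weak $L^{p/(p-1)}$ limit ${\bf A}$ of the fluxes $\overline{\mathcal{A}_k}_{B_1}(\nabla v_k)$ and runs Minty's trick ($g=v_0-\gamma\varphi$, $\gamma\to0$) to show ${\bf A}$ acts like $\mathcal{A}_0(\nabla v_0)$ in the weak formulation; both devices are standard and your version, while requiring the extra care you indicate about replacing $\overline{\mathcal{A}_k}_{B_1}(\nabla v)$ by $\mathcal{A}_0(\nabla v)$ in $L^{p'}$, is perfectly adequate. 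Second, to get $V_k\to v$ in $L^p$ the paper re-runs the compactness-plus-Minty argument on the sequence $\{V_k\}$ and invokes uniqueness for the limit problem (available from \eqref{monotone}) to conclude $V_0=v_0$; your direct route--testing with $V_k-v\in W_0^{1,\,p}(B_1^+)$, using that $v$ solves the $\mathcal{A}_0$-equation and that $\overline{\mathcal{A}_k}_{B_1}(\nabla v)\to\mathcal{A}_0(\nabla v)$ strongly in $L^{p'}$, and then applying \eqref{monotone}--is more quantitative and avoids a second compactness step (the appeal to Lemma \ref{w1infty} there is not actually needed). The closing remark about rescaling to $B_\rho(0)$ belongs to Theorem \ref{compareVLinfty} rather than to this lemma, but that is harmless.
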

\begin{proof} As in \cite{BW1, BW2}, we shall argue by contradiction. To this end, suppose that the conclusion were false. Then there would exist an $\epsilon_0>0$, a sequence of nonlinearities $\{\mathcal{A}_{k}\}_{k=1}^{\infty}$ satisfying
\eqref{sublinear}-\eqref{monotone}, a sequence of domains $\{\Om^k\}_{k=1}^{\infty}$,
and  a sequence of functions $\{v_k\}_{k=1}^{\infty}\subset W^{1,\, p}(\Om^{k}_1)$  such that
\begin{equation}\label{k-Ball-1}
B_{1}^{+}\subset \Omega^{k}_{ 1}\subset B_{1}\cap \{ x_{n} > -1/2k \},
\end{equation}
\begin{equation}\label{k-approx}
 \text{div}~ \overline{\mathcal{A}_{k}}_{B_{1}}(\nabla v_k)=0   \quad \text{in} ~\Om^{k}_{1}, 
\end{equation}
and the zero extension of each $v_k$ to $B_1$ satisfies
\begin{equation}\label{k-w-1}
\fint_{B_{1}} |\nabla v_k|^{p}dx \leq 1,
\end{equation}
but
\begin{equation}\label{k-contra}
\fint_{B_1}| v_k -  V_k|^{p}dx \geq \epsilon_{0}^{p}
\end{equation}
for any weak solution $V_k$ of 
\begin{eqnarray}\label{k-flatV}
\left\{ \begin{array}{rcl}
 \text{div}~ \overline{\mathcal{A}_{k}}_{B_{1}}(\nabla V_k)&=0 \quad \text{in} ~B^{+}_{1}, \\
V_k &= 0 \quad\text { on $T_{1}$}.
\end{array}\right.
\end{eqnarray}

By \eqref{k-Ball-1},  \eqref{k-w-1} and Poincar\'e's inequality  it follows that 
$$\norm{v_k}_{W^{1,\, p}(B_{1})}\leq C\norm{\nabla v_k}_{L^p(B_1)}\leq C.$$

Thus there exist $v_0\in W^{1,\, p}(B_{1})$ and a subsequence, still denoted by $\{v_k\}$, such that
\begin{equation}\label{strongL}
v_k \rightarrow v_0 \quad {\rm weakly~in~}W^{1,\, p}(B_{1}),
~{\rm and}~ {\rm strongly ~in~}L^{p}(B_{1}).
\end{equation}
Moreover, as $v_k= 0$ on $B_1 \setminus \Om^{k}_1$  it follows from \eqref{k-Ball-1} that $v_0=0$ in $B^{-}_{1}:=\{x\in B_1: x_n < 0\}$.

By \eqref{sublinear} and the second inequality in \eqref{ellipticity} the sequence $\{\overline{\mathcal{A}_{k}}_{B_1}\}$ is locally uniformly Lipschitz in $\RR^n\setminus\{0\}$ 
and thus by taking another subsequence if necessary we see that 
$\{\overline{\mathcal{A}_{k}}_{B_1}\}$ converges  locally uniformly in $\RR^n\setminus\{0\}$ to a nonlinearity 
$\mathcal{A}_{0}$ that also satisfies \eqref{sublinear} and \eqref{monotone}. Moreover, we may also define $\mathcal{A}_{0}(0)=0\in \RR^n$ without violating 
\eqref{sublinear} and \eqref{monotone}.

We next claim that $v_0$ solves  
\begin{equation*}
\left\{ \begin{array}{rcl}
 \text{div}~ \mathcal{A}_{0}(\nabla v_0)&=&0   \quad \text{in} ~B^{+}_{1}, \\ 
v_0&=&0  \quad \text{on}~ T_{1}.
\end{array}\right.
\end{equation*}

To verify this  we  observe that by \eqref{k-approx} and \eqref{k-Ball-1} one has
\begin{equation}\label{from-k}
\int_{B^+_1} \overline{\mathcal{A}_k}_{B_{1}}(\nabla v_k)\cdot \nabla \varphi dx=0
\end{equation}
for all $\varphi\in W_{0}^{1,\,p}(B^{+}_1)$. Moreover, in view of \eqref{sublinear} we have
$$\int_{B^+_1}|\overline{\mathcal{A}_{k}}_{B_{1}}(\nabla v_k)|^{\frac{p}{p-1}} dx \leq C \int_{B^+_1}|\nabla v_k|^p dx \leq C.$$

Thus there exists a subsequence, still denoted by $\{\overline{\mathcal{A}_k}_{B_{1}}(\nabla v_k)\}$, and a vector field
${\bf A}$ belonging to  $L^{\frac{p}{p-1}}(B^+_1, \RR^n)$ such that 
\begin{equation}\label{weakL}
\overline{\mathcal{A}_k}_{B_{1}}(\nabla v_k)\rightarrow {\bf A}
\quad {\rm weakly~ in}~ L^{\frac{p}{p-1}}(B^+_1, \RR^n). 
\end{equation}

This and \eqref{from-k} give 
\begin{equation}\label{A-equa}
\int_{B^+_1} {\bf A}\cdot \nabla \varphi dx=0
\end{equation}
for all $\varphi\in W_{0}^{1,\,p}(B^{+}_1)$. Therefore it is enough to show that 
\begin{equation}\label{desire}
\int_{B^+_1} [\mathcal{A}_{0}(\nabla v_0)-{\bf A}]\cdot \nabla \varphi dx =0, \qquad \forall \varphi\in W_{0}^{1,\,p}(B^{+}_1).
\end{equation}

For this goal we fix a function $g\in W^{1,\, p}(B^+_1)$ and any $\phi\in C^{\infty}_{0}(B^{+}_{1})$ such that $\phi\geq 0$. By \eqref{monotone} we have 
\begin{eqnarray*}
0&\leq& \int_{B^+_1}\phi [\overline{\mathcal{A}_k}_{B_{1}}(\nabla v_k)-\overline{\mathcal{A}_k}_{B_{1}}(\nabla g)]\cdot (\nabla v_k-\nabla g) dx\\
&=& \int_{B^+_1}\phi \overline{\mathcal{A}_k}_{B_{1}}(\nabla v_k)\cdot \nabla v_k dx- \int_{B^+_1}\phi \overline{\mathcal{A}_k}_{B_{1}}(\nabla v_k)\cdot \nabla g dx\\
&& - \int_{B^+_1}\phi \overline{\mathcal{A}_k}_{B_{1}}(\nabla g)\cdot (\nabla v_k-\nabla g) dx\\
&=:& I_{1}-I_2-I_3.
\end{eqnarray*}

In view of \eqref{from-k} we have 
$$I_1= \int_{B^+_1}\phi \overline{\mathcal{A}_k}_{B_{1}}(\nabla v_k)\cdot \nabla v_k dx=- \int_{B^+_1} v_k \overline{\mathcal{A}_k}_{B_{1}}(\nabla v_k)\cdot \nabla \phi dx,$$
which by \eqref{strongL}, \eqref{weakL} and \eqref{A-equa} yields
$$I_1\rightarrow - \int_{B^+_1} v_0 {\bf A}\cdot \nabla \phi dx= \int_{B^+_1} \phi {\bf A}\cdot \nabla v_0 dx, \quad {\rm as}~ k\rightarrow\infty.$$

Also, using \eqref{weakL} we find
$$I_2 \rightarrow \int_{B^+_1} \phi {\bf A}\cdot \nabla g dx.$$

To treat the term $I_3$ we observe that the sequence $\overline{\mathcal{A}_k}_{B_{1}}(\nabla g)$ converges to $\mathcal{A}_{0}(\nabla g)$ pointwise a.e. in $B^+_1$ and 
hence strongly in $L^{\frac{p}{p-1}}(B^+_1)$ by \eqref{sublinear} and Lebesgue's Dominated Convergence Theorem.  Thus it follows from  \eqref{strongL} that 
$$\quad I_3 \rightarrow \int_{B^+_1}\phi \mathcal{A}_{0}(\nabla g)\cdot (\nabla v_0-\nabla g) dx.$$

Now the convergences of $I_1, I_2$ and $I_3$ imply that 
$$\int_{B^+_1}\phi [{\bf A}-\mathcal{A}_{0}(\nabla g)]\cdot (\nabla v_0-\nabla g) dx\geq 0,$$
for all $\phi\in C^{\infty}_{0}(B^{+}_1)$, $\phi\geq 0$, and $g\in W^{1, \, p}(B^{+}_1)$. We now fix 
$\varphi\in W^{1,\,p}_{0}(B^{+}_1)$ and set $g=v_0-\gamma \varphi$ where $\gamma>0$ in the above inequality to obtain
$$\int_{B^+_1}\phi [{\bf A}-\mathcal{A}_{0}(\nabla v_0-\gamma \nabla \varphi)]\cdot \nabla \varphi dx\geq 0.$$

Letting $\gamma \rightarrow 0$ yields
$$\int_{B^+_6}\phi [{\bf A}-\mathcal{A}_{0}(\nabla v_0)]\cdot \nabla \varphi dx\geq 0,$$
which by replacing $\varphi\in W^{1,\,p}_{0}(B^{+}_1)$ with $-\varphi$ we actually get
$$\int_{B^+_1}\phi [{\bf A}-\mathcal{A}_{0}(\nabla v_0)]\cdot \nabla \varphi dx=0.$$

Since this holds for all for all $\phi\in C^{\infty}_{0}(B^{+}_1)$, $\phi\geq 0$, we obtain \eqref{desire} as desired.

Finally, to find a contradiction  we take in \eqref{k-contra} $V_k$ to be the unique solution of  \eqref{k-flatV} such that $V_k-v_0 \in W_{0}^{1,\, p}(B^+_1)$. As
\begin{equation*}
\int_{B^+_{1}} |\nabla V_k|^p dx  \leq C \int_{B^+_{1}} |\nabla v_0|^p dx\leq C, \qquad \forall k\geq 1,
\end{equation*}
by passing to a subsequence we may assume that 
\begin{equation}\label{strongLV}
V_k \rightarrow V_0 \quad {\rm weakly~in~}W^{1,\, p}(B^+_{1}),
~{\rm and}~ {\rm strongly ~in~}L^{p}(B^+_{1})
\end{equation}
for some function $V_0\in v_0+ W^{1,\,p}(B^+_1)$. Now arguing as in the case of the sequence $\{v_k\}$ above we find that $V_0$ must be a solution of 
\begin{equation*}
\left\{ \begin{array}{rcl}
 \text{div}~ \mathcal{A}_{0}(\nabla V_0)&=&0   \quad \text{in} ~B^{+}_{1}, \\ 
V_0&=&0  \quad \text{on}~ T_{1}.
\end{array}\right.
\end{equation*}

Thus by uniqueness, which is available here since $\mathcal{A}_0$ satisfies \eqref{monotone}, we get $V_0=v_0$. By \eqref{strongL} and \eqref{strongLV} and sending $k\rightarrow \infty$ in \eqref{k-contra}  we reach a contradiction.
\end{proof}

\begin{lemma}\label{corB6+}
Suppose that $\mathcal{A}: \RR^n\times\RR^n\rightarrow \RR^n$ satisfies \eqref{sublinear}-\eqref{ellipticity}.
For any $\epsilon > 0$ there exists a small $\delta= \delta(n, p, \alpha, \beta, \epsilon) > 0$ such that  if $v \in W^{1,\, p}(\Om_{1}(0))$ is a  solution of \eqref{scaledvto1}  along with
the geometric setting  \eqref{geometric1} and the bound
\begin{equation}\label{uniboundw}
\fint_{B_{1}(0)} |\nabla v|^{p}dx \leq 1,
\end{equation}
then there exists a weak solution $V\in W^{1,\, p}(B^{+}_{1}(0))$ of (\ref{perturbedpdeboundary}) with $\rho=1$, whose  zero extension to $B_{1}(0)$  satisfies
\begin{equation}\label{LinftyboundforV}
\norm{\nabla V}_{L^{\infty}(B_{1/4}(0))} \leq C
\end{equation} 
and
 \[\fint_{B_{1/8}(0)}|\nabla v - \nabla V|^{p}dx \leq \epsilon ^{p}.\]
Here $C=C(n, p, \alpha, \beta)$.
\end{lemma}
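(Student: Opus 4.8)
The plan is to upgrade the $L^p$-closeness of Lemma~\ref{comparisonboundary0} to closeness of the gradients by means of a Caccioppoli-type estimate for the difference $v-V$, the passage across the flat face $T_1$ being handled through the extended equation of Lemma~\ref{Vextension}. Given $\epsilon>0$, fix a parameter $\eta=\eta(\epsilon)>0$ to be chosen at the end, apply Lemma~\ref{comparisonboundary0} with $\eta$ in place of $\epsilon$, and let $V$ be the resulting weak solution of \eqref{perturbedpdeboundary} with $\rho=1$; throughout, $V$ also denotes its zero extension to $B_1(0)$, so that $\fint_{B_1(0)}|v-V|^p\,dx\le\eta^p$. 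Since $\Omega_1(0)\subset\{x_n>-4\delta\}$, the zero extension of $v$ vanishes on $B_1(0)\cap\{x_n\le-4\delta\}$, a set of measure $\ge c(n)|B_1(0)|$ for $\delta$ small, so Poincar\'e's inequality together with \eqref{uniboundw} gives $\fint_{B_1(0)}|v|^p\,dx\le C$ and hence $\fint_{B^+_1(0)}|V|^p\,dx\le C$. As $V$ vanishes on $T_1$, a standard Caccioppoli estimate — using the coercivity $\overline{\mathcal{A}}_{B_1(0)}(\xi)\cdot\xi\ge c|\xi|^p$, which follows from \eqref{monotone} — yields $\fint_{B^{+}_{1/2}(0)}|\nabla V|^p\,dx\le C$, and then Lemma~\ref{w1infty} gives $\|\nabla V\|_{L^\infty(B^{+}_{1/4}(0))}\le C$. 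Since $\nabla V\equiv0$ on $B_{1/4}(0)\cap\{x_n<0\}$, this is exactly \eqref{LinftyboundforV}; it also shows, via the continuity of $\nabla V$ up to $T_1$, that the function $f(x)=-\chi_{\{x_n<0\}}\overline{\mathcal{A}}^{n}_{B_1(0)}(\nabla V(x',0))$ from Lemma~\ref{Vextension} satisfies $\|f\|_{L^\infty(\{|x'|<1/4\})}\le C$.

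For the gradient estimate, fix a cutoff $\phi\in C^\infty_0(B_{1/4}(0))$ with $\phi\equiv1$ on $B_{1/8}(0)$, $0\le\phi\le1$, $|\nabla\phi|\le C$, and use $\varphi=\phi^p(v-V)$ as a test function; one checks, using the density estimate for Reifenberg flat domains, that $\varphi\in W^{1,p}_0(\Omega_1(0))\cap W^{1,p}_0(B_1(0))$ — indeed $v$ vanishes on $\partial\Omega\cap B_1(0)$ while $V$ vanishes identically on $\{x_n\le0\}\supset\partial\Omega\cap B_1(0)$. Testing the equation for $v$ in $\Omega_1(0)$ with $\varphi$ (the integrand extends trivially to $B_1(0)$ since $\overline{\mathcal{A}}_{B_1(0)}(\nabla v)$ and $\nabla\varphi$ both vanish there), testing $\mathrm{div}\,\overline{\mathcal{A}}_{B_1(0)}(\nabla V)=\partial_{x_n}f$ (Lemma~\ref{Vextension}) with $\varphi$, and subtracting, we obtain
\begin{equation*}
\int_{B_1(0)}\big[\overline{\mathcal{A}}_{B_1(0)}(\nabla v)-\overline{\mathcal{A}}_{B_1(0)}(\nabla V)\big]\cdot\nabla\varphi\,dx=-\int_{B_1(0)}f\,\partial_{x_n}\varphi\,dx.
\end{equation*}
Writing $\nabla\varphi=\phi^p(\nabla v-\nabla V)+p\phi^{p-1}(v-V)\nabla\phi$, the leading term on the left is bounded below by $c\int\phi^p(|\nabla v|^2+|\nabla V|^2)^{\frac{p-2}{2}}|\nabla v-\nabla V|^2$ via \eqref{monotone}; the remaining term is estimated with \eqref{sublinear} and H\"older's inequality by $C\big(\fint_{B_1(0)}|v-V|^p\big)^{1/p}\le C\eta$, using \eqref{uniboundw} and $\int_{B^{+}_{1/4}(0)}|\nabla V|^p\le C$. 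On the right, since $v-V\equiv0$ on $\{x_n<-4\delta\}$ while $f$ is supported in $\{x_n<0\}$, the integrand is supported in the slab $\{-4\delta\le x_n\le0\}\cap B_{1/4}(0)$, of measure $\le C\delta$; bounding $\int_{B_{1/4}(0)}|\nabla\varphi|^p\le C$ crudely and applying H\"older gives a bound $\le C\delta^{1-1/p}$. Hence $\int\phi^p(|\nabla v|^2+|\nabla V|^2)^{\frac{p-2}{2}}|\nabla v-\nabla V|^2\le C(\eta+\delta^{1-1/p})$, and the usual pointwise algebra (directly when $p\ge2$, and after one more application of H\"older's inequality with exponents $\frac2p,\frac2{2-p}$ together with \eqref{uniboundw} when $2-1/n<p<2$) turns this into $\int_{B_{1/8}(0)}|\nabla v-\nabla V|^p\,dx\le C(\eta+\delta^{1-1/p})^{\min\{1,p/2\}}$. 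Choosing first $\eta$ and then $\delta$ small enough, depending only on $n,p,\alpha,\beta,\epsilon$, makes the right-hand side at most $\epsilon^p|B_{1/8}(0)|$, which is the assertion.

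The crux is the boundary term. Because $v$ need not vanish on $T_1$, one cannot test the flat-boundary equation for $V$ directly against $\phi^p(v-V)$; this is precisely why Lemma~\ref{Vextension} is invoked, replacing that equation by $\mathrm{div}\,\overline{\mathcal{A}}_{B_1(0)}(\nabla V)=\partial_{x_n}f$ on the full ball. The gain then hinges on the observation that the mismatch between $\partial\Omega$ and $T_1$ is confined to a slab of thickness $\lesssim\delta$, which converts the extra term into the small quantity $\delta^{1-1/p}$; here it is essential that the Lipschitz bound for $V$ furnishes a bounded $f$. The energy bound for $V$ and the verification that $\phi^p(v-V)$ lies in the correct $W^{1,p}_0$ spaces both use the density estimate for Reifenberg flat domains, while the sub-quadratic range $2-1/n<p<2$ only requires the standard interpolation; none of these presents real difficulty once the comparison identity above is in place.
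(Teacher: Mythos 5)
Your proposal is correct and follows essentially the same route as the paper: apply Lemma \ref{comparisonboundary0} with an auxiliary parameter, get the $L^\infty$ bound via Caccioppoli/Poincar\'e and Lemma \ref{w1infty}, then test the difference of the $v$-equation and the extended $V$-equation from Lemma \ref{Vextension} with $\phi^p(v-V)$, using monotonicity below and the key fact that the $f$-term lives in a slab of thickness $\sim\delta$ where $|f|\le C$. The only differences are organizational (you use H\"older directly where the paper absorbs via Young's inequality with parameters, yielding $\delta^{1-1/p}$ in place of $\delta$), which does not affect the argument.
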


\begin{proof}
Given $\eta\in (0, 1)$ by applying Lemma \ref{comparisonboundary0} we find a small $\delta=\delta(n, p, \alpha, \beta, \eta)>0$ and a weak solution $V\in W^{1,\, p}(B^{+}_{1})$ of \eqref{perturbedpdeboundary}, with $\rho=1$, such that 
\begin{equation} \label{eta-ap}
\int_{B_1} |v-V|^p \leq \eta^p.
\end{equation}

Using $\phi^p V$ with $\phi\in C^{\infty}_{0}(B_1)$, $0\leq \phi\leq 1$, and $\phi\equiv 1$ on $B_{1/2}$, as a test function
in \eqref{perturbedpdeboundary} we  get the following Caccioppoli's inequality
$$\int_{B_{1/2}}|\nabla V|^p dx \leq C \int_{B_1} |V|^p dx. $$

This yields 
\begin{eqnarray*}
\int_{B_{1/2}} |\nabla V|^p dx &\leq& C \int_{B_1} \left(|v-V|^p +|v|^p \right)dx  \\
&\leq& C \int_{B_1} \left(|v-V|^p +|\nabla v|^p \right)dx\\
&\leq& C(\eta^p+ 1)\leq C
\end{eqnarray*}
by Poincar\'e's inequality,   \eqref{uniboundw} and \eqref{eta-ap}. Thus using Lemma \ref{w1infty} we obtain \eqref{LinftyboundforV}.

By Lemma \ref{Vextension} the zero extension of $V$ from $B^+_1$ to $B_1$  satisfies
\begin{equation}\label{EquV}
\text{div}\, \overline{\mathcal{A}}_{B_{1}}(\nabla V)= \frac{\partial f}{\partial x_n} 
\end{equation}
weakly in $B_1$, where $f(x)=-\chi_{\{x_n<0\}} \overline{\mathcal{A}}^{n}_{B_{1}}\left(\nabla V(x', 0)\right)$ for $x=(x', x_n)\in B_1$.

Equations \eqref{scaledvto1} and \eqref{EquV} now yield
$$\int_{\Om_1(0)} [\overline{\mathcal{A}}_{B_{1}}(\nabla V)- \overline{\mathcal{A}}_{B_{1}}(\nabla v)]\cdot\nabla\varphi dx -\int_{\Om_1(0)} f \frac{\partial \varphi}{\partial x_n} dx=0$$
for all $\varphi\in W^{1,\,p}_{0}(\Om_1(0))$.
Letting $\varphi=\phi^p (V-v)$, where $\phi$ is a standard cutoff function satisfying
$$\phi\in C^{\infty}_{0}(B_{1/4}), \quad 0\leq \phi\leq 1, \quad {\rm and}~ \phi \equiv 1 ~ {\rm on}~ \overline{B_{1/8}},$$
in the above identity we get
\begin{eqnarray*}
0&=&\int_{\Om_1(0)} \phi^p [\overline{\mathcal{A}}_{B_{1}}(\nabla V)- \overline{\mathcal{A}}_{B_{1}}(\nabla v)]\cdot(\nabla V-\nabla v) dx\\
&&+p\int_{\Om_1(0)} \phi^{p-1}(V-v) [\overline{\mathcal{A}}_{B_{1}}(\nabla V)- \overline{\mathcal{A}}_{B_{1}}(\nabla v)]\cdot \nabla \phi dx\\ 
&&-\int_{\Om_1(0)} \left[f \phi^p \frac{\partial (V-v)}{\partial x_n}  +p  f \phi^{p-1}(V-v) \frac{\partial \phi}{\partial x_n} \right]dx\\
&=:& J_1+J_2+J_3\end{eqnarray*}

By \eqref{sublinear} we have 
$$|J_2|\leq C \int_{\Om_1(0)} |V-v| (|\phi \nabla V|^{p-1}+ |\phi \nabla v|^{p-1}) |\nabla \phi| dx,$$
which by Young's inequality gives
$$|J_2|\leq \tau_2 \int_{\Om_1(0)} (|\phi \nabla V|^{p}+ |\phi \nabla v|^{p})dx + C(\tau_2) \int_{\Om_1(0)} |V-v|^p|\nabla \phi|^p dx
$$
for any $\tau_2>0$. Likewise,
\begin{eqnarray*}
|J_3|&\leq& \tau_3 \int_{\Om_1(0)} \left[|\phi \nabla (V-v)|^{p} + |\nabla\phi|^p |V- v|^{p}\right]dx\\
&& +\, C(\tau_3) \int_{\Om_1(0)}  |f|^\frac{p}{p-1} \phi^pdx
\end{eqnarray*}
for any $\tau_3>0$.

We now estimate the term $J_1$ from below. To do so we consider separately the case $p\geq 2$ and the case $p<2$. For $p\geq  2$ we immediately 
get from \eqref{monotone} that 
$$J_1 \geq c(\alpha, p) \int_{\Om_1(0)} |\phi \nabla(V-v)|^p dx.$$

For $p<2$ we write 
$$|\nabla (V-v)|^{p}= (|\nabla V|^2+ |\nabla v|^2)^{\frac{(p-2)p}{4}} |\nabla V- \nabla v|^p (|\nabla V|^2+ |\nabla v|^2)^{\frac{(2-p)p}{4}}$$ 
and use Young's inequality with exponents $\frac{2}{p}$ and $\frac{2}{2-p}>1$ to get
\begin{eqnarray*}
|\nabla (V-v)|^{p}&\leq& \tau_1 (|\nabla V|^p + |\nabla v|^p) +\\
&&+\, C(p)\tau_1^{\frac{p-2}{p}} (|\nabla V|^2+ |\nabla v|^2)^{\frac{(p-2)}{2}} |\nabla V- \nabla v|^2
\end{eqnarray*}
for any $\tau_1>0$.
Thus in view of \eqref{monotone} we get, for $p<2$,
\begin{eqnarray*}
J_1 &\geq& c(\alpha,p) \tau_1^{\frac{2-p}{p}} \int_{\Om_1(0)} |\phi \nabla(V-v)|^p dx\\
&& - \, c(\alpha,p)\tau_1^{\frac{2}{p}} \int_{\Om_1(0)} \phi^p (|\nabla V|^p + |\nabla v|^p) dx.
\end{eqnarray*}

Combining the estimates of $J_i$, $i=1, 2, 3$, and the equation $J_1=-J_2-J_3$ we arrive at
\begin{eqnarray*}
\lefteqn{c(\alpha, p) \int_{\Om_1(0)} |\phi \nabla(V-v)|^p dx}\\
&\leq& \tau_2 \int_{\Om_1(0)} (|\phi \nabla V|^{p}+ |\phi \nabla v|^{p})dx + C(\tau_2) \int_{\Om_1(0)} |V-v|^p|\nabla \phi|^p dx+\\
&&+\, \tau_3 \int_{\Om_1(0)} \left[|\phi \nabla (V-v)|^{p} + |\nabla\phi|^p |V- v|^{p}\right]dx +\\
&&+\, C(\tau_3) \int_{\Om_1(0)}  |f|^\frac{p}{p-1} \phi^pdx
\end{eqnarray*}
in the case $p\geq 2$, and 
\begin{eqnarray*}
\lefteqn{c(\alpha, p) \int_{\Om_1(0)} |\phi \nabla(V-v)|^p dx }\\
&\leq& \left(\tau_1^{\frac{p-2}{p}}\tau_2+c(\alpha,p)\tau_1\right) \int_{\Om_1(0)} (|\phi \nabla V|^{p}+ |\phi \nabla v|^{p})dx +\\
&& +\, C(\tau_1, \tau_2) \int_{\Om_1(0)} |V-v|^p|\nabla \phi|^p dx +\\
&& +\, \tau_1^{\frac{p-2}{p}}\tau_3 \int_{\Om_1(0)} \left[|\phi \nabla (V-v)|^{p} + |\nabla\phi|^p |V- v|^{p}\right]dx +\\
&& +\, C(\tau_1, \tau_3) \int_{\Om_1(0)}  |f|^\frac{p}{p-1} \phi^pdx
\end{eqnarray*}
in the case $p<2$. 

Choosing $\tau_2=\tau_3=\tau_1$ when $p\geq 2$ and $\tau_2=\tau_3=\tau_1^{\frac{2}{p}}$ when $p<2$, and then restricting $\tau_1<c(\alpha,p)/2$  we obtain
\begin{eqnarray*}
\lefteqn{\frac{1}{2}c(\alpha, p) \int_{\Om_1(0)} |\phi \nabla(V-v)|^p dx} \\
&\leq& C \tau_1 \int_{\Om_1(0)} (|\phi \nabla V|^{p}+ |\phi \nabla v|^{p})dx +\\
&& +\, C(\tau_1) \int_{\Om_1(0)} |V-v|^p|\nabla \phi|^p dx  +\, C(\tau_1) \int_{\Om_1(0)}  |f|^\frac{p}{p-1} \phi^pdx,
\end{eqnarray*}
for any $2-1/n <p\leq n$. Thus by our choice of $\phi$ and the bounds \eqref{uniboundw}-\eqref{LinftyboundforV} we obtain \begin{eqnarray*}
\lefteqn{c\int_{\Om_{1/8}(0)} |\nabla(V-v)|^p dx}\\
&\leq& C \tau_1 \int_{\Om_{1/4}(0)} (|\nabla V|^{p}+ |\nabla v|^{p})dx + \\
&& +\, C(\tau_1) \int_{\Om_{1/4}(0)}\left[ |V-v|^p +|f|^\frac{p}{p-1} \right]dx \\
&\leq& C\tau_1 +C(\tau_1)\left[ \eta^p+ \int_{\{x\in B_{1/4}: -4\delta<x_n<0\}} |\nabla V(x',0)|^p dx\right]\\
&\leq& C\tau_1 +C(\tau_1)\left(\eta^p+ C \delta\right).
\end{eqnarray*}

Finally, for any $\epsilon>0$ by choosing $\tau_1, \eta$, and then $\delta$  appropriately we get
$$\fint_{B_{1/8}} |\nabla(V-v)|^p dx=\frac{1}{|B_{1/8}|}\int_{\Om_{1/8}(0)} |\nabla(V-v)|^p dx \leq \epsilon^p,$$
which completes the proof of the lemma. 
\end{proof}

\begin{theorem}\label{compareVLinfty}
Suppose that $\mathcal{A}: \RR^n\times\RR^n\rightarrow \RR^n$ satisfies \eqref{sublinear}-\eqref{ellipticity}.
 For any $\epsilon > 0$ there exists a small $\delta = \delta (n, p, \alpha, \beta, \epsilon) > 0$ such that if $v \in W^{1,\, p}(\Omega_{\rho}(0))$ is a  solution of 
\begin{equation*}
\left\{ \begin{array}{rcl}
 {\rm div}~ \overline{\mathcal{A}}_{B_{\rho}(0)}(\nabla v)&=&0   \quad \text{in} ~\Om_{\rho}(0), \\ 
v&=& 0  \quad \text{on}~ \partial\Om\cap B_{\rho}(0),
\end{array}\right.
\end{equation*}
 with 
\begin{equation}\label{BrhoOmrho}
 B_{\rho}^{+}(0)\subset \Omega_{\rho}(0)\subset B_{\rho}(0)\cap \{ x_{n} > -4\delta\rho \},
\end{equation}
and the zero extension of $v$ from $\Omega_{\rho}(0)$ to $B_{\rho}(0)$ belongs to $W^{1,\, p}(B_{\rho}(0))$,
then there exists a weak solution $V\in W^{1,\, p}(B^{+}_{\rho}(0))$ of (\ref{perturbedpdeboundary}) whose  zero extension to $B_{\rho}(0)$   satisfies 
$$ \norm{\nabla V}^{p}_{L^{\infty}(B_{\rho/4}(0))} \leq C \fint_{B_{\rho}(0)}|\nabla v|^{p}dx$$
and
$$\fint_{B_{\rho/8}(0)}|\nabla v - \nabla V|^{p}dx \leq \epsilon^p \fint_{B_{\rho}(0)}|\nabla v|^{p}dx.$$
Here $C=C(n,p, \alpha, \beta)$.
\end{theorem}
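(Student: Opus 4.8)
The plan is to reduce Theorem \ref{compareVLinfty} to the unit-scale statement in Lemma \ref{corB6+} by rescaling and normalizing; no new analytic input is required.

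First I would dispose of the degenerate case. If $\fint_{B_\rho(0)}|\nabla v|^p\,dx=0$, then $\nabla v=0$ a.e.\ on $\Om_\rho(0)$; since by hypothesis the zero extension $\bar v$ of $v$ to $B_\rho(0)$ lies in $W^{1,p}(B_\rho(0))$, its weak gradient vanishes a.e.\ on all of $B_\rho(0)$ (the pieces match with no interface term precisely because $\bar v\in W^{1,p}$), so $\bar v$ is constant on the ball. But $\bar v\equiv0$ on $B_\rho(0)\setminus\Om_\rho(0)$, which is nonempty since $\Om_\rho(0)\subset B_\rho(0)\cap\{x_n>-4\delta\rho\}$ by \eqref{BrhoOmrho}; hence $\bar v\equiv0$ and $v\equiv0$. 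Taking $V\equiv0$ then makes both asserted inequalities trivial.

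So assume $\kappa:=\bigl(\fint_{B_\rho(0)}|\nabla v|^p\,dx\bigr)^{1/p}>0$ and set
\[
\tilde v(y):=\frac{v(\rho y)}{\rho\,\kappa},\qquad \widetilde\Om_1(0):=\tfrac1\rho\,\Om_\rho(0),\qquad \widehat{\mathcal A}(\xi):=\kappa^{-(p-1)}\,\overline{\mathcal A}_{B_\rho(0)}(\kappa\xi).
\]
The key observation is that the structural conditions \eqref{sublinear}--\eqref{ellipticity} are preserved, with the \emph{same} constants $\alpha,\beta$, under the rescaling $\mathcal B(\xi)\mapsto\kappa^{-(p-1)}\mathcal B(\kappa\xi)$; and since $\overline{\mathcal A}_{B_\rho(0)}$, hence $\widehat{\mathcal A}$, is independent of $x$, one has $\overline{\widehat{\mathcal A}}_{B_1(0)}=\widehat{\mathcal A}$. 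A direct change of variables shows that $\tilde v$ solves ${\rm div}\,\overline{\widehat{\mathcal A}}_{B_1(0)}(\nabla\tilde v)=0$ in $\widetilde\Om_1(0)$ with $\tilde v=0$ on $\partial\widetilde\Om\cap B_1(0)$, that its zero extension to $B_1(0)$ lies in $W^{1,p}(B_1(0))$, that \eqref{BrhoOmrho} turns into the geometric setting \eqref{geometric1}, namely $B_1^+(0)\subset\widetilde\Om_1(0)\subset B_1(0)\cap\{x_n>-4\delta\}$, and that $\fint_{B_1(0)}|\nabla\tilde v|^p\,dx=\kappa^{-p}\fint_{B_\rho(0)}|\nabla v|^p\,dx=1$. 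Thus Lemma \ref{corB6+}, applied with its ``$\mathcal A$'' equal to $\widehat{\mathcal A}$, produces the same $\delta=\delta(n,p,\alpha,\beta,\epsilon)$ and a weak solution $\widetilde V\in W^{1,p}(B_1^+(0))$ of \eqref{perturbedpdeboundary} (for the operator $\widehat{\mathcal A}$, with $\rho=1$) whose zero extension satisfies $\norm{\nabla\widetilde V}_{L^\infty(B_{1/4}(0))}\le C$ and $\fint_{B_{1/8}(0)}|\nabla\tilde v-\nabla\widetilde V|^p\,dx\le\epsilon^p$.

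Finally I would undo the scaling: put $V(x):=\rho\,\kappa\,\widetilde V(x/\rho)$ on $B_\rho^+(0)$ and extend by zero. Then $V$ is a weak solution of \eqref{perturbedpdeboundary} for $\overline{\mathcal A}_{B_\rho(0)}$, and since $\nabla V(x)=\kappa\,\nabla\widetilde V(x/\rho)$ the two estimates transform into
\[
\norm{\nabla V}_{L^\infty(B_{\rho/4}(0))}^p=\kappa^p\,\norm{\nabla\widetilde V}_{L^\infty(B_{1/4}(0))}^p\le C\kappa^p=C\fint_{B_\rho(0)}|\nabla v|^p\,dx
\]
and, by the substitution $x=\rho y$,
\[
\fint_{B_{\rho/8}(0)}|\nabla v-\nabla V|^p\,dx=\kappa^p\fint_{B_{1/8}(0)}|\nabla\tilde v-\nabla\widetilde V|^p\,dx\le\epsilon^p\kappa^p=\epsilon^p\fint_{B_\rho(0)}|\nabla v|^p\,dx,
\]
which are exactly the claimed inequalities. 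The only point demanding care is verifying the scale-invariance of \eqref{sublinear}--\eqref{ellipticity} and of the weak formulation (with the zero-extension convention) under the above substitutions; there is no genuine obstacle here, since all the substantive work is already contained in Lemmas \ref{comparisonboundary0} and \ref{corB6+}.
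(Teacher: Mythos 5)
Your proposal is correct and follows essentially the same route as the paper: normalize by $\kappa=(\fint_{B_\rho(0)}|\nabla v|^p dx)^{1/p}$, rescale $x\mapsto x/\rho$ and the averaged nonlinearity by $\kappa^{-(p-1)}\overline{\mathcal{A}}_{B_\rho(0)}(\kappa\,\cdot)$ (which preserves \eqref{sublinear}--\eqref{ellipticity} with the same $\alpha,\beta$), apply Lemma \ref{corB6+} at unit scale, and scale back via $V(x)=\rho\kappa\widetilde V(x/\rho)$. Your extra treatment of the degenerate case $\kappa=0$ is a fine (and harmless) addition that the paper leaves implicit.
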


\begin{proof}
Let  $\mathcal{B}(z, \eta)= \mathcal{A}(\rho z, \kappa \eta)/\kappa^{p-1},$
where
$\kappa^p=\fint_{B_{\rho}(0)}|\nabla v|^{p}dx.$
Then $\mathcal{B}$ satisfies conditions \eqref{sublinear}-\eqref{ellipticity} with the same constants $\alpha$ and $\beta$.
Moreover, it is easy to see that the function $h(z)=v(\rho z)/(\rho\kappa)$ satisfies the equation
\begin{equation*}
\left\{ \begin{array}{rcl}
 {\rm div}~ \overline{\mathcal{B}}_{B_{1}(0)}(\nabla h)&=&0   \quad \text{in} ~\Om^{\rho}_{1}(0), \\ 
h&=& 0  \quad \text{on}~ \partial\Om^{\rho}\cap B_{1}(0),
\end{array}\right.
\end{equation*}
where $\Om^{\rho}=\{z=x/\rho: x\in\Om\}$. By the choice of $\kappa$ we  have 
$$\left(\fint_{B_{1}(0)}|\nabla h|^{p}dz \right)^{1/p} = 1.$$

Also, the inclusions in \eqref{BrhoOmrho} now become 
\begin{equation*}
 B_{1}^{+}(0)\subset \Omega^{\rho}_{1}(0)\subset B_{1}(0)\cap \{ x_{n} > -4\delta \}.
\end{equation*}

Therefore, applying Lemma \ref{corB6+} for any $\epsilon>0$ there exist a constant $\delta=\delta(n, p, \alpha, \beta, \epsilon)>0$ and a function $G$ such that 
$$\fint_{B_{1/8}(0)}|\nabla h - \nabla G|^{p}dz \leq \epsilon^p, \qquad {\rm and~} \norm{\nabla G}^{p}_{L^{\infty}(B_{1/4}(0))} \leq C.$$

Now scaling back by choosing $V(x)= \kappa \rho G(x/\rho)$  we obtain the theorem. 
\end{proof}

\begin{corollary} \label{mainlocalestimate-boundary} Suppose that $\mathcal{A}: \RR^n\times\RR^n\rightarrow \RR^n$ satisfies \eqref{sublinear}-\eqref{ellipticity}. Let $s=\frac{\theta p}{(\theta-1)(p-1)}$ for some $\theta\in(1,\theta_0]$, where $\theta_0>1$ is as found in Lemma \ref{RVHbdry}. For any $\epsilon>0$ there exists a small $\delta=\delta(n, p, \alpha, \beta, \epsilon)>0$ such that the following holds. 
If $\Om$ is  $(\delta, R_0)$-Reifenberg flat and $u\in W_{0}^{1,\, p}(\Om)$ is a weak solution of 
\eqref{basicpde}  with $x_0\in\partial\Om$ and $0<R<R_0/10$ 
then there is a function $V\in  W^{1,\, \infty}(B_{R/10}(x_0))$ such that 
\begin{equation}\label{Linftyboundbdry}
\norm{\nabla V}_{L^{\infty}(B_{R/10}(x_0))}\leq C \fint_{B_{10R}(x_0)}|\nabla u| dx + C F(p, \nabla u,\mu, B_{10R}(x_0)),
\end{equation}
and 
\begin{eqnarray}\label{u-VV}
\lefteqn{\fint_{B_{R/10}(x_0)} |\nabla u-\nabla V|dx}\\
&\leq&   C \left(\epsilon + ([\mathcal{A}]^{R_0}_{s})^{\min\{1, \frac{1}{p-1}\}} \right)  \fint_{B_{10R}(x_0)}|\nabla u|dx +\nonumber\\
&& +\,  C \left(\epsilon + 1+ ([\mathcal{A}]^{R_0}_{s})^{\min\{1, \frac{1}{p-1}\}} \right)  F(p, 
\nabla u,\mu, B_{10R}(x_0)). \nonumber
\end{eqnarray}
Here $C=C(n, p, \alpha, \beta)$.
\end{corollary}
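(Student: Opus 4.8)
The plan is to chain together the boundary comparison estimates already assembled in this section, following exactly the pattern used in the interior case (Corollary \ref{mainlocalestimate-interior}). Starting from a weak solution $u\in W^{1,\,p}_0(\Om)$ of \eqref{basicpde}, I would introduce the three auxiliary functions: $w$ solving \eqref{wapprox} on $\Om_{10R}(x_0)$, then $v$ solving \eqref{vapprox} on $\Om_\rho(0)$ with $\rho=R(1-\delta)$ in the rotated coordinate system provided by Reifenberg flatness, and finally the function $V$ supplied by Theorem \ref{compareVLinfty} applied on $B_\rho(0)$, which satisfies the Lipschitz bound $\norm{\nabla V}^p_{L^\infty(B_{\rho/4}(0))}\leq C\fint_{B_\rho(0)}|\nabla v|^p dx$ together with $\fint_{B_{\rho/8}(0)}|\nabla v-\nabla V|^p dx\leq \epsilon^p\fint_{B_\rho(0)}|\nabla v|^p dx$.

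The key steps, in order, are as follows. First, to get the $L^\infty$ bound \eqref{Linftyboundbdry}: combine Theorem \ref{compareVLinfty} with the reverse H\"older inequality of Lemma \ref{RVHbdry} (to pass from $\fint|\nabla w|^p$ on a small ball to $\fint|\nabla w|$ on a comparable one) and then with Lemma \ref{DMbdry} (to replace $\fint_{B_{10R}}|\nabla w|$ by $\fint_{B_{10R}}|\nabla u| + CF(p,\nabla u,\mu,B_{10R}(x_0))$); here one must control $\fint_{B_\rho(0)}|\nabla v|^p$ by $\fint_{B_{6\rho}(0)}|\nabla w|^p$ using the energy estimate for \eqref{vapprox}, which is standard since $\overline{\mathcal A}_{B_\rho}$ satisfies the same structure conditions. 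Second, for the $L^1$ comparison \eqref{u-VV}: write $\nabla u-\nabla V = (\nabla u-\nabla w) + (\nabla w-\nabla v) + (\nabla v-\nabla V)$ and average over $B_{R/10}(x_0)\subset B_{\rho/8}(0)$ (valid because $R/10 < \rho/8$ once $\delta$ is small). The first term is controlled by Lemma \ref{DMbdry}; the second by Lemma \ref{BMOapprox2} plus H\"older, giving a factor $([\mathcal A]^{R_0}_s)^{\min\{1,1/(p-1)\}}$ times $\fint_{B_{6\rho}(0)}|\nabla w|$; the third by Theorem \ref{compareVLinfty}, giving a factor $\epsilon$ times $(\fint_{B_\rho(0)}|\nabla v|^p)^{1/p}$. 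Third, one uniformly converts all the $\fint|\nabla w|$ and $(\fint|\nabla w|^p)^{1/p}$ and $(\fint|\nabla v|^p)^{1/p}$ terms into $\fint_{B_{10R}(x_0)}|\nabla u| + CF(p,\nabla u,\mu,B_{10R}(x_0))$ via Lemma \ref{RVHbdry} and Lemma \ref{DMbdry}, and collects the coefficients to match the stated form of \eqref{u-VV}. One also sets $\epsilon$ in Lemma \ref{BMOapprox2} irrelevant (it has none) while keeping the $\epsilon$ from Theorem \ref{compareVLinfty}; the smallness $\delta=\delta(n,p,\alpha,\beta,\epsilon)$ is inherited from Theorem \ref{compareVLinfty}, and one additionally requires $\delta<1/2$ so that Lemma \ref{RVHbdry} applies and the ball inclusions hold.

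The main obstacle, as usual in this type of argument, is bookkeeping the scales: one must verify that the chain of inclusions $B_{R/10}(x_0)\subset B_{\rho/8}(0)\subset B_{6\rho}(0)\subset B_{10R}(x_0)$ holds once $\delta$ is small (since $x_0$ corresponds to the point $(0,\dots,0,-\rho\delta/(1-\delta))$ in the new coordinates and $\rho=R(1-\delta)$), and that all the radii in the reverse-H\"older inequality of Lemma \ref{RVHbdry}, which needs $B_{3\rho}(z)\subset B_{10R}(x_0)$, actually fit. This is entirely routine once $\delta<1/2$, but it is where all the geometric care of the section is cashed in. There is no genuinely new idea beyond combining the established lemmas; the proof is short and may simply be stated as "combining Theorem \ref{compareVLinfty}, Lemmas \ref{RVHbdry}, \ref{DMbdry}, \ref{BMOapprox2} exactly as in the proof of Corollary \ref{mainlocalestimate-interior}."
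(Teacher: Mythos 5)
Your proposal is correct and follows essentially the same route as the paper: the same decomposition $\nabla u-\nabla V=\nabla(u-w)+\nabla(w-v)+\nabla(v-V)$ with $w$, $v$, $V$ from \eqref{wapprox}, \eqref{vapprox} and Theorem \ref{compareVLinfty}, the same use of Lemmas \ref{DMbdry}, \ref{BMOapprox2}, \ref{RVHbdry} to convert everything to $\fint_{B_{10R}(x_0)}|\nabla u|\,dx + F(p,\nabla u,\mu,B_{10R}(x_0))$, and the same geometric bookkeeping of the inclusions $B_{R/10}(x_0)\subset B_{\rho/8}(0)\subset B_{6\rho}(0)\subset B_{10R}(x_0)$. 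The only cosmetic discrepancy is the smallness threshold for the inclusions (the paper takes $\delta<1/45$ rather than $\delta<1/2$), which is immaterial since $\delta$ is ultimately chosen small depending on $\epsilon$ via Theorem \ref{compareVLinfty}.
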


\begin{proof} 
Let  $x_0\in\partial\Om$, $0<R<R_0/10$  and  set
$\rho= R(1-\delta)$. By the remark made after Lemma \ref{DMbdry}, after a translation and a rotation, we may assume that $0\in\Om$, 
$x_0=(0, \dots, 0, -\delta \rho/(1-\delta))$  and the geometric setting

\begin{equation}\label{BrhoOmrhosecond}
 B_{\rho}^{+}(0)\subset \Omega_{\rho}(0)\subset B_{\rho}(0)\cap \{ x_{n} > -4 \delta\rho \}.
\end{equation}

Moreover, we have 
$$B_\rho(0)\subset B_{6\rho}(0)\subset B_{10R}(x_0),\quad {\rm and~} B_{R/10}(x_0)\subset B_{\rho/8}(0)$$
provided $\delta<1/45$.

Next we choose $w$ and $v$ as in \eqref{wapprox} and \eqref{vapprox} corresponding to these $R$ and $\rho$.
Then there holds
\begin{equation}\label{wtov}
\fint_{B_{\rho}(0)} |\nabla v|^p dx \leq C  \fint_{B_{\rho}(0)} |\nabla w|^p dx.
\end{equation}

For $V$ to be determined, we  observe that 
$$\fint_{B_{\rho}(0)} | \nabla u- \nabla V|dx=\fint_{B_{\rho}(0)} | \nabla (u-w) +\nabla (w-v)+\nabla (v-V)|dx.$$

 By Lemma \ref{DMbdry}
we find
\begin{eqnarray*}
\fint_{B_{\rho}(0)} | \nabla (u-w)|dx &\leq& C \fint_{B_{10R}(x_0)} | \nabla (u-w)|dx\\
&\leq& C F(p, \nabla u,\mu, B_{10R}(x_0)).
\end{eqnarray*}

Also by  Lemma  \ref{BMOapprox2} we have 
\begin{eqnarray*}
\fint_{B_{\rho}(0)} |\nabla (w-v)|dx\leq C ([\mathcal{A}]^{R_0}_{s})^{\min\{1, \frac{1}{p-1}\}} \fint_{B_{6\rho}(0)}|\nabla w| dx.
\end{eqnarray*}

On the other hand, by Theorem \ref{compareVLinfty} for any $\epsilon>0$ we can find a small positive $\delta=\delta(n, p, \alpha, \beta, \epsilon)<1/45$ such that,
under \eqref{BrhoOmrhosecond}, there is a  
function  $V\in W^{1,\, p}(B_\rho(0))\cap W^{1,\, \infty}(B_{\rho/2}(0))$ such that
$$
\norm{\nabla V}^{p}_{L^{\infty}(B_{\rho/4}(0))} \leq C \fint_{B_{\rho}(0)} |\nabla v|^p dx \leq  C  \fint_{B_{\rho}(0)} |\nabla w|^p dx,
$$
and
$$\fint_{B_{\rho/8}(0)} | \nabla (v- V)|^pdx \leq \epsilon^p \fint_{B_{\rho}(0)} |\nabla v|^p dx \leq  C  \epsilon^p\fint_{B_{\rho}(0)} |\nabla w|^p dx.$$
Here we  used \eqref{wtov} in the above inequalities.

At this point note that by Lemmas \ref{RVHbdry} and \ref{DMbdry} there holds
\begin{eqnarray*}
\left(\fint_{B_{\rho}(0)}|\nabla w|^pdx\right)^{1/p}&\leq& C \fint_{B_{6\rho}(0)}|\nabla w|dx\\
&\leq& C \fint_{B_{10R}(x_0)}|\nabla w|dx\\
&\leq& C\fint_{B_{10R}(x_0)}|\nabla u|dx + C F(p, 
\nabla u,\mu, B_{10R}(x_0)).
\end{eqnarray*}

It is then easy to see that these inequalities  yield \eqref{Linftyboundbdry} and \eqref{u-VV}.
\end{proof}

\section{Weighted estimates of upper-level sets}

We begin this section with the following technical result.

\begin{proposition}\label{Byun-Wang-int-bdry}
With $\mathcal{A}$ satisfying \eqref{sublinear}-\eqref{ellipticity}, there exist constants $\Lambda, s>1$, depending only on $n, p, \alpha,$ and $\beta$, such that the following holds. 
For any $\epsilon>0$ there exists a small $\delta=\delta(n, p, \alpha, \beta, \epsilon)>0$ such that if $u\in W_{0}^{1,\, p}(\Om)$ is a weak solution of 
\eqref{basicpde} where $\Om$ is $(\delta, R_0)$-Reifenberg flat,  $[\mathcal{A}]_s^{R_0}\leq \delta$,  and  
\begin{equation}\label{BWassum2}
B_{\rho}(y) \cap \{x\in \RR^n: {\rm\bf M}(|\nabla u|)\leq 1\}\cap \{ x\in \RR^n: {\rm\bf M}_{1}(\mu)^{\frac{1}{p-1}}\leq \delta \}\neq \emptyset
\end{equation}
for some ball $B_{\rho}(y)$ with $\rho< R_0/600$, then there holds
\begin{equation}\label{lambdadecay}
|\{ x\in \RR^n: {\rm\bf M}(|\nabla u|) > \Lambda\} \cap B_{\rho}(y)|< \epsilon \, |B_{\rho}(y)|.
\end{equation}
\end{proposition}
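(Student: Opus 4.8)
The plan is to reduce the proposition to the two local comparison estimates already established, namely Corollary~\ref{mainlocalestimate-interior} in the interior case and Corollary~\ref{mainlocalestimate-boundary} in the boundary case, together with the weak-type $(1,1)$ bound for the Hardy--Littlewood maximal function. The dichotomy is governed by the distance from $y$ to $\partial\Om$: if $B_{\rho}(y)$ sits well inside $\Om$ (say $B_{4\rho}(y)\Subset\Om$) we are in the interior regime and apply Corollary~\ref{mainlocalestimate-interior} on an appropriate dilate, while if $B_{\rho}(y)$ is close to the boundary we pick $x_0\in\partial\Om$ near $y$ and apply Corollary~\ref{mainlocalestimate-boundary} on a ball $B_{10R}(x_0)$ containing $B_{\rho}(y)$, with $R$ comparable to $\rho$ (this is where the restriction $\rho<R_0/600$ is used, to guarantee $R<R_0/10$ and that all the auxiliary balls stay within scale $R_0$).

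In either case the output is a function $V$, Lipschitz on a ball comparable to $B_{\rho}(y)$, satisfying an $L^\infty$ bound on $\nabla V$ and an $L^1$-closeness estimate $\fint |\nabla u-\nabla V|\,dx\le (\text{small})$, where ``small'' is controlled by $\epsilon_0+([\mathcal{A}]_s^{R_0})^{\min\{1,1/(p-1)\}}$ times $\fint_{B}|\nabla u|\,dx + F(p,\nabla u,\mu,B)$ for the relevant enclosing ball $B$. The key point is that the non-emptiness hypothesis \eqref{BWassum2} supplies a point $z_0\in B_{\rho}(y)$ with ${\rm\bf M}(|\nabla u|)(z_0)\le 1$ and ${\rm\bf M}_1(\mu)(z_0)^{1/(p-1)}\le\delta$. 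Testing the first against a ball centered appropriately gives $\fint_{B}|\nabla u|\,dx\le C$; testing the second, and observing that $|\mu|(B_r)/r^{n-1}\le C\,{\rm\bf M}_1(\mu)(z_0)$ for balls $B_r$ through $z_0$ of radius comparable to $\rho$, gives $F(p,\nabla u,\mu,B)\le C\delta^{\,p-1}\!\cdot(\cdots)\le C\delta'$ for a small $\delta'$ we control (here one uses, in the singular range $2-1/n<p<2$, the extra term of $F$ together with the bound $\fint|\nabla u|\le C$ to absorb the $(\fint|\nabla u|)^{2-p}$ factor). Hence $\norm{\nabla V}_{L^\infty}\le C_1$ for a structural constant $C_1$, and $\fint_{B_{\rho}(y)}|\nabla u-\nabla V|\,dx\le C(\epsilon_0+\delta')$.

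Now set $\Lambda:=\max\{3^n,\, 2C_1\}$ (so $\Lambda$ depends only on $n,p,\alpha,\beta$) and estimate the bad set. For $x\in B_{\rho}(y)$ with ${\rm\bf M}(|\nabla u|)(x)>\Lambda$, splitting the maximal function over balls of radius $\le\rho$ versus $>\rho$ and using that $3^n$ balls of radius $>\rho$ centered in $B_\rho(y)$ are covered by the testing against $z_0$ (which forces the large-scale part of ${\rm\bf M}(|\nabla u|)$ to be $\le 3^n\le\Lambda$), one gets ${\rm\bf M}\big((|\nabla u|)\chi_{B_{4\rho}(y)}\big)(x)>\Lambda$, hence by the triangle inequality either ${\rm\bf M}(|\nabla u-\nabla V|\chi_{B_{4\rho}(y)})(x)>\Lambda/2$ or ${\rm\bf M}(|\nabla V|\chi_{B_{4\rho}(y)})(x)>\Lambda/2\ge C_1\ge\norm{\nabla V}_{L^\infty}$. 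The latter set is empty, so
\begin{equation*}
|\{x\in B_{\rho}(y): {\rm\bf M}(|\nabla u|)>\Lambda\}|\le \big|\{{\rm\bf M}(|\nabla u-\nabla V|\chi_{B_{4\rho}(y)})>\tfrac{\Lambda}{2}\}\big|\le \frac{C}{\Lambda}\int_{B_{4\rho}(y)}|\nabla u-\nabla V|\,dx\le C(\epsilon_0+\delta')\,|B_\rho(y)|,
\end{equation*}
using the weak-$(1,1)$ inequality. Choosing first $\epsilon_0$ and then $\delta$ small enough that $C(\epsilon_0+\delta')<\epsilon$ yields \eqref{lambdadecay}.

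I expect the main obstacle to be purely bookkeeping rather than conceptual: matching the radii and balls in the two corollaries (whose natural enclosing balls are $B_{2R}(x_0)$ resp. $B_{10R}(x_0)$) to the single ball $B_{\rho}(y)$ in the statement, verifying all the scale restrictions collapse to $\rho<R_0/600$, and carefully controlling the quantity $F(p,\nabla u,\mu,B)$ in the subquadratic range $2-1/n<p<2$, where $F$ is not homogeneous and one must combine the pointwise bound on ${\rm\bf M}_1(\mu)$ at $z_0$ with the already-established bound $\fint_{B}|\nabla u|\,dx\le C$ to see that $F$ is indeed $\le C\delta'$ with $\delta'\to0$ as $\delta\to0$.
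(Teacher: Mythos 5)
Your proposal is correct and follows essentially the same route as the paper's proof: the good point supplied by \eqref{BWassum2}, the localization of ${\rm\bf M}(|\nabla u|)$ at threshold $3^n$, the dichotomy $B_{4\rho}(y)\Subset\Om$ versus $\overline{B_{4\rho}(y)}\cap\partial\Om\neq\emptyset$ with Corollaries \ref{mainlocalestimate-interior} and \ref{mainlocalestimate-boundary}, the choice $\Lambda=\max\{3^n,2C_0\}$, and the weak-$(1,1)$ bound. The only deviation is bookkeeping: the paper localizes the maximal function to $B_{2\rho}(y)$ (which is exactly what makes the scale restriction $\rho<R_0/600$ suffice, via $R=60\rho$), whereas your cutoff at $B_{4\rho}(y)$ would force a slightly smaller admissible $\rho$ unless you shrink it to $2\rho$.
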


\begin{proof}  By \eqref{BWassum2} there exists $x_0\in B_\rho(y)$ such that for any $r>0$
\begin{equation}\label{MaxCon}
\fint_{B_r(x_0)} |\nabla u| dz \leq 1 \quad {\rm and}~ r\fint_{B_r(x_0)} d|\mu|  \leq \delta^{p-1}. 
\end{equation}
Here we recall that  both $u$ and $\mu$ are extended by zero outside $\Om$. 
We now claim that for $x\in B_\rho(y)$ there holds 
\begin{equation}\label{localizemax}
{\rm\bf M}(|\nabla u|)(x)  \leq \max\left\{{\rm\bf M}(\chi_{B_{2\rho}(y)}|\nabla u|)(x), 3^n  \right\}. 
\end{equation}
Indeed, for $r\leq \rho$ we have $B_r(x)\subset B_{2\rho}(y)$ and thus
$$\fint_{B_r(x)} |\nabla u| dz=\fint_{B_r(x)} \chi_{B_{2\rho}(y)} |\nabla u| dz, $$
whereas  for $r>\rho$ we have $B_r(x)\subset B_{3r}(x_0)$ and thus \eqref{MaxCon} yields 
$$\fint_{B_r(x)}  |\nabla u| dz \leq 3^n \fint_{B_{3r}(x_0)} |\nabla u| dz\leq 3^n.$$

We next observe that by  Remark \ref{notionofsol} regarding the notion of solutions we may assume that $u\in W^{1,\, p}_0(\Om)$.
A similar approximation procedure can be found in \cite[Remark 3.3]{Ph4}. 

In order to prove \eqref{lambdadecay} we consider separately the case  $B_{4\rho}(y)\Subset\Om$ and the case $\overline{B_{4\rho}(y)}\cap \partial\Om\not=\emptyset$.
First we consider the case that  $\overline{B_{4\rho}(y)}\cap\partial\Om\not=\emptyset$. Let $y_0\in \overline{B_{4\rho}(y)}\cap\partial\Om$.  We have 
\begin{equation}\label{yy0x0}
B_{2\rho}(y)\subset B_{6\rho}(y_0)\subset B_{600\rho}(y_0)\subset B_{605\rho}(x_0).
\end{equation}

Since $\rho<R_0/600$ by Corollary \ref{mainlocalestimate-boundary} for any $\eta\in (0,1)$ there is a small $\delta=\delta(\eta)>0$ 
such that if $\Om$ is $(\delta, R_0)$-Reifenberg flat then one can find
a constant $s=s(n, p, \alpha, \beta)>1$ and a function $V\in W^{1,\, p}(B_{6\rho}(y_0))\cap W^{1,\, \infty}(B_{6\rho}(y_0))$ with
$$\norm{\nabla V}_{L^{\infty}(B_{6\rho}(y_0))}\leq C \fint_{B_{600\rho}(y_0)}|\nabla u| dx + C F(p, \nabla u,\mu, B_{600\rho}(y_0))$$
and 
\begin{eqnarray*}
\lefteqn{\fint_{B_{6\rho}(y_0)} |\nabla u-\nabla V|dx}\\
&\leq&   C \left(\eta + ([\mathcal{A}]^{R_0}_{s})^{\min\{1, \frac{1}{p-1}\}} \right)  \fint_{B_{600\rho}(y_0)}|\nabla u|dx +\\
&& +\,  C \left(\eta + 1+ ([\mathcal{A}]^{R_0}_{s})^{\min\{1, \frac{1}{p-1}\}} \right)  F(p, 
\nabla u,\mu, B_{600\rho}(y_0)).
\end{eqnarray*}

Thus it follows from \eqref{MaxCon} and \eqref{yy0x0} that 
\begin{eqnarray}\label{VC_0bound}
\lefteqn{\norm{\nabla V}_{L^{\infty}(B_{2\rho}(y))}\leq \norm{\nabla V}_{L^{\infty}(B_{6\rho}(y_0))}}\\
&\leq& C \fint_{B_{605\rho}(x_0)}|\nabla u| dx + C F(p, \nabla u,\mu, B_{605\rho}(x_0))\nonumber\\
&\leq& C_0,\nonumber
\end{eqnarray}
and also
\begin{eqnarray}\label{uVetadelta}
\lefteqn{\fint_{B_{2\rho}(y)} |\nabla u-\nabla V|dx\leq C \fint_{B_{6\rho}(y_0)} |\nabla u-\nabla V|dx}\\
&\leq&   C \left(\eta + ([\mathcal{A}]^{R_0}_{s})^{\min\{1, \frac{1}{p-1}\}} \right)  \fint_{B_{605\rho}(x_0)}|\nabla u|dx +\nonumber\\
&& +\,  C \left(\eta+ 1+ ([\mathcal{A}]^{R_0}_{s})^{\min\{1, \frac{1}{p-1}\}} \right)  F(p, \nabla u,\mu, B_{605\rho}(x_0)) \nonumber\\
&\leq&   C \left(\eta + ([\mathcal{A}]^{R_0}_{s})^{\min\{1, \frac{1}{p-1}\}} \right)  +\nonumber\\
&&+\,  C \left(\eta+ 1+ ([\mathcal{A}]^{R_0}_{s})^{\min\{1, \frac{1}{p-1}\}} \right)(\delta +\delta^{p-1}). \nonumber
\end{eqnarray}
Here $C_0$ and $C$ depend only on $n, p, \alpha,$ and $\beta$. In view of \eqref{localizemax} and \eqref{VC_0bound} we see that 
for $\Lambda =\max\{3^n, 2C_0\}$ there holds
\begin{eqnarray*}
\lefteqn{|\{ x\in \RR^n: {\rm\bf M}(|\nabla u|) > \Lambda\} \cap B_{\rho}(y)|}\\
&\leq& |\{ x\in \RR^n: {\rm\bf M}(\chi_{B_{2\rho}(y)}|\nabla u|) > \Lambda\} \cap B_{\rho}(y)|\nonumber\\
&\leq & \left|\{ x\in \RR^n: {\rm\bf M}(\chi_{B_{2\rho}(y)}|\nabla V|) > \Lambda/2\} \cap B_{\rho}(y)\right|\nonumber\\
&& +\, \left|\{ x\in \RR^n: {\rm\bf M}(\chi_{B_{2\rho}(y)}|\nabla u- \nabla V|) > \Lambda/2\} \cap B_{\rho}(y)\right| \nonumber\\
&\leq&  \left|\{ x\in \RR^n: {\rm\bf M}(\chi_{B_{2\rho}(y)}|\nabla u- \nabla V|) > \Lambda/2\} \cap B_{\rho}(y)\right|. \nonumber
\end{eqnarray*}

Thus by weak-type $(1,1)$ bound for the Hardy-Littlewood maximal function and inequality \eqref{uVetadelta} we find
\begin{eqnarray*}
\lefteqn{|\{ x\in \RR^n: {\rm\bf M}(|\nabla u|) > \Lambda\} \cap B_{\rho}(y)|}\\
&\leq& \frac{C}{\Lambda} \int_{B_{2\rho}(y)} |\nabla u-\nabla V|dx\\
&\leq& \frac{C}{C_0} |B_{2\rho}(y)| \left[C \left(\eta + ([\mathcal{A}]^{R_0}_{s})^{\min\{1, \frac{1}{p-1}\}} \right)\right.\\
&& \left. +\,  C \left(\eta+ 1+ ([\mathcal{A}]^{R_0}_{s})^{\min\{1, \frac{1}{p-1}\}} \right)(\delta +\delta^{p-1}) \right]\\
&<& \epsilon \, |B_{\rho}(y)|
\end{eqnarray*}
for any given $\epsilon>0$, provided $[\mathcal{A}]^{R_0}_{s}\leq \delta$ and $\eta, \delta$ are appropriately chosen.

This gives the estimate \eqref{lambdadecay} in the case $\overline{B_{4\rho}(y)}\cap \partial\Om\not=\emptyset$.
The case   $B_{4\rho}(y)\Subset\Om$ can be done in a similar way using Corollary \ref{mainlocalestimate-interior} instead 
of Corollary \ref{mainlocalestimate-boundary}.
\end{proof}

%%%%%%%%%%%%%%%%%%%%%%%%%%%%%%%%%%%%%%%%%%%%%%%%%%%%%%%%%%%%%%%%%%%%%%%%%%%

We next obtain a weighted version of Proposition \ref{Byun-Wang-int-bdry}. 

\begin{proposition}\label{weightedByun-Wang}
Let $w$ be an $A_\infty$ weight in $\RR^n$ and let $\mathcal{A}$ satisfy \eqref{sublinear}-\eqref{ellipticity}. There exist constants $\Lambda, s>1$,  depending only on $n, p, \alpha,$ and $\beta$, such that the following holds. 
For any $\epsilon>0$ there exists a small $\delta=\delta(n, p,\alpha, \beta, \epsilon, [w]_{A_\infty})>0$ such that if $u\in W_{0}^{1,\, p}(\Om)$ is a weak solution of 
\eqref{basicpde} where $\Om$ is $(\delta, R_0)$-Reifenberg flat,  $[\mathcal{A}]_s^{R_0}\leq \delta$,  and  
\begin{equation*}
B_{\rho}(y) \cap \{x\in \RR^n: {\rm\bf M}(|\nabla u|)\leq 1\}\cap \{ x\in \RR^n: {\rm\bf M}_{1}(\mu)^{\frac{1}{p-1}}\leq \delta \}\neq \emptyset
\end{equation*}
for some ball $B_{\rho}(y)$ with $\rho< R_0/600$, then there holds
\begin{equation*}
w(\{ x\in \RR^n: {\rm\bf M}(|\nabla u|) > \Lambda\} \cap B_{\rho}(y))< \epsilon \, w(B_{\rho}(y)).
\end{equation*}
\end{proposition}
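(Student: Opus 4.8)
The plan is to deduce this weighted estimate directly from the unweighted Proposition \ref{Byun-Wang-int-bdry} by invoking the defining property of $A_\infty$ weights in Definition \ref{inversedoubling}. The constants $\Lambda$ and $s$ will be exactly those produced by Proposition \ref{Byun-Wang-int-bdry}, so they continue to depend only on $n, p, \alpha, \beta$; only the smallness parameter $\delta$ will acquire a dependence on $[w]_{A_\infty}$, and this will come solely through a preliminary choice of an auxiliary (Lebesgue-measure) threshold.

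Concretely, fix $\epsilon > 0$ and let $(A, \nu) = [w]_{A_\infty}$ denote the $A_\infty$ constants of $w$. Set $\epsilon_1 := (\epsilon/A)^{1/\nu} > 0$, a quantity depending only on $\epsilon$ and $[w]_{A_\infty}$. Apply Proposition \ref{Byun-Wang-int-bdry} with $\epsilon$ there replaced by $\epsilon_1$: this produces $\delta = \delta(n,p,\alpha,\beta,\epsilon_1) = \delta(n,p,\alpha,\beta,\epsilon,[w]_{A_\infty}) > 0$ such that, under the stated hypotheses on $u$, $\Om$, $\mathcal{A}$, and the nonemptiness condition for the ball $B_\rho(y)$ with $\rho < R_0/600$, one has
\[
|\{ x\in \RR^n: {\rm\bf M}(|\nabla u|) > \Lambda\} \cap B_{\rho}(y)| < \epsilon_1\, |B_{\rho}(y)|.
\]
Now set $E := \{ x\in \RR^n: {\rm\bf M}(|\nabla u|) > \Lambda\} \cap B_{\rho}(y)$, a measurable subset of the ball $B_\rho(y)$. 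Applying Definition \ref{inversedoubling} to the pair $(E, B_\rho(y))$ together with the measure bound just obtained gives
\[
w(E) \le A \left( \frac{|E|}{|B_\rho(y)|} \right)^{\nu} w(B_\rho(y)) < A\, \epsilon_1^{\nu}\, w(B_\rho(y)) = \epsilon\, w(B_\rho(y)),
\]
which is precisely the asserted inequality.

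There is no genuine obstacle here: the entire content of the proposition is the unweighted version of Proposition \ref{Byun-Wang-int-bdry} combined with the reverse-doubling inequality of Definition \ref{inversedoubling}, which quantitatively converts Lebesgue-measure smallness of $E$ inside $B_\rho(y)$ into $w$-measure smallness. The only point requiring care is the order of quantifiers: one must select $\epsilon_1$ — hence fix the exponent loss governed by $\nu$ — \emph{before} invoking Proposition \ref{Byun-Wang-int-bdry}, so that the resulting $\delta$ legitimately depends on $[w]_{A_\infty}$ through $\epsilon_1$, while $\Lambda$ and $s$ remain the structural constants of the unweighted statement. This is exactly the dependence of constants recorded in the statement of the proposition.
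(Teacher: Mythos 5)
Your proposal is correct and follows essentially the same route as the paper: the paper likewise applies Proposition \ref{Byun-Wang-int-bdry} with $\epsilon$ replaced by a power-adjusted threshold (it uses $(\epsilon/(2A))^{1/\nu}$ rather than your $(\epsilon/A)^{1/\nu}$, an inessential difference) and then converts the Lebesgue-measure smallness into $w$-measure smallness via Definition \ref{inversedoubling}, keeping $\Lambda$ and $s$ as the structural constants and letting only $\delta$ depend on $[w]_{A_\infty}$.
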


\begin{proof}
Suppose that $(A, \nu)$ is a pair of $A_\infty$ constants of $w$.  Let $\epsilon> 0$ be given and choose $\delta =\delta(\epsilon, A, \nu)$ as in Proposition \ref{Byun-Wang-int-bdry} with $(\epsilon/(2A))^{1/\nu}$ replacing $\epsilon$. Then there exist  $\Lambda, s>1$,  
depending only on $n, p, \alpha,$ and $\beta$, such that if $\Om$ is $(\delta, R_0)$-Reifenberg flat and  $\mathcal{A}$ satisfies  the
$(\delta, R_0)$-BMO condition with exponent $s$, then there holds 
\begin{equation}\label{lebesgue}
|\{ x\in \RR^n: {\rm\bf M}(|\nabla u|)> \Lambda\} \cap B_{\rho}(y)| \leq \left(\frac{\epsilon}{2A}\right) ^{1/\nu}|B_{\rho}(y)|.
\end{equation}

Thus,  using the $A_{\infty}$ characterization of $w$, we get from \eqref{lebesgue} that
\begin{eqnarray*}
\lefteqn{ w(\{ x\in \RR^n: {\rm\bf M}(|\nabla u|) > \Lambda\} \cap B_{\rho}(y))}\\
&\leq& A \left[ \frac{|\{ x\in \RR^n: {\rm\bf M}(|\nabla u|) > \Lambda\} \cap B_{\rho}(y)| }{|B_{\rho}(y)|}\right ]^{\nu} w(B_{\rho}(y))\\
&\leq&  \frac{\epsilon}{2} \, w(B_{\rho}(y)) < \epsilon \, w(B_{\rho}(y)).
\end{eqnarray*}

This completes the proof of the proposition.
\end{proof}

The following result is just the contrapositive of Proposition  \ref{weightedByun-Wang}.
\begin{proposition}\label{contra}
Let $w$ be an $A_\infty$ weight in $\RR^n$ and let $\mathcal{A}$ satisfy \eqref{sublinear}-\eqref{ellipticity}. There exist constants $\Lambda, s>1$,  depending only on $n, p, \alpha,$ and $\beta$, such that the following holds. 
For any $\epsilon>0$ there exists a small $\delta=\delta(n, p, \alpha, \beta, \epsilon, [w]_{A_\infty})>0$ such that if $u\in W_{0}^{1,\, p}(\Om)$ is a weak solution of 
\eqref{basicpde} where $\Om$ is $(\delta, R_0)$-Reifenberg flat,  $[\mathcal{A}]_s^{R_0}\leq \delta$,  and  
\begin{equation*}
w(\{ x\in \RR^n: {\rm\bf M}(|\nabla u|) > \Lambda\} \cap B_{\rho}(y))\geq \epsilon \, w(B_{\rho}(y))
\end{equation*}
for some ball $B_{\rho}(y)$ with $\rho< R_0/600$, then there holds
\begin{equation*}
B_{\rho}(y) \subset \{x\in \RR^n: {\rm\bf M}(|\nabla u|)> 1\}\cup \{ x\in \RR^n: {\rm\bf M}_{1}(\mu)^{\frac{1}{p-1}}> \delta \}.
\end{equation*}
\end{proposition}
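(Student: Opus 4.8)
The final statement, Proposition~\ref{contra}, is simply the contrapositive of Proposition~\ref{weightedByun-Wang}, so the proof is essentially a one-line logical manipulation once the latter is in hand.

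\medskip

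The plan is as follows. I would fix the constants $\Lambda, s>1$ (depending only on $n,p,\alpha,\beta$) and, for a given $\epsilon>0$, the corresponding $\delta=\delta(n,p,\alpha,\beta,\epsilon,[w]_{A_\infty})>0$ provided by Proposition~\ref{weightedByun-Wang}. Now suppose $u\in W^{1,p}_0(\Om)$ is a weak solution of \eqref{basicpde}, $\Om$ is $(\delta,R_0)$-Reifenberg flat, $[\mathcal{A}]_s^{R_0}\leq\delta$, and that for some ball $B_\rho(y)$ with $\rho<R_0/600$ one has
\[
w(\{x\in\RR^n: {\rm\bf M}(|\nabla u|)>\Lambda\}\cap B_\rho(y))\geq \epsilon\, w(B_\rho(y)).
\]
I claim the conclusion
\[
B_\rho(y)\subset \{x\in\RR^n:{\rm\bf M}(|\nabla u|)>1\}\cup\{x\in\RR^n:{\rm\bf M}_1(\mu)^{\frac{1}{p-1}}>\delta\}
\]
holds. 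Argue by contradiction: if the inclusion fails, there is a point in $B_\rho(y)$ lying in neither set, i.e.,
\[
B_\rho(y)\cap\{x\in\RR^n:{\rm\bf M}(|\nabla u|)\leq 1\}\cap\{x\in\RR^n:{\rm\bf M}_1(\mu)^{\frac{1}{p-1}}\leq\delta\}\neq\emptyset.
\]
But this is exactly the hypothesis \eqref{BWassum2} of Proposition~\ref{weightedByun-Wang}, whose conclusion then gives
\[
w(\{x\in\RR^n:{\rm\bf M}(|\nabla u|)>\Lambda\}\cap B_\rho(y))<\epsilon\, w(B_\rho(y)),
\]
contradicting our assumption. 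Hence the inclusion must hold, which completes the proof.

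\medskip

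There is essentially no obstacle here: everything is carried by Proposition~\ref{weightedByun-Wang}, and the only thing to check is that the set emptiness/nonemptiness dichotomy and the strict-versus-non-strict inequalities line up correctly under negation, which they do. One should just be slightly careful that the quantifier structure is preserved: the same $\Lambda$ and $s$ (depending only on $n,p,\alpha,\beta$) and the same $\delta$ (depending on $n,p,\alpha,\beta,\epsilon,[w]_{A_\infty}$) are inherited verbatim from Proposition~\ref{weightedByun-Wang}.
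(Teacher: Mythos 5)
Your proposal is correct and matches the paper exactly: the paper itself states that Proposition \ref{contra} is just the contrapositive of Proposition \ref{weightedByun-Wang}, and your argument simply spells out that logical equivalence while inheriting the same constants $\Lambda$, $s$, and $\delta$. Nothing further is needed.
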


Proposition \ref{contra} is designed so that the following technical lemma can be applied. 
A proof of this lemma, which  uses Lebesgue Differentiation Theorem and the
standard Vitali covering lemma, can be found in \cite{M-P1}. See also \cite{W, BW1}. In a sense this lemma plays a role similar to that of the 
famous  Calder\'on-Zygmund-Krylov-Safonov decomposition.

\begin{lemma}\label{vitalibdry}
 Let $\Omega $ be a $(\delta, R_0)$-Reifenberg flat domain with $\delta<1/8$, and let $w$ be an $A_{\infty}$
 weight.
 Suppose that $\{ B_{r}(y_i)\}_{i=1}^{L}$ is a sequence of balls with centers $y_i\in \Om$
 and  radius $r<R_0/4$  that covers $\Om$.  Let  $C\subset D\subset \Omega$
 be measurable sets for which there exists $0<\epsilon < 1 $ such that
\begin{enumerate}
\item $ w(C)< \epsilon\, w(B_{r}(y_i)) $ for all $i=1,\dots, L$, and
\item for all $y \in \Omega$ and $\rho \in (0, 2r]$, if  $w(C\cap B_{\rho}(y))\geq \epsilon\,
w(B_{\rho}(y))$, then $B_{\rho}(y)\cap \Omega \subset D.$
\end{enumerate}

Then there holds 
 \[ w(C) \leq B\, \epsilon \, w(D)\]
for a constant $B$ depending only on $n$ and $[w]_{A_\infty}$.
\end{lemma}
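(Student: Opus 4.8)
The plan is to run a weighted Calder\'on--Zygmund / Vitali stopping-time argument. First I would record three standard facts. Since $w$ is an $A_\infty$ weight it is a doubling measure with doubling constant controlled by $n$ and $[w]_{A_\infty}$; in particular $w(B_{5\rho}(x))\le C_1\, w(B_\rho(x))$ for all $x$ and $\rho$. Secondly, a standard equivalent formulation of the $A_\infty$ condition gives, for every $c_0\in(0,1)$, a constant $c_1=c_1(c_0,[w]_{A_\infty})>0$ such that $|E|\ge c_0|B|$ implies $w(E)\ge c_1\, w(B)$ for every ball $B$ and every measurable $E\subset B$. Thirdly, since $w\,dx$ is a doubling Borel measure on $\RR^n$, the Lebesgue differentiation theorem holds for it, so for $w$-a.e.\ $x\in C$ one has $\lim_{\rho\to0^+}w(C\cap B_\rho(x))/w(B_\rho(x))=1$.

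Next I would introduce a stopping radius. Let $C'\subset C$ be the full $w$-measure set of such density points, and for $x\in C'$ put $g_x(\rho):=w(C\cap B_\rho(x))/w(B_\rho(x))$, a continuous function of $\rho\in(0,\infty)$ with $\lim_{\rho\to0^+}g_x(\rho)=1>\epsilon$. Choosing $i$ with $x\in B_r(y_i)$ (possible because the $B_r(y_i)$ cover $\Om$) and using $B_r(y_i)\subset B_{2r}(x)\subset B_\rho(x)$ together with hypothesis~(1), we get, for every $\rho\ge 2r$,
\[
w(C\cap B_\rho(x))\le w(C)<\epsilon\, w(B_r(y_i))\le \epsilon\, w(B_\rho(x)),
\]
i.e.\ $g_x(\rho)<\epsilon$ for all $\rho\ge 2r$. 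Hence $\rho_x:=\sup\{\rho\in(0,2r):g_x(\rho)\ge\epsilon\}$ is well defined, and by continuity $0<\rho_x<2r$, $g_x(\rho_x)\ge\epsilon$, and $g_x(\rho)<\epsilon$ for every $\rho>\rho_x$. From $g_x(\rho_x)\ge\epsilon$ and $\rho_x\le 2r$, hypothesis~(2) with $y=x\in\Om$ gives $B_{\rho_x}(x)\cap\Om\subset D$. I would then apply the basic $5r$-covering (Vitali) lemma to $\{B_{\rho_x}(x):x\in C'\}$ (radii bounded by $2r$) to obtain a countable pairwise disjoint subfamily $\{B_{\rho_{x_j}}(x_j)\}_j$ with $C'\subset\bigcup_j B_{5\rho_{x_j}}(x_j)$.

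Finally I would assemble the estimate. Since $x_j\in\Om$, $\rho_{x_j}<2r<R_0/2$, $\Om$ is $(\delta,R_0)$-Reifenberg flat and $\delta<1/8$, a short geometric argument yields a dimensional $c_0>0$ with $|B_{\rho_{x_j}}(x_j)\cap\Om|\ge c_0\,|B_{\rho_{x_j}}(x_j)|$: if ${\rm dist}(x_j,\partial\Om)\ge\rho_{x_j}/2$ then $B_{\rho_{x_j}/2}(x_j)\subset\Om$, while otherwise a point $z\in\partial\Om$ with $|x_j-z|<\rho_{x_j}/2$ satisfies $B_{\rho_{x_j}/2}(z)\subset B_{\rho_{x_j}}(x_j)$ and the boundary density estimate for $\Om$ applies at $z$ at scale $\rho_{x_j}/2<R_0$. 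Combining disjointness of the balls, $g_{x_j}(5\rho_{x_j})<\epsilon$, the doubling bound, the $A_\infty$ lower bound, and $B_{\rho_{x_j}}(x_j)\cap\Om\subset D$, we reach
\begin{align*}
w(C)=w(C') &\le \sum_j w\big(C\cap B_{5\rho_{x_j}}(x_j)\big)< \epsilon\sum_j w\big(B_{5\rho_{x_j}}(x_j)\big)\\
&\le \epsilon\, C_1\sum_j w\big(B_{\rho_{x_j}}(x_j)\big)\le \frac{\epsilon\, C_1}{c_1}\sum_j w\big(B_{\rho_{x_j}}(x_j)\cap\Om\big)\\
&= \frac{\epsilon\, C_1}{c_1}\, w\Big(\bigcup_j\big(B_{\rho_{x_j}}(x_j)\cap\Om\big)\Big)\le B\,\epsilon\, w(D),
\end{align*}
with $B:=C_1/c_1$ depending only on $n$ and $[w]_{A_\infty}$. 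The two delicate points are the choice of the stopping radius $\rho_x$ --- it must land strictly below $2r$ so that hypothesis~(2) can be invoked, while the $w$-density of $C$ has already dropped below $\epsilon$ at every larger scale, in particular at the enlarged scale $5\rho_x$, which is exactly what produces the gain of a factor $\epsilon$ --- and the replacement of $w(B_{\rho_x}(x))$ by $w(B_{\rho_x}(x)\cap\Om)$ for balls meeting $\partial\Om$, where the Reifenberg flatness of $\Om$ and the doubling/$A_\infty$ structure of $w$ both enter. (This is the argument of \cite{M-P1}; see also \cite{W,BW1}.)
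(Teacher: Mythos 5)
Your argument is correct and is essentially the proof the paper points to: the paper itself defers to \cite{M-P1}, describing the proof as a combination of the Lebesgue Differentiation Theorem (for the doubling measure $w\,dx$) and the Vitali covering lemma, which is exactly your stopping-radius construction at $w$-density points of $C$, followed by the $5r$-covering selection, the hypothesis (2) applied at scale $\rho_x\le 2r$, and the interior/boundary density estimate from Reifenberg flatness combined with standard $A_\infty$ facts (doubling and the converse smallness property, with constants controlled by $n$ and $[w]_{A_\infty}$). The only points you gloss over — positivity of $w$ on balls and the effective dependence of the doubling and $A_p$-type constants on the pair $(A,\nu)$ of Definition \ref{inversedoubling} — are standard and are used in the same spirit elsewhere in the paper.
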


Proposition \ref{contra} and Lemma \ref{vitalibdry} yield the following result.
\begin{theorem}\label{technicallemma}
With $\mathcal{A}$ satisfying \eqref{sublinear}-\eqref{ellipticity}, let $w$ be an $A_\infty$ weight and let 
$s$, $\Lambda >1$ be as in  Proposition \ref{contra}.
Then for any $\epsilon>0$ there exists $\delta=\delta(n, p, \alpha, \beta, \epsilon, [w]_{A_\infty})>0$ such that the following holds.
Suppose that $u \in W^{1, \, p}_{0}(\Omega)$ is a weak solution of (\ref{basicpde}) 
in a $(\delta, R_0)$-Reifenberg flat domain $\Om$
and  $[\mathcal{A}]_s^{R_0}\leq \delta$.
Suppose also that  $\{ B_{r}(y_i)\}_{i=1}^{L}$ is a sequence of balls with centers $y_i\in \Om$ and a common  radius $0<r< R_0/1200$ that covers $\Omega$. If
for all $i=1,\dots, L$
\begin{equation}\label{hypo1bdry}
w(\{x\in \Om: {\rm\bf M}(|\nabla u|) > \Lambda\}) < \epsilon \, w(B_{r}(y_{i})),
\end{equation}
then for any $t>0$ and any integer $k\geq 1$ there holds
\begin{eqnarray*}
\lefteqn{w(\{x\in \Omega: {\rm\bf M}(|\nabla u|) > \Lambda ^{k}\})^{t}}\\
&\leq& \sum_{i=1}^{k} (B \, \epsilon^t)^{i} \, w(\{ x\in \Omega : {\rm\bf M}_1(\mu)^{\frac{1}{p-1}} > \delta \Lambda^{k-i}\})^{t}\\
&& + \, (B \, \epsilon^t)^{k} \, w(\{x\in \Omega : {\rm\bf M}(|\nabla u|) > 1\})^{t},
\end{eqnarray*}
where the constant $B=B(n, t, [w]_{A_\infty})$.
\end{theorem}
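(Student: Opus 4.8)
The plan is to iterate Proposition \ref{contra} via Lemma \ref{vitalibdry} and then sum a geometric-type series. First I would set up the induction on $k$. For the base case $k=1$, apply Lemma \ref{vitalibdry} with $C = \{x\in\Om: {\rm\bf M}(|\nabla u|) > \Lambda\}$ and $D = \{x\in\Om: {\rm\bf M}(|\nabla u|) > 1\} \cup \{x\in\Om: {\rm\bf M}_1(\mu)^{1/(p-1)} > \delta\}$. Hypothesis (1) of the lemma is exactly \eqref{hypo1bdry} (noting $2r < R_0/600$ so Proposition \ref{contra} applies at scales $\rho \le 2r$), and hypothesis (2) is precisely the content of Proposition \ref{contra}. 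The lemma then gives $w(C) \le B\epsilon\, w(D)$, and since $w(D) \le w(\{{\rm\bf M}(|\nabla u|)>1\}) + w(\{{\rm\bf M}_1(\mu)^{1/(p-1)} > \delta\})$, this is the $k=1$ instance of the claimed bound after using the elementary inequality $(a+b)^t \le C_t(a^t + b^t)$ — here one must absorb the constant $C_t$ into $B$, which is allowed since $B$ is permitted to depend on $t$.

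For the inductive step, the key observation is a scaling/normalization trick: if $u$ solves \eqref{basicpde} then $\tilde u = u/\Lambda$ solves the same type of equation with $\mathcal{A}$ replaced by $\tilde{\mathcal{A}}(x,\xi) = \mathcal{A}(x,\Lambda\xi)/\Lambda^{p-1}$ and $\mu$ replaced by $\tilde\mu = \mu/\Lambda^{p-1}$. The rescaled nonlinearity $\tilde{\mathcal{A}}$ satisfies \eqref{sublinear}-\eqref{ellipticity} with the same structural constants, and $[\tilde{\mathcal{A}}]_s^{R_0} = [\mathcal{A}]_s^{R_0} \le \delta$. Moreover ${\rm\bf M}(|\nabla \tilde u|) = {\rm\bf M}(|\nabla u|)/\Lambda$ and ${\rm\bf M}_1(\tilde\mu)^{1/(p-1)} = {\rm\bf M}_1(\mu)^{1/(p-1)}/\Lambda$. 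Applying the already-established estimate one level (i.e. the result for exponent $k-1$, or really applying Proposition \ref{contra} plus Lemma \ref{vitalibdry} once more at the normalized level) to $\tilde u$ relates $w(\{{\rm\bf M}(|\nabla u|) > \Lambda^k\})$ to $w(\{{\rm\bf M}(|\nabla u|) > \Lambda^{k-1}\})$ plus a term involving $w(\{{\rm\bf M}_1(\mu)^{1/(p-1)} > \delta\Lambda^{k-1}\})$. I would need to check that the hypothesis \eqref{hypo1bdry}, after rescaling, still holds at every level — this works because $\{{\rm\bf M}(|\nabla \tilde u|) > \Lambda\} = \{{\rm\bf M}(|\nabla u|) > \Lambda^2\} \subset \{{\rm\bf M}(|\nabla u|) > \Lambda\}$, so the hypothesis is monotone in the right direction.

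Carrying out the recursion, write $a_j = w(\{x\in\Om: {\rm\bf M}(|\nabla u|) > \Lambda^j\})^t$ and $b_j = w(\{x\in\Om: {\rm\bf M}_1(\mu)^{1/(p-1)} > \delta\Lambda^j\})^t$. The one-step estimate (raised to the $t$-th power and using $(a+b)^t \le C_t(a^t+b^t)$, with the constant folded into $B$) reads $a_k \le B\epsilon^t\,(a_{k-1} + b_{k-1})$. Iterating this from $k$ down to $0$ gives $a_k \le \sum_{i=1}^k (B\epsilon^t)^i\, b_{k-i} + (B\epsilon^t)^k a_0$, which is exactly the asserted inequality. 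The main obstacle I anticipate is bookkeeping the constant $B$: each application of the elementary power inequality $(a+b)^t \le C_t(a^t + b^t)$ introduces a $t$-dependent factor, and one must be careful that iterating does not let these accumulate into something worse than a single constant $B = B(n,t,[w]_{A_\infty})$ raised to the power $i$ at level $i$. The clean way is to prove the one-step inequality $a_k \le B\epsilon^t(a_{k-1}+b_{k-1})$ with a fixed $B$ absorbing $C_t$ and the constant from Lemma \ref{vitalibdry}, and then the geometric iteration is automatic. A secondary point to verify carefully is that the radius constraint $r < R_0/1200$ guarantees $2r < R_0/600$, so that Proposition \ref{contra} is applicable at all scales $\rho \in (0, 2r]$ needed by hypothesis (2) of Lemma \ref{vitalibdry}.
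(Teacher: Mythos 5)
Your proposal is correct and takes essentially the same route as the paper: the case $k=1$ is obtained exactly as you describe from Proposition \ref{contra} and Lemma \ref{vitalibdry} with the sets $C=\{{\rm\bf M}(|\nabla u|)>\Lambda\}$ and $D=\{{\rm\bf M}(|\nabla u|)>1\}\cup\{{\rm\bf M}_1(\mu)^{1/(p-1)}>\delta\}$ (absorbing the $t$-power constant into $B$), and the higher levels come from the normalization $u/\Lambda$, $\mu/\Lambda^{p-1}$, using $\Lambda>1$ to keep hypothesis \eqref{hypo1bdry}. Your one-step recursion $a_k\le B\,\epsilon^t(a_{k-1}+b_{k-1})$, iterated, is just a repackaging of the paper's induction, which applies the inductive hypothesis to the rescaled solution and then the case $k=1$ to the remaining term.
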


\begin{proof} 
The theorem will be proved by induction in $k$. Given $\epsilon > 0$, we take $\delta=\delta(\epsilon, [w]_{A_\infty})$ as in Proposition \ref{contra}. The case $k=1$ follows from 
Proposition \ref{contra} and Lemma  \ref{vitalibdry}. Indeed, let 
$$C = \{ x\in \Omega: {\rm\bf M}(|\nabla u|) > \Lambda\}$$
 and
$$D =  \{x\in \Omega: {\rm\bf M}(|\nabla u|) > 1 \}\cup \{ x\in \Omega: [{\rm\bf M}_{1}(\mu)]^{\frac{1}{p-1}} > \delta \}.$$

Then  from assumption (\ref{hypo1bdry}) it follows that
$w(C) < \epsilon \, w(B_{r}(y_{i}))$
for all $i=1,\dots, L$.  Moreover, if $y\in \Omega$ and $\rho \in (0, 2r)$ such that $w(C\cap B_{\rho}(y) ) \geq \epsilon\, w(B_{\rho}(y)), $
then $0< \rho < R_0/600$ and  $B_{\rho}(y) \cap \Omega\subset D$ by Proposition \ref{contra}. Thus  the hypotheses of Lemma \ref{vitalibdry} 
are satisfied which yield
\begin{eqnarray*}
w(C)^{t}&\leq& B(n, t, [w]_\infty)\, \epsilon^{t}\, w(D)^{t}\\
&\leq& B(n, t, [w]_\infty)\, \epsilon^{t}\, w(\{x\in \Omega: {\rm\bf M}(|\nabla u|) > 1 \})^t +\\
&& +\, B(n, t, [w]_\infty)\, \epsilon^{t}\,  w(\{ x\in \Omega: {\rm\bf M}_1(\mu)^{\frac{1}{p-1}} > \delta\})^t. 
\end{eqnarray*}

This proves the case $k=1$. Suppose now that the conclusion of the lemma holds for some $k>1$. Normalizing $u$ to
$u_{\Lambda}  = u/\Lambda$ and $\mu_{\Lambda} = \mu/\Lambda^{p-1}$, we see that for every $i=1, \dots, L$ there holds 
\[
\begin{split}
w(\{ x\in \Omega:  {\rm\bf M}(|\nabla u_{\Lambda}|) > \Lambda \} )
&= w(\{ x\in \Omega : {\rm\bf M}(|\nabla u|) > \Lambda^{2} \} )\\
&\leq w(\{ x\in \Omega : {\rm\bf M}(|\nabla u|) > \Lambda \} )\\
 &< \epsilon \, w(B_{r}(y_{i})).
\end{split}
\]
Here we  used   $\Lambda>1$ in the first inequality. By inductive hypothesis it follows that
\begin{eqnarray*}
\lefteqn{w(\{ x\in \Omega : {\rm\bf M}(|\nabla u|) > \Lambda^{k+1}\})^t}\\
&=& w(\{ x\in \Omega : {\rm\bf M}(|\nabla u_{\Lambda}|) > \Lambda^{k} \})^t \nonumber  \\
&\leq& \sum_{i=1}^{k} (B\, \epsilon^t)^{i} \, w(\{ x\in \Omega: {\rm\bf M}_1(\mu_{\Lambda})^{\frac{1}{p-1}} > \delta \Lambda^{k-i}\})^t \nonumber\\
&& + \, (B\, \epsilon^t)^{k} \, w(\{ x\in \Omega: {\rm\bf M}(|\nabla u_{\Lambda}|) > 1 \})^t \nonumber\\
&=& \sum_{i=1}^{k} (B\, \epsilon^t)^{i} \, w(\{ x\in \Omega: {\rm\bf M}_1(\mu)^{\frac{1}{p-1}} > \delta\Lambda^{k+1-i}\})^t \nonumber \\
&& + \, (B\, \epsilon^t)^{k} \, w(\{ x\in \Omega: {\rm\bf M}(|\nabla u|) > \Lambda \})^t. \nonumber
\end{eqnarray*}

Now  applying the case $k = 1$ to the last term  we conclude that
\begin{eqnarray*}
\lefteqn{w(\{ x\in \Omega : {\rm\bf M}(|\nabla u|) > \Lambda^{k+1} \})^t}\\
&\leq& \sum_{i=1}^{k+1} (B\, \epsilon^t)^{i} \, w(\{ x\in \Omega: {\rm\bf M}_{1}(\mu)^{\frac{1}{p-1}} > \delta \Lambda^{k+1-i}\})^t\\
&& + \, (B\, \epsilon^t)^{k+1 } \, w(\{ x\in \Omega: {\rm\bf M}(|\nabla u|) > 1 \})^t.
\end{eqnarray*}

This completes the proof of the theorem.
\end{proof}

\section{Global Muckenhoupt-Wheeden type  estimates}\label{sec5}

We are now ready to prove the first main theorem of the paper.
\subsection*{Proof of Theorem \ref{main}}  We will consider only the case $t\not=\infty$ as for
$t=\infty$ the proof is just similar.  We begin by selecting
a finite collection of points $\{y_{i}\}_{i=1}^{L}\subset \Omega$
and a ball $B_0$ of radius $2 {\rm diam}(\Om)$ such that
\[
\Omega \subset \bigcup_{i= 1}^{L} B_{r}(y_{i})\subset
B_0,
\]
where $r= \min\{R_0/1200, {\rm diam}(\Om)\}$. We claim that we can choose $N$
large  such that
for $u_{N} = u/N$ and for all $ i = 1,\dots, L$
\begin{equation}\label{smallness}
w(\{ x\in \Omega: {\rm\bf M}(|\nabla u_{N}|)(x) > \Lambda
\}) < \epsilon \, w(B_{r}(y_{i})).
\end{equation}
Here $\Lambda=\Lambda(n, p, \alpha, \beta)$ is as in Proposition \ref{contra}. To show this we may assume that  $\|\nabla u\|_{L^{1}(\Omega)} >0$. Then by weak type $(1, 1)$ estimate for the  maximal function  there exists a constant $C_{{\rm\bf M}}=C_{{\rm\bf M}}(n)>0$  such that  
\begin{equation*}
|\{x \in \Omega: {\rm\bf M}(|\nabla u_{N}|)(x) > \Lambda\}|
< \frac{C_{{\rm\bf M}}}{\Lambda N}\int_{\Omega}|\nabla u|dx. 
\end{equation*}

By Definition \ref{inversedoubling} this yields
\begin{eqnarray*}
\lefteqn{w(\{ x\in \Omega: {\rm\bf M}(|\nabla u_{N}|)(x) > \Lambda \})}\\
&<& A\left[\frac{C_{{\rm\bf M}}}{\Lambda N |B_0|}\int_{\Omega}|\nabla u|dx\right]^{\nu} w(B_0)\\
&=&A\left[\frac{C_{{\rm\bf M}}}{\Lambda N |B_0|}\int_{\Omega}|\nabla u|dx\right]^{\nu} \frac{w(B_0)}{w(B_r(y_i))}w(B_r(y_i)),
\end{eqnarray*}
where  $(A, \nu)$ is a pair of $A_{\infty}$ constants of $w$. Also, as $w\in A_{\infty}$, it is known that (see, e.g., \cite{Gra}) there exist $C_1=C_1(n, A, \nu)$ and 
$p_1=p_1(n, A, \nu)>1$ such that 
\[ \frac{w(B_0)}{w(B_{r}(y_{i}))} \leq C_1 \left[\frac{|B_0|}{|B_r(y_i)|} \right]^{p_1}. \]

Thus one obtains \eqref{smallness}  provided we choose  $N>0$ so  that 
\begin{eqnarray}\label{Neqn}
 \frac{C_{{\rm\bf M}}}{\Lambda N |B_0|}\int_{\Omega}|\nabla u|dx  &=& \left(\frac{\epsilon}{C_1 A} \right)^{1/\nu} \left[\frac{|B_r(y_i)|}{|B_0|} \right]^{p_1/\nu}\\
 &=& \left(\frac{\epsilon}{C_1 A} \right)^{1/\nu} \left[r/2{\rm diam}(\Om) \right]^{n p_1/\nu}. \nonumber
\end{eqnarray}

Note that for  $0<t<\infty$ we have 
$$C^{-1} S\leq \norm{{\rm\bf M}(|\nabla u_{N}|)}^{t}_{L_{w}^{q,\, t}(\Om)}\leq C(w(\Om)^{\frac{t}{q}}+S),$$
where $S$ is the sum
\begin{equation*}
S= \sum_{k = 1}^{\infty}{\Lambda}^{t k}w(\{ x\in \Om: {\rm\bf M}(|\nabla u_{N}|)(x) > {\Lambda}^{k} \})^{\frac{t}{q}}.
\end{equation*}

By  \eqref{smallness} and  Theorem \ref{technicallemma} we find, with $\mu_N=\mu/N^{p-1}$, 
\begin{eqnarray*}
S &\leq & \sum_{k = 1}^{\infty} \Lambda^{tk} \left[ \sum_{j=1}^{k} (B\, \epsilon^{\frac{t}{q}})^{j}w(\{ x\in \Om: {\rm\bf M}_1(\mu_{N})^{\frac{1}{p-1}} > \delta \Lambda^{k-j}\})^{\frac{t}{q}} \right]\\
&& + \sum_{k = 1}^{\infty} \Lambda^{tk} (B\, \epsilon^{\frac{t}{q}})^{k}  w(\{x\in \Om: {\rm\bf M}(|\nabla u_{N}|)(x) > 1\})^{\frac{t}{q}}.
\end{eqnarray*}
Here the constants $B=B(n, t/q, [w]_{A_\infty})$, $\epsilon=\Lambda^{-q} B^{-q/t} 2^{-q/t}$, and $\delta=\delta(n, p, \alpha, \beta, \epsilon, [w]_{A_\infty})$ is determined by 
Theorem \ref{technicallemma} which ultimately depends only on $n, p, \alpha, \beta, t, q,$ and $[w]_{A_\infty}$.

Thus  we have 
\begin{eqnarray*}
S &\leq & \sum_{j=1}^{\infty} (\Lambda^{t}  B\, \epsilon^{\frac{t}{q}})^{j}  \left[ \sum_{k=j}^{\infty} \Lambda^{t(k-j)}w(\{ x\in \Om: {\rm\bf M}_1(\mu_{N})^{\frac{1}{p-1}} > \delta \Lambda^{k-j}\})^{\frac{t}{q}}\right]\\
&&+\sum_{k=1}^{\infty} (\Lambda^{t}  B\, \epsilon^{\frac{t}{q}})^{k}w(\{x\in \Om: {\rm\bf M}(|\nabla u_{N}|)(x) > 1\})^{\frac{t}{q}}\\
&\leq& C\left [\|{\rm\bf M}_1(\mu_{N})\|^{t}_{L^{q,\, t}_{w}(\Om)}+ w(\Om)^{\frac{t}{q}}\right]\sum_{k=1}^{\infty}(\Lambda^{t}  B\, \epsilon^{\frac{t}{q}})^{k}\\
&\leq& C \left [ \|{\rm\bf M}_1(\mu_{N})\|^{t}_{L^{q,\, t}_{w}(\Om)}+ w(\Om)^{\frac{t}{q}} \right],
\end{eqnarray*}
by our choice of $\epsilon$. Here $C$ depends only on  $n, p, \alpha, \beta, q,  t$, and   $[w]_{A_\infty}$.

Thus  we obtain 
\[
\|{\rm\bf M}(|\nabla u_{N}|)\|^{t}_{L^{q,\, t}_{w}(\Om)}\leq C\left [w(\Om)^{\frac{t}{q}} + \|{\rm\bf M}_1(\mu_{N})^{\frac{1}{p-1}}\|^{t}_{L^{q,\, t}_{w}(\Om)}\right].
\]

 This gives 
\begin{equation}\label{wNpp}
\|\nabla u\|_{L^{q,\, t}_{w}(\Om)}\leq C\left[ N w(\Om)^{\frac{1}{q}} + \|{\rm\bf M}_1(\mu)^{\frac{1}{p-1}}\|_{L^{q,\, t}_
{w}(\Om)}\right].
\end{equation}

On the other hand, by  \eqref{Neqn} and the condition $p> 2-\frac{1}{n}$ we get 
\begin{eqnarray*}
N &\leq& C\, |B_0|^{-1}  \|\nabla u\|_{L^{1}(\Omega)}\\
&\leq& C\, {\rm diam}(\Om)^{-n} |\Om|^{1-\frac{n-1}{n(p-1)}} |\mu|(\Omega)^{\frac{1}{p-1}}\\
&\leq& C\,  \left[\frac{|\mu|(\Omega)}{{\rm diam}(\Om)^{n-1}}\right]^{\frac{1}{p-1}},
\end{eqnarray*}
where the second inequality follows from standard estimates for equations with measure data (see, e.g., \cite{BBGGPV, DMOP}). Thus for any $x\in \Om$ we have 
$$N\leq C\, [{\rm\bf M}_1(\mu)(x)]^{\frac{1}{p-1}},$$
where $C$ depends only on  $n, p, \alpha, \beta, q, t, [w]_{A_\infty}$, and ${\rm diam}(\Om)/R_0$.

Finally, combining  the last inequality with  \eqref{wNpp}  we obtain \eqref{B0bound} as desired.

\section{Applications to quasilinear Riccati type equations}\label{app}

The main goal of this section is to prove  Theorem \ref{main-Ric}. First, we  recall the definition of renormalized solutions.
Let $\mathcal{M}_{B}(\Om)$ be the set of all signed measures in $\Om$ with bounded total variations. We  denote by $\mathcal{M}_{0}(\Om)$ (respectively $\mathcal{M}_{s}(\Om)$) the set of all measures in
 $\mathcal{M}_{B}(\Om)$
which are absolutely continuous (respectively singular) with
respect to the capacity ${\rm cap}_{p}(\cdot,\Om)$. Here ${\rm cap}_{p}(\cdot,\Om)$ is the
capacity relative to the domain $\Om$ defined by
\begin{equation*}
{\rm cap}_{p}(K,\Om)=\inf\Big\{\int_{\Om}\m{\nabla \phi}^{p}dx: \phi\in C_{0}^{\infty}
(\Om), \phi\geq 1 {\rm ~on~} K \Big\}
\end{equation*}
for any compact set $K\subset\Om$. Thus every measure in $\mathcal{M}_{s}(\Om)$ is supported on a Borel set $E$ with ${\rm cap}_{p}(E,\Om)=0$.
Recall from  \cite[Lemma 2.1]{FST} that, for every measure $\mu$ in $\mathcal{M}_{B}
(\Om)$, there exists a unique pair of measures ($\mu_{0}, \mu_{s}$) with $\mu_{0}\in \mathcal
{M}_{0}(\Om)$ and  $\mu_{s}\in \mathcal{M}_{s}(\Om)$, such that $\mu=\mu_{0}+\mu_{s}$.

 For a measure $\mu$ in $\mathcal{M}_{B}(\Om)$, its positive and
negative parts are denoted by $\mu^{+}$ and $\mu^{-}$, respectively.
We say that  a sequence of measures $\{\mu_{k}\}$ in
$\mathcal{M}_{B}(\Om)$ converges in the narrow topology to $\mu \in
\mathcal{M}_{B}(\Om)$ if
$$\lim_{k\rightarrow\infty}\int_{\Om}\varphi \, d\mu_{k}=\int_{\Om}\varphi \,
d\mu$$
for every bounded and continuous function $\varphi$ on $\Om$. 

The notion of renormalized solutions is a generalization of that of entropy solutions introduced in \cite{BBGGPV} and \cite{BGO}, where the measure data are 
assumed to be  in $L^1(\Om)$ or in  $\mathcal{M}_{0}(\Om)$.
Several equivalent definitions of renormalized solutions
were  given  in \cite{DMOP}, two of which are  the following ones.

\begin{definition}\label{rns1}{\rm
Let $\mu \in \mathcal{M}_{B}(\Om)$. Then $u$ is said to be a renormalized
solution of
\begin{eqnarray}\label{liDirichlet}
\left\{\begin{array}{rcl}
-{\rm div}\,\mathcal{A}(x, \nabla u)&=&\mu \quad {\rm in}~ \Om,\\
u&=&0\quad {\rm on}~ \partial\Om,
\end{array}
\right.
\end{eqnarray}
if the following conditions hold:
\begin{itemize}

\item[(a)] The function $u$ is measurable and finite almost everywhere, and $T_{k}(u)$ belongs
to $W_{0}^{1,\,p}(\Om)$ for every $k>0$.

\item[(b)] The gradient $\nabla u$ of $u$ satisfies $\m{\nabla u}^{p-1}\in L^{q}(\Om)$ for all $q<\frac{n}{n-1}$.

\item[(c)] If $w$ belongs to $W_{0}^{1,\,p}(\Om)\cap L^{\infty}(\Om)$ and if there exist 
$w^{+\infty}$ and $w^{-\infty}$ in $W^{1,\,r}(\Om)\cap L^{\infty}(\Om)$, with $r>n$, such that
\begin{equation*}
\left\{\begin{array}{c}
w=w^{+\infty} \hspace*{.2in}{\rm a.e. ~on~ the~ set}~~ \{u>k\},\\
w=w^{-\infty} \hspace*{.2in}{\rm a.e. ~on ~the ~set}~~ \{u<-k\}
\end{array}
\right.
\end{equation*}
for some $k>0$ then
\begin{equation*}
\int_{\Om}\mathcal{A}(x, \nabla u)\cdot\nabla wdx=\int_{\Om}w d\mu_{0}+\int_{\Om}w^{+\infty}
d\mu_{s}^{+}-\int_{\Om}w^{-\infty}d\mu_{s}^{-}.
\end{equation*}
\end{itemize}
}\end{definition}

\begin{definition}\label{rns}{\rm
Let $\mu \in \mathcal{M}_{B}(\Om)$. Then  $u$ is a renormalized
solution of (\ref{liDirichlet})
if $u$ satisfies (a) and (b) in Definition \ref{rns1}, and if the following conditions hold:
\begin{itemize}
\item[(c)] For every $k>0$ there exist two nonnegative measures in $\mathcal{M}_{0}(\Om)$,
$\lambda^{+}_{k}$ and
$\lambda^{-}_{k}$, concentrated on the sets $\{u=k\}$ and $\{u=-k\}$, respectively, such that
$\lambda^{+}_{k}\rightarrow \mu_{s}^{+}$ and
$\lambda^{-}_{k}\rightarrow \mu_{s}^{-}$ in the narrow topology
of measures.

\item[(d)] For every $k>0$
\begin{equation}\label{truncate}
\int_{\{\m{u}<k\}}\mathcal{A}(x, Du)\cdot\nabla\varphi dx= \int_{\{\m{u}<k\}}\varphi
d\mu_{0} +  \int_{\Om}\varphi d\lambda_{k}^{+} -  \int_{\Om}\varphi d\lambda_{k}^{-}
\end{equation}
for every $\varphi$ in ${\rm W}_{0}^{1,\,p}(\Om)\cap L^{\infty}(\Om)$.
\end{itemize}
}\end{definition}

\begin{remark}\label{quasi}{\rm
By  \cite[Remark 2.18]{DMOP}, if $u$ is a renormalized solution of (\ref{liDirichlet})
then (the ${\rm cap}_{p}$-quasi continuous representative
of) $u$ is finite quasieverywhere with respect to ${\rm cap}_{p}(\cdot, \Om)$.
Therefore, $u$ is finite $\mu_{0}$-almost everywhere.
}\end{remark}
\begin{remark}\label{approx}{\rm
By \eqref{truncate}, if $u$ is a renormalized solution  of (\ref{liDirichlet}) then
\begin{equation*}
-{\rm div}\mathcal{A}(x,\nabla T_{k}(u))=\mu_{k} \quad {\rm in~} \Om,
\end{equation*}
with
$$\mu_{k}=\chi_{\{\m{u}<k\}}\mu_{0}+\lambda^{+}_{k}-\lambda^{-}_{k}.$$

Since
$T_{k}(u)\in W_{0}^{1,\,p}(\Om)$, by \eqref{sublinear} we see that
$\mu_{k}$ belongs to
the dual space of $W_{0}^{1,\,p}(\Om)$. Moreover, by Remark \ref{quasi},
$\m{u}<\infty$ $\mu_{0}$-almost everywhere and hence $\chi_{\{\m{u}<k\}} \rightarrow
\chi_{\Om}$ $\mu_{0}$-almost everywhere as $k\rightarrow\infty$. Therefore, $\mu_{k}$ (resp. $|\mu_k|$) converges to
$\mu$ (resp. $|\mu|$) in the narrow topology of measures as well.
}\end{remark}

Next,  recall that for a compact set $K\subset\RR^n$, the 
capacity ${\rm Cap}_{1,\, s}(K)$, $1<s<+\infty$, of $K$
associated to the Sobolev space $W^{1,\, s}(\RR^n)$ is defined
by
\begin{equation*}
{\rm Cap}_{1, \, s}(K)=\inf\Big\{\int_{\RR^n}(|\nabla \varphi|^s +\varphi^s) dx: \varphi\in C^\infty_0(\RR^n),
\varphi\geq \chi_K \Big\},
\end{equation*}
where $\chi_{K}$ is the characteristic function of $K$.

\begin{definition} Given  $s >1$ and a  domain $\Om\subset\RR^n$ we define the space $M^{1, \, s}(\Om)$ to be the set of all  signed measures
$\mu$ with bounded total variation in $\Om$ such that the quantity $[\mu]_{M^{1,\, s}(\Om)}<+\infty$, where
$$[\mu]_{M^{1,\, s}(\Om)}:=\sup\left\{ |\mu|(K\cap\Om)/{\rm Cap}_{1,\, s}(K): {\rm Cap}_{1, \, s}(K)>0 \right\},$$
with the supremum being taken over all compact sets $K\subset\RR^n$.
\end{definition}

\begin{theorem}\label{mainconsequence}
Let $2-1/n<p\leq n$ and  $m>1$.  Suppose that $\mathcal{A}$ satisfies \eqref{sublinear}-\eqref{ellipticity}. 
Then there exist constants $s=s(n, p, \alpha, \beta)>1$ and $\delta=\delta(n, p, \alpha, \beta, m)\in (0, 1)$ such that the following holds. 
If  $[\mathcal{A}]_{s}^{R_0}\leq \delta$ and $\Om$ is $(\delta, R_0)$-Reifenberg flat for some $R_0>0$, then for any renormalized solution $u$ to 
the boundary value problem  \eqref{basicpde} we have 
\begin{equation*}
\left[|\nabla u|^\frac{m(p-1)}{m-1}\right]_{M^{1, \, m}(\Om)} \leq C_0\,  [\mu]_{M^{1, \, m}(\Om)}^{\frac{m}{m-1}}.
\end{equation*}
Here  $C_0$ depends only on $n, p, \alpha, \beta, m, {\rm diam}(\Om)$,  and ${\rm diam}(\Om)/R_0$.
\end{theorem}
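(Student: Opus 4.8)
The plan is to deduce this Morrey–capacity estimate directly from the Muckenhoupt–Wheeden bound in Theorem \ref{main}, exploiting the flexibility in the choice of the weight $w$. Recall the well-known fact (Adams–Hedberg type theory) that for $1<m<\infty$ the capacity ${\rm Cap}_{1,\,m}$ is, up to multiplicative constants, comparable to a capacity defined via the Bessel potential $G_1$, and that a nonnegative function $g$ satisfies the Morrey-type bound $[g]_{M^{1,\,m}(\Om)}\le A$ if and only if the inequality
\begin{equation*}
\int_{\Om} g\, \varphi\, dx \le C A \int_{\RR^n}\bigl(|\nabla\varphi|^{m}+\varphi^{m}\bigr)dx
\end{equation*}
holds for all $\varphi\in C^\infty_0(\RR^n)$ with $\varphi\ge 0$; equivalently, testing against $\varphi=\chi_K$-type competitors, $[g]_{M^{1,m}}\le A$ means $\int_K g\,dx\le C A\,{\rm Cap}_{1,m}(K)$ for all compact $K$. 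The essential analytic input will be a duality/good-$\lambda$ principle identifying the Morrey class $M^{1,m}(\Om)$ with a testing condition against $A_\infty$ weights: a measure $\nu\ge 0$ lies in $M^{1,m}(\Om)$ with norm $\lesssim A$ precisely when $\int_{\Om}\mathbf{M}(\nu)^{1/(m-1)}\,w\,dx \lesssim A^{1/(m-1)}\int_{\Om}w\,dx$ uniformly over a suitable subclass of weights $w$, or rather when $\nu$ can be dominated in the appropriate dual sense.

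Concretely, first I would fix $m>1$ and set $q$ and the relevant exponent so that $\frac{q}{p-1}=\frac{m}{m-1}$, i.e.\ choose the Lorentz parameters in Theorem \ref{main} to produce $\int_\Om |\nabla u|^{m(p-1)/(m-1)} w\,dx \le C\int_\Om \mathbf{M}_1(\mu)^{m/(m-1)} w\,dx$ for every $A_\infty$ weight $w$, with $\delta$ and $s$ depending on $(n,p,\alpha,\beta,m,[w]_{A_\infty})$. The point is then that $[\mu]_{M^{1,m}(\Om)}\le A$ is equivalent, by a classical Wolff-potential / Maz'ya–Adams characterization of Morrey capacities (see \cite{AH}), to a pointwise-in-an-averaged-sense bound on $\mathbf{M}_1(\mu)$; more precisely, the Fefferman–Phong / Adams embedding gives
\begin{equation*}
\int_{K\cap\Om}\mathbf{M}_1(\mu)^{\frac{m}{m-1}}dx \le C\,[\mu]_{M^{1,m}(\Om)}^{\frac{m}{m-1}}\,{\rm Cap}_{1,m}(K)
\end{equation*}
for all compact $K$, i.e.\ $\bigl[\mathbf{M}_1(\mu)^{m/(m-1)}\bigr]_{M^{1,m}(\Om)}\le C[\mu]_{M^{1,m}(\Om)}^{m/(m-1)}$. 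Combining this with the weighted bound from Theorem \ref{main}—taking the supremum over compact sets $K$ and using that characteristic-function-type weights associated with capacitary extremals, suitably regularized, are $A_\infty$ with controlled constants—yields
\begin{equation*}
\int_{K\cap\Om}|\nabla u|^{\frac{m(p-1)}{m-1}}dx \le C\,[\mu]_{M^{1,m}(\Om)}^{\frac{m}{m-1}}\,{\rm Cap}_{1,m}(K),
\end{equation*}
which is exactly the asserted estimate.

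The main obstacle, and the step requiring care, is the transfer from the weighted $L^q_w$ inequality (valid for \emph{every} $A_\infty$ weight but with $\delta$ depending on $[w]_{A_\infty}$) to the Morrey/capacity statement: one cannot simply plug in $w=\chi_K$, since characteristic functions are not $A_\infty$. The remedy is to approximate: for a compact set $K$ with ${\rm Cap}_{1,m}(K)>0$, take a near-extremal $\varphi_K\in C^\infty_0(\RR^n)$, $\varphi_K\ge\chi_K$, with $\int(|\nabla\varphi_K|^m+\varphi_K^m)\le 2\,{\rm Cap}_{1,m}(K)$, and use the truncated maximal function or a power $[\mathbf{M}(\varphi_K^{1/\alpha})]^{\alpha\beta}$-type construction (Coifman–Rochberg) to produce an honest $A_\infty$ weight $w_K$ with $[w_K]_{A_\infty}$ bounded independently of $K$ and with $w_K\gtrsim\chi_K$ while $\int_{\Om}w_K\,dx \lesssim \int_{\RR^n}\varphi_K^{\,*}$-norm $\lesssim {\rm Cap}_{1,m}(K)$ in the relevant scale. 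Because $[w_K]_{A_\infty}$ is uniformly controlled, the constants $s,\delta$ in Theorem \ref{main} can be chosen once and for all in terms of $(n,p,\alpha,\beta,m)$, and the final constant $C_0$ then depends only on $n,p,\alpha,\beta,m,{\rm diam}(\Om)$ and ${\rm diam}(\Om)/R_0$, as claimed. Verifying the uniform $A_\infty$ bound for this family of weights and the matching lower bound $w_K\gtrsim\chi_K$ with the correct capacity normalization is where essentially all the work lies; everything else is a rearrangement of Theorem \ref{main} together with standard capacity theory from \cite{AH}.
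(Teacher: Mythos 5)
Your overall route is the same as the paper's: fix $q=\frac{m(p-1)}{m-1}$, apply Theorem \ref{main} over a family of $A_\infty$ weights whose $A_\infty$ constants are bounded uniformly (so that $s,\delta$ can be fixed in terms of $n,p,\alpha,\beta,m$), and then invoke potential theory for the class $M^{1,m}$. The paper, however, delegates the two nontrivial steps to Maz'ya--Verbitsky: the passage from the weighted inequality to $[|\nabla u|^{q}]_{M^{1,m}(\Om)}\leq C[{\rm\bf M}_1(\mu)^{\frac{m}{m-1}}]_{M^{1,m}(\Om)}$ is done via \cite[Lemma 3.1]{MV}, and the bound $[{\rm\bf M}_1(\mu)^{\frac{m}{m-1}}]_{M^{1,m}(\Om)}\leq C[\mu]_{M^{1,m}(\Om)}^{\frac{m}{m-1}}$ comes from ${\rm\bf M}_1(\mu)\leq C(n,{\rm diam}\,\Om)\,{\rm\bf G}_1*|\mu|$ on $\Om$ together with \cite[Theorems 1.1--1.2]{MV}; this truncation/Bessel comparison is where ${\rm diam}(\Om)$ enters $C_0$, a point your ``classical Adams embedding'' citation glosses over (what you need is really the Maz'ya--Verbitsky capacitary strong-type theorem, and ${\rm Cap}_{1,m}$ is the inhomogeneous capacity). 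As a side remark, your opening ``duality'' characterization $\int_\Om g\,\varphi\,dx\leq CA\int(|\nabla\varphi|^m+\varphi^m)dx$ is false as stated (the correct testing inequality carries $\varphi^m$ on the left), though you do not use it later.

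The genuine gap is in the transfer step, which you dismiss as ``a rearrangement of Theorem \ref{main}.'' After testing with your weight $w_K$ you are left with $\int_{K\cap\Om}|\nabla u|^{q}dx\leq C\int_{\Om}{\rm\bf M}_1(\mu)^{\frac{m}{m-1}}w_K\,dx$, and you must show that this is at most $C[\mu]_{M^{1,m}(\Om)}^{\frac{m}{m-1}}{\rm Cap}_{1,m}(K)$. The properties you impose on $w_K$ (namely $w_K\gtrsim\chi_K$, a uniform $A_\infty$ constant, and $\int_\Om w_K\,dx\lesssim{\rm Cap}_{1,m}(K)$) do not imply this: ${\rm\bf M}_1(\mu)^{\frac{m}{m-1}}$ is unbounded and only known to lie in the capacitary class, so a Lebesgue $L^1$ bound on the weight is useless; by the layer-cake formula what is actually required is the capacitary bound $\int_0^{\infty}{\rm Cap}_{1,m}\bigl(\{w_K>t\}\cap\overline{\Om}\bigr)\,dt\leq C\,{\rm Cap}_{1,m}(K)$, i.e.\ control of the capacities of the superlevel sets of $w_K$. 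This is precisely the content of the lemma the paper imports from \cite{MV}, and it is not automatic for the Coifman--Rochberg choice $w_K=({\rm\bf M}\chi_K)^{\gamma}$ with $\gamma<1$: the generic weak-type estimate ${\rm Cap}_{1,m}(\{{\rm\bf M}\chi_K>s\})\leq Cs^{-m}{\rm Cap}_{1,m}(K)$ (from Sobolev boundedness of ${\rm\bf M}$) makes the $t$-integral diverge, and cutting it off at ${\rm Cap}_{1,m}(\overline{\Om})$ only yields a bound of order ${\rm Cap}_{1,m}(K)^{\gamma/m}$, which is sublinear in the capacity and therefore not the asserted estimate (linearity in ${\rm Cap}_{1,m}(K)$ is exactly what the Riccati application needs for small sets). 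So the construction of uniformly $A_\infty$ weights is the easy part; the missing ingredient is the finer capacitary estimate for level sets of the maximal function (or of the capacitary potential of $K$) that \cite[Lemma 3.1]{MV} encodes, or else you should simply quote that lemma as the paper does.
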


\begin{proof}
Let $s$ and $\delta$ be as in Theorem \ref{main} corresponding to $q=\frac{m(p-1)}{m-1}>0$.  Then from Lemma 3.1 in \cite{MV} and Theorem \ref{main} we find
$$\left[|\nabla u|^\frac{m(p-1)}{m-1}\right]_{M^{1, \, m}(\Om)} \leq C \left[{\rm\bf M}_1(\mu)^\frac{m}{m-1}\right]_{M^{1,\, m}(\Om)}.$$

Note that, since $\mu$ is zero outside $\Om$, with $x\in\Om$ we have 
$${\rm\bf M}_1(\mu)(x)\leq  \sup_{0<r\leq {\rm diam}(\Om)} \frac{r\, |\mu|(B_r(x))}{|B_r(x)|}=: {\rm\bf M}^{{\rm diam}(\Om)}_1(\mu)(x).$$

On the other hand, a basic result in \cite{MV} (Theorems 1.1 and 1.2) says that if $\mu$ is a measure in $M^{1,\, m}(\Om)$, $m>1$, then $({\rm \bf G}_1 * |\mu|)^{\frac{m}{m-1}}\in M^{1,\, m}(\RR^n)$ with a bound
$$\left[({\rm\bf G}_1* |\mu|)^{\frac{m}{m-1}}\right]^{\frac{m-1}{m}}_{M^{1,\, m}(\RR^n)}\leq C [\mu]_{M^{1,\, m}(\Om)};$$
see also \cite[Corollary 2.5]{Ph1}.  Here ${\rm \bf G}_1(x)$ is the Bessel kernel of order $1$ defined via its Fourier transform by $\widehat{{\rm \bf G}_1}(\xi)=(1+|\xi|^2)^{-1/2}$. As it is well-known that  
$${\rm\bf M}^{{\rm diam}(\Om) }_1(\mu) \leq C(n, {\rm diam}(\Om))\, {\rm \bf G}_1 * |\mu|,$$ 
we immediately obtain
$$\left[{\rm\bf M}^{{\rm diam}(\Om) }_1(\mu)^{\frac{m}{m-1}}\right]^{\frac{m-1}{m}}_{M^{1,\, m}(\Om)}\leq C [\mu]_{M^{1,\, m}(\Om)},$$
where the constant $C$ may also depend on ${\rm diam}(\Om)$. From this we obtain the theorem.
\end{proof}

In what follows, by a sequence of standard mollifiers $\{\rho_k\}_{k\geq 1}$ we mean functions
$$\rho_k(x)= k^n\rho(k x), \qquad k=1, 2, \dots,$$ 
where $\rho\in C^{\infty}_{0}(\RR^n)$, $\rho\geq 0$, and $\rho$ is radially decreasing with $\norm{\rho}_{L^1}=1$. 
\begin{lemma}\label{measureapprox} Let $\{\rho_k\}_{k\geq 1}$ be a sequence of standard mollifiers. Suppose that $\mu\in M^{1,\, m}(\Om)$ with $m>1$.
Then so is $\rho_k*\mu$ with
\begin{equation}\label{Aconst}
[\rho_k*\mu]_{M^{1,\, m}(\Om)}\leq A\, [\mu]_{M^{1,\, m}(\Om)},
\end{equation}
where $A=A(n, m)$.
\end{lemma}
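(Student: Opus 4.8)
The plan is to reduce the claim to the known capacitary estimate $|\mu|(K\cap\Om)\le [\mu]_{M^{1,m}(\Om)}\,{\rm Cap}_{1,m}(K)$ together with a standard convolution-smoothing argument for Bessel capacities. Concretely, fix a compact set $K\subset\RR^n$ with ${\rm Cap}_{1,m}(K)>0$ and estimate $|\rho_k*\mu|(K\cap\Om)$. Since $\rho_k\ge 0$, for any Borel set one has $|\rho_k*\mu|\le \rho_k*|\mu|$, so
$$(\rho_k*|\mu|)(K\cap\Om)=\int_{K\cap\Om}\Big(\int_{\RR^n}\rho_k(x-y)\,d|\mu|(y)\Big)dx=\int_{\RR^n}|K\cap\Om\cap (y+\operatorname{supp}\rho_k)|\,(\rho_k*\chi_{K\cap\Om})(y)\,d|\mu|(y),$$
and the cleanest way to organize this is via Fubini: $(\rho_k*|\mu|)(K\cap\Om)=\int \check\rho_k*\chi_{K\cap\Om}\,d|\mu|$, where $\check\rho_k(x)=\rho_k(-x)=\rho_k(x)$ by radial symmetry. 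Thus it suffices to bound $\int_{\Om}(\rho_k*\chi_{K\cap\Om})\,d|\mu|$ by $A\,[\mu]_{M^{1,m}(\Om)}{\rm Cap}_{1,m}(K)$.

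The key step is then to control $\int_\Om \varphi\,d|\mu|$ for $\varphi=\rho_k*\chi_{K\cap\Om}$ using the definition of $[\mu]_{M^{1,m}(\Om)}$ applied not to $K$ itself but to a slightly enlarged compact set. Since $\operatorname{supp}\rho_k\subset B_{1/k}(0)$, the function $\varphi$ is supported in $K_{1/k}:=\{x:\operatorname{dist}(x,K)\le 1/k\}$, is smooth, satisfies $0\le\varphi\le 1$, and $\|\nabla\varphi\|_\infty=\|\rho_k*\nabla\chi\|$—more usefully, $\nabla\varphi=(\nabla\rho_k)*\chi_{K\cap\Om}$ has $L^m$ norm controlled via Young's inequality. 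The standard route is: by the strong subadditivity / layer-cake bound for measures in $M^{1,m}$ one has $\int_\Om\varphi\,d|\mu|=\int_0^1 |\mu|(\{\varphi>t\}\cap\Om)\,dt\le [\mu]_{M^{1,m}(\Om)}\int_0^1 {\rm Cap}_{1,m}(\overline{\{\varphi>t\}})\,dt$, and since $\varphi\ge t^{-1}\min\{\varphi,t\}$ is not quite admissible one instead uses that $\varphi/t\ge\chi_{\{\varphi>t\}}$ so ${\rm Cap}_{1,m}(\overline{\{\varphi\ge t\}})\le t^{-m}\|\varphi\|_{W^{1,m}}^m$ is too lossy; the correct bound is the capacitary strong-type inequality ${\rm Cap}_{1,m}(\overline{\{M\varphi>t\}})\le C t^{-m}\|\varphi\|_{W^{1,m}}^m$ replaced by the sharper Maz'ya-type estimate $\int_0^\infty {\rm Cap}_{1,m}(\overline{\{|\varphi|\ge t\}})\,d(t^m)\le C\|\varphi\|_{W^{1,m}(\RR^n)}^m$. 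Hence $\int_\Om\varphi\,d|\mu|\le [\mu]_{M^{1,m}(\Om)}\big(\int_0^1{\rm Cap}_{1,m}(\overline{\{\varphi\ge t\}})\,dt\big)$ and one bounds the bracket by $C\,{\rm Cap}_{1,m}(K_{1/k})$ using that $\varphi\le\chi_{K_{1/k}}$ gives no information on capacity directly, so instead one notes $\varphi$ itself is an admissible test function for ${\rm Cap}_{1,m}(K')$ for any $K'\subset\{\varphi=1\}$ and, more to the point, $2\varphi\ge\chi_{K}$ fails; the robust fact to invoke is the quasi-additivity ${\rm Cap}_{1,m}(K_{1/k})\le C(n,m)\,{\rm Cap}_{1,m}(K)$ valid uniformly as $k\to\infty$ because Bessel capacities are comparable to Riesz capacities at unit scale and $K_{1/k}$ is covered by a bounded-overlap family of unit dilates—this uniform comparison is exactly what produces the constant $A=A(n,m)$ independent of $k$.

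The main obstacle is precisely this last point: showing that the convolution does not blow up the capacity, i.e. that ${\rm Cap}_{1,m}$ of the $1/k$-neighborhood (or of the relevant superlevel sets of $\rho_k*\chi_{K\cap\Om}$) is bounded by a dimensional multiple of ${\rm Cap}_{1,m}(K)$ uniformly in $k$. I would handle it by the well-known equivalence, on bounded sets, between $W^{1,m}$-capacity and the Bessel capacity $C_{1,m}$, the subadditivity of $C_{1,m}$, and a covering argument: decompose $\RR^n$ into unit cubes, use that $\rho_k*\chi_{K\cap\Om}$ in each cube is controlled by $C_{1,m}$ of the part of $K$ in a slightly larger cube (a localization lemma for Bessel potentials, essentially that $G_1*\chi_{K}$ dominates $\rho_k*\chi_K$ up to constants for $k\ge1$), and sum. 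Alternatively, and more in the spirit of the paper, one can mimic the proof of Lemma~\ref{measureapprox}'s analogue for $M^{1,m}$ from \cite{MV} by passing through the potential: $\rho_k*\mu$ is dominated, via $|\rho_k*\mu|\le\rho_k*|\mu|$ and $\rho_k*(G_1*|\mu|)\le C\,G_1*|\mu|$ for $k\ge1$ (because $\rho_k*G_1\le C\,G_1$ pointwise, as $G_1$ is a positive radially decreasing function that is "almost" superharmonic and the mollification at scale $\le 1$ only affects the harmless local singularity), so that the Wolff-potential / dual characterization of $M^{1,m}$ from \cite[Theorems 1.1--1.2]{MV} gives $[\rho_k*\mu]_{M^{1,m}(\Om)}\le C\|\rho_k*G_1*|\mu|\ \text{tested against}\ \cdots\|\le C\,[\mu]_{M^{1,m}(\Om)}$ directly. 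I expect the cleanest write-up uses this potential-theoretic domination $\rho_k*G_1\le C(n)\,G_1$, which immediately yields \eqref{Aconst} with $A=A(n,m)$ after invoking the $M^{1,m}$--$G_1$-potential equivalence already cited in the proof of Theorem~\ref{mainconsequence}.
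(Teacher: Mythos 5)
Your primary route breaks down at the claim ${\rm Cap}_{1,\,m}(K_{1/k})\leq C(n,m)\,{\rm Cap}_{1,\,m}(K)$ ``uniformly as $k\to\infty$''. Since $[\rho_k*\mu]_{M^{1,\,m}(\Om)}$ is a supremum over \emph{all} compact sets, you need this comparison with a constant uniform in both $k$ and $K$, and no such estimate holds when $m\leq n$ (which covers the paper's main range $q\geq n(p-1)/(n-1)$): for fixed $k$ take $K$ a ball of radius $\epsilon\ll 1/k$, so that ${\rm Cap}_{1,\,m}(K)\approx \epsilon^{n-m}$ while ${\rm Cap}_{1,\,m}(K_{1/k})\approx k^{m-n}$ when $m<n$, and the ratio $(\epsilon k)^{m-n}$ blows up as $\epsilon\to 0$ (for $m=n$ one gets a logarithmic blow-up, and a single point already has zero capacity while its neighborhood does not). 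The covering/quasi-additivity fix you propose compares ${\rm Cap}_{1,\,m}(K_{1/k})$ with capacities of pieces of $K_{1/k}$, not of $K$, so it does not repair this; what rescues small $K$ is that $\rho_k*|\mu|$ is then small on $K$, information which the enlargement $K\mapsto K_{1/k}$ discards. The surrounding discussion of layer-cake and capacitary strong-type inequalities is also muddled, but the neighborhood comparison is the decisive gap: as written, the first argument does not prove \eqref{Aconst} with $A$ independent of $k$.

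Your closing alternative, by contrast, is correct and is essentially the paper's proof. The paper writes ${\rm\bf G}_1*(\rho_k*|\mu|)=\rho_k*({\rm\bf G}_1*|\mu|)\leq {\rm\bf M}({\rm\bf G}_1*|\mu|)$ (a nonnegative radially decreasing approximate identity is pointwise dominated by the Hardy--Littlewood maximal function), then uses Lemma 3.1 of \cite{MV} to absorb ${\rm\bf M}$, and finally Theorems 1.1--1.2 of \cite{MV} (the ${\rm\bf G}_1$-potential characterization of $M^{1,\,m}$, used in both directions) to get $A=A(n,m)$. Your variant replaces the maximal-function step by the kernel bound $\rho_k*{\rm\bf G}_1\leq C(n)\,{\rm\bf G}_1$ for $k\geq 1$; this is true, but justify it by the two-regime estimate rather than the vague ``almost superharmonic'' remark: for $|x|\leq 2$ use ${\rm\bf G}_1(x)\asymp |x|^{1-n}$ together with either $|x|\geq 2/k$ (so $|x-y|\geq |x|/2$ on ${\rm supp}\,\rho_k$) or a direct average over $B_{3/k}$ when $|x|\leq 2/k$; for $|x|\geq 2$ use the exponential decay of ${\rm\bf G}_1$ and the fact that the shift is at most $1$, which costs only a bounded factor. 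With that lemma in place, your second route gives \eqref{Aconst} exactly as the paper does; the only difference is which auxiliary fact (kernel domination versus maximal-function boundedness on the class from \cite{MV}) handles the mollification.
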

\begin{proof}
With ${\rm \bf G}_1(x)$ being the Bessel kernel of order $1$ as above, we have 
$$ {\rm \bf G}_1*(\rho_k*|\mu|)= \rho_k*{\rm \bf G}_1* |\mu| \leq {\rm\bf M}({\rm \bf G}_1*|\mu|),$$
where ${\rm\bf M}$ is the Hardy-Littlewood maximal function. By Lemma 3.1 in \cite{MV} this gives 
\begin{equation*}
\left[({\rm \bf G}_1*(\rho_k*|\mu|))^{\frac{m}{m-1}}\right]_{M^{1, \, m}(\RR^n)} 
\leq C \left[({\rm \bf G}_1*|\mu|)^{\frac{m}{m-1}}\right]_{M^{1, \, m}(\RR^n)}.
\end{equation*}

The lemma then follows from Theorem 1.1 and 1.2 in \cite{MV} (see also \cite[Theorem 2.3]{Ph1}).
\end{proof}

We are now ready for the proof of Theorem \ref{main-Ric}.

\begin{proof}[{\bf Proof of Theorem \ref{main-Ric}}]
Suppose that 
\begin{equation}\label{smallness-Ric}
[\om]_{M^{1, \frac{q}{q-p+1}}(\Om)}\leq d_0:=\frac{q-p+1}{q}\left(\frac{p-1}{C_0 q}\right)^{\frac{p-1}{q-p+1}},
\end{equation}
where $C_0$ is the constant  in Theorem \ref{mainconsequence} corresponding to $m=q/(q-p+1)$.

Let $g: [0, \infty)\rightarrow \RR$ be a function defined by
$$g(t)=C_0\left(t+[\om]_{M^{1, \frac{q}{q-p+1}}(\Om)}\right)^{\frac{q}{p-1}}-t.$$

We have $g(0)>0$ and $\lim_{t\rightarrow\infty}g(t)=\infty.$ Moreover,
$g'(t)=0$ if and only if
$$t=t_0=\left(\frac{p-1}{C_0q}\right)^{\frac{p-1}{q-p+1}}-[w]_{M^{1, \frac{q}{q-p+1}}(\Om)}.$$

It is then easy to see from \eqref{smallness-Ric} that $t_0>0$ and that $g$ has exactly one root $T$ in the interval $(0, t_0]$.

We now let
$$E=\left\{ v\in W_{0}^{1,\, 1}(\Om):v\in W_{0}^{1,\, q}(\Om) {\rm ~and~} [|\nabla v|^q]_{M^{1, \frac{q}{q-p+1}}(\Om)}
\leq T \right\}.$$

By Fatou Lemma   $E$ is  closed  under the strong topology of $W_{0}^{1,\,1}(\Om)$.
Moreover, since $q>n(p-1)/(n-1)>1$ we find that $E$ is convex.

Suppose for now that $\om\in \mathcal{M}_0(\Om)$. We then consider a map $S:E\rightarrow E$ defined for each $v\in E$ by $S(v)=u$
where $u\in W_{0}^{1,\,q}(\Om)$ is the unique solution to

\begin{eqnarray*}
\left\{\begin{array}{rcl}
-{\rm div}\, \mathcal{A}(x, \nabla u) &=&|\nabla v|^q + \om \quad {\rm in}~ \Om,\\
u&=&0\quad {\rm on}~ \partial\Om.
\end{array}
\right.
\end{eqnarray*}

The uniqueness is guaranteed here since $|\nabla v|^q + \om \in \mathcal{M}_0(\Om)$ (see \cite{BGO, DMOP}). Also, note that $S(v)\in E$ for every $v\in E$ since by Theorem \ref{mainconsequence} and the fact that $T$ is a root of $g$
we have
\begin{eqnarray*}
[|\nabla(S(v))|^q]_{M^{1, \frac{q}{q-p+1}}(\Om)}
&\leq& C_0 [|\nabla v|^q +\om]_{M^{1, \frac{q}{q-p+1}}(\Om)}^{\frac{q}{p-1}}\\
&\leq& C_0 \left( T + [\om]_{M^{1, \frac{q}{q-p+1}}(\Om)} \right)^{\frac{q}{p-1}}\\
&=& T.
\end{eqnarray*}

By Lemma \ref{cont-and-comp} below  the map $S:E\rightarrow E$ is continuous and $S(E)$ is precompact
under the strong topology of $W_{0}^{1,\, 1}(\Om)$.  Thus it has a fixed point on $E$ by  Schauder Fixed Point Theorem (see, e.g., \cite[Corollary 11.2]{GTru}). This gives a solution $u$ to the problem \eqref{Riccati} as desired.

We now remove the assumption $\om\in \mathcal{M}_0(\Om)$. To this end, we write 
$$\om=\om^+ -\om^-=(\om^+)_0 +(\om^+)_s -(\om^-)_0 -(\om^-)_s=\om_0+\om^{+}_s-\om^{-}_s,$$
where $\om_0\in \mathcal{M}_0(\Om)$ and $\om_s\in \mathcal{M}_s(\Om)$. 
Since $\om\in M^{1, \frac{q}{q-p+1}}(\Om)$ so is each measure in the above decomposition. Moreover,
$$[\om_0]_{M^{1,\, \frac{q}{q-p+1}}(\Om)}+[\om^{+}_s]_{M^{1,\, \frac{q}{q-p+1}}(\Om)}+[\om^{-}_s]_{M^{1,\, \frac{q}{q-p+1}}(\Om)}\leq [\om]_{M^{1,\, \frac{q}{q-p+1}}(\Om)}.$$

Let 
$$\lambda^+_k= \rho_k *\om^+_s,\qquad \lambda^-_k= \rho_k *\om^-_s,$$
where $\{\rho_k\}_{k\geq 1}$ is a sequence of standard mollifiers. Then by Lemma \ref{measureapprox} the measures $\om_k:=\om_0+ \lambda^+_k-\lambda^-_k\in \mathcal{M}_0(\Om)$ 
and  satisfy
\begin{eqnarray*}
[\om_k]_{M^{1,\, \frac{q}{q-p+1}}(\Om)}&\leq& [\om_0]_{M^{1,\, \frac{q}{q-p+1}}(\Om)}\\
&& +\, A\, [\om^+_s]_{M^{1,\, \frac{q}{q-p+1}}(\Om)}+A\, [\om^-_s]_{M^{1,\, \frac{q}{q-p+1}}(\Om)}\\ 
&\leq& (1+A)[\om]_{M^{1,\, \frac{q}{q-p+1}}(\Om)},
\end{eqnarray*} 
where $A$  is the constant in \eqref{Aconst} (with $m=q/(q-p+1)$).
This yields
$$[\om_k]_{M^{1,\, \frac{q}{q-p+1}}}\leq d_0,$$
provided  
$$[\om]_{M^{1,\, \frac{q}{q-p+1}}}\leq c_0:= d_0/(1+A).$$

Thus from the above argument for each $k\geq 1$ there exists a renormalized solution $u_k\in W^{1, \, q}_{0}(\Om)$ to
\begin{eqnarray*}
\left\{\begin{array}{rcl}
-{\rm div}\, \mathcal{A}(x, \nabla u_k) &=&|\nabla u_k|^q + \om_k \quad {\rm in}~ \Om,\\
u_k&=&0\quad {\rm on}~ \partial\Om,
\end{array}
\right.
\end{eqnarray*}
with 
\begin{equation}\label{ukbound}
\left[|\nabla u_k|^q\right]_{M^{1, \, \frac{q}{q-p+1}}} \leq T. 
\end{equation}

In particular, there holds 
\begin{equation}\label{rhsbound}
\int_{\Om}|\nabla u_k|^q dx +|\om_k|(\Om)\leq C.
\end{equation}

With \eqref{rhsbound} in hand, it follows from the proof of Theorem 3.4 in \cite{DMOP} that there exists a subsequence $\{u_{k'}\}$
converging a.e. to  a function $u\in W^{1,\, q}_0(\Om)$  such  that 
\begin{equation}\label{ptws1}
\nabla u_{k'} \rightarrow \nabla u \quad {\rm a.e.~ in~} \Om.
\end{equation}

Using \eqref{ukbound}, \eqref{ptws1} and arguing as in the proof of Lemma \ref{cont-and-comp} below we find that 
$$|\nabla u_{k'}|^q \rightarrow |\nabla u|^q \quad {\rm strongly~in~} L^1(\Om).$$

Thus by the stability result of renormalized solutions \cite[Theorem 3.4]{DMOP}, the function $u$ is a solution of 
\eqref{Riccati} with the desired properties. 
\end{proof}

We are now left with the following lemma whose proof follows an idea from \cite{Ph1}.  Here one also needs to use the fact that $q>1$. 
See the proof of Theorem 3.4 in \cite{Ph1} for details.

\begin{lemma}\label{cont-and-comp} The map $S:E\rightarrow E$ is continuous and its image $S(E)$ is precompact under the strong topology of $W^{1, \, 1}_{0}(\Om)$.
\end{lemma}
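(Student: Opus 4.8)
The plan is to prove continuity and compactness of $S$ simultaneously, since both rest on the same two ingredients: the a priori bound \eqref{ukbound}-type estimate on the image provided by Theorem \ref{mainconsequence}, and the stability/compactness theory for renormalized solutions from \cite{DMOP}. First I would establish precompactness of $S(E)$. Let $\{v_j\}\subset E$ and set $u_j=S(v_j)$, so that $-{\rm div}\,\mathcal{A}(x,\nabla u_j)=|\nabla v_j|^q+\om_0$ with $|\nabla v_j|^q+\om_0\in\mathcal{M}_0(\Om)$. Since $[|\nabla v_j|^q]_{M^{1,\frac{q}{q-p+1}}(\Om)}\le T$ uniformly, the measures $|\nabla v_j|^q+\om_0$ have uniformly bounded total variation, and by Theorem \ref{mainconsequence} the $u_j$ satisfy $[|\nabla u_j|^q]_{M^{1,\frac{q}{q-p+1}}(\Om)}\le T$ as well; in particular $\|\nabla u_j\|_{L^q(\Om)}$ is uniformly bounded. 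By the compactness part of \cite[Theorem 3.4]{DMOP} (the stability theorem), a subsequence $u_{j'}$ converges a.e. with $\nabla u_{j'}\to\nabla u$ a.e.\ in $\Om$ for some $u\in W_0^{1,q}(\Om)$. To upgrade a.e.\ convergence of the gradients to strong $L^1$ convergence I would use the uniform $M^{1,\frac{q}{q-p+1}}$-bound on $|\nabla u_{j'}|^q$: this bound implies the family $\{|\nabla u_{j'}|^q\}$ is equi-integrable (a measure bounded in $M^{1,m}$ with $m>1$ cannot concentrate, since sets of small Lebesgue measure have small capacity ${\rm Cap}_{1,m}$, hence small $|\nabla u_{j'}|^q$-mass), so Vitali's convergence theorem gives $\nabla u_{j'}\to\nabla u$ strongly in $L^1(\Om)$, i.e.\ $u_{j'}\to u$ strongly in $W_0^{1,1}(\Om)$. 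Fatou's lemma then gives $[|\nabla u|^q]_{M^{1,\frac{q}{q-p+1}}(\Om)}\le T$, so $u\in E$ and $S(E)$ is precompact.

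Next I would prove continuity. Suppose $v_j\to v$ strongly in $W_0^{1,1}(\Om)$ with $v_j,v\in E$; I must show $S(v_j)\to S(v)$ in $W_0^{1,1}(\Om)$. Passing to a subsequence, $\nabla v_j\to\nabla v$ a.e., and by the equi-integrability of $\{|\nabla v_j|^q\}$ (again from the uniform $M^{1,\frac{q}{q-p+1}}$ bound, exactly as above) Vitali's theorem gives $|\nabla v_j|^q\to|\nabla v|^q$ strongly in $L^1(\Om)$. Hence the data $|\nabla v_j|^q+\om_0$ converge to $|\nabla v|^q+\om_0$ strongly in $L^1(\Om)$, a fortiori in the narrow topology, with the singular parts all zero. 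By the stability theorem \cite[Theorem 3.4]{DMOP}, the renormalized solutions $u_j=S(v_j)$ converge (along a subsequence) a.e.\ with $\nabla u_j\to\nabla w$ a.e.\ to a renormalized solution $w$ of $-{\rm div}\,\mathcal{A}(x,\nabla w)=|\nabla v|^q+\om_0$; by uniqueness of renormalized solutions with $\mathcal{M}_0$-data (\cite{BGO, DMOP}), $w=S(v)$. The uniform $M^{1,\frac{q}{q-p+1}}$-bound on $\{|\nabla u_j|^q\}$ furnished by Theorem \ref{mainconsequence} again yields equi-integrability, so $\nabla u_j\to\nabla S(v)$ strongly in $L^1(\Om)$. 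Since every subsequence has a further subsequence converging to the same limit $S(v)$, the full sequence converges, proving continuity.

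The main obstacle — really the only nonroutine point — is the passage from a.e.\ convergence of the gradients to strong $L^1$ convergence, i.e.\ ruling out loss of mass. This is precisely where the hypothesis $q>1$ (equivalently $\frac{q}{q-p+1}>1$, using $q>p-1$) enters: it makes the relevant Sobolev capacity ${\rm Cap}_{1,\frac{q}{q-p+1}}$ nontrivial and comparable, on small sets, to Lebesgue measure in the direction needed for equi-integrability, so that a uniform bound in $M^{1,\frac{q}{q-p+1}}(\Om)$ forbids concentration. I would carry this out by showing directly that for each $\eta>0$ there is $\kappa>0$ such that $|F|<\kappa\Rightarrow\int_F|\nabla u_j|^q\,dx<\eta$ for all $j$, splitting $F$ into a part where $|\nabla u_j|$ is large (controlled by the uniform $L^q$ bound via Chebyshev) and using the Morrey-type capacitary bound on the remainder; this is exactly the argument referenced from \cite[proof of Theorem 3.4]{Ph1}, and I would invoke it rather than reproduce it in full. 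Everything else is a mechanical application of Theorem \ref{mainconsequence}, the stability theorem of \cite{DMOP}, and uniqueness for $\mathcal{M}_0$-data.
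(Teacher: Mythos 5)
Your precompactness half is sound (and can even be simplified), but the continuity half has a genuine gap, and it sits exactly at the step you yourself single out. The implication you rely on --- that a uniform bound in $M^{1,\,m}(\Om)$, $m=\frac{q}{q-p+1}>1$, forces equi-integrability because ``sets of small Lebesgue measure have small capacity ${\rm Cap}_{1,\,m}$'' --- is false: Lebesgue measure does not control ${\rm Cap}_{1,\,m}$ (a thin slab around a piece of hyperplane has arbitrarily small measure but capacity bounded below), and the class $M^{1,\,m}$ with $m>1$ is precisely designed to admit surface-type measures. Indeed, if $\sigma$ is $(n-1)$-dimensional surface measure on a hyperplane piece inside $\Om$, then $\sigma(B_r)\leq C r^{n-1}=C r^{(n-m)+(m-1)}$, so by the Fefferman--Phong/Adams sufficiency criterion $\sigma\in M^{1,\,m}(\Om)$ for every $m>1$; its mollifications (cf.\ Lemma \ref{measureapprox}) are bounded in $M^{1,\,m}$, converge to $0$ a.e., yet retain unit mass, so they are not equi-integrable. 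The same phenomenon occurs for gradients inside $E$: with $\psi\in C_0^\infty$, $\eta\in C_0^\infty(\RR)$ and $v_k(x)=c_0\,k^{-1+1/q}\psi(x')\eta(kx_n)$ (suitably placed in $\Om$), one checks via the same criterion that $v_k\in E$ for small $c_0$, that $v_k\to 0$ strongly in $W_0^{1,\,1}(\Om)$ (this uses $q>1$), while $|\nabla v_k|^q\,dx$ converges weak-$*$ to a nonzero multiple of surface measure rather than to $0$ in $L^1$. Hence your route to continuity --- $|\nabla v_j|^q\to|\nabla v|^q$ strongly (or weakly) in $L^1$, fed into the stability theorem of \cite{DMOP} --- cannot be extracted from the $M^{1,\,m}$ bound plus a.e.\ convergence alone, and the Chebyshev splitting does not repair it: a uniform bound on $\int_{\{|\nabla u_j|>\lambda\}}|\nabla u_j|^q$ is equi-integrability itself, not a consequence of the uniform $L^q$ bound.

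For precompactness you do not need equi-integrability of $|\nabla u_j|^q$ at all: once $\nabla u_{j'}\to\nabla u$ a.e.\ (from the measure-data compactness machinery, as in the proof of \cite[Theorem 3.4]{DMOP}), the uniform $L^q(\Om)$ bound on $\nabla u_j$ with $q>1$ gives equi-integrability of $|\nabla u_{j'}|$ (first power) by H\"older, and Vitali plus Fatou give strong $W_0^{1,\,1}$ convergence with limit in $E$; this is precisely the place where the paper's remark that ``$q>1$ is also needed'' enters. The paper itself does not reproduce a proof of the lemma but defers it to the proof of Theorem 3.4 in \cite{Ph1}, and whatever argument is used there for continuity (and for the strong $L^1$ convergence of $|\nabla u_{k'}|^q$ invoked in the proof of Theorem \ref{main-Ric}) must use more than the two facts your proposal uses, since the example above shows those two facts do not exclude concentration. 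As written, your proposal does not supply that missing ingredient, so the continuity assertion remains unproved.
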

\noindent{\bf Acknowledgments.} The author wishes to thank Professor Igor E. Verbitsky his helpful comments made on the Riccati type equation.

\end{document}